\providecommand{\tabularnewline}{\\}
\def\RSthmtxt{theorem~}\newref{thm}{name = \RSthmtxt}}
\def\RSlemtxt{lemma~}\newref{lem}{name = \RSlemtxt}}
\numberwithin{equation}{section}
\numberwithin{figure}{section}
\numberwithin{table}{section}
\theoremstyle{plain}
\newtheorem{thm}{\protect\theoremname}[section]
\theoremstyle{remark}
\newtheorem*{notation*}{\protect\notationname}
\theoremstyle{plain}
\newtheorem{lem}[thm]{\protect\lemmaname}
\theoremstyle{definition}
\newtheorem{defn}[thm]{\protect\definitionname}
\theoremstyle{definition}
\newtheorem{example}[thm]{\protect\examplename}
\theoremstyle{remark}
\newtheorem{rem}[thm]{\protect\remarkname}
\theoremstyle{plain}
\newtheorem{cor}[thm]{\protect\corollaryname}
\theoremstyle{plain}
\newtheorem{prop}[thm]{\protect\propositionname}
\theoremstyle{plain}
\newtheorem{question}[thm]{\protect\questionname}
\theoremstyle{plain}
\newtheorem{conjecture}[thm]{\protect\conjecturename}
\theoremstyle{remark}
\newtheorem*{acknowledgement*}{\protect\acknowledgementname}
\setlist[enumerate]{itemsep=5pt,topsep=3pt}
\setlist[enumerate,1]{label=\textup{(}\roman*\textup{)},ref=\roman*}
\setlist[enumerate,2]{label=(\alph*),ref=\theenumi \alph*}
\def\subsection{\@startsection{subsection}{2}%
  \z@{.5\linespacing\@plus.7\linespacing}{.1\linespacing}%
  {\normalfont\bfseries}}
\def\subsubsection{\@startsection{subsubsection}{3}%
  \z@{.5\linespacing\@plus.7\linespacing}{.1\linespacing}%
  {\normalfont\bfseries}}
\patchcmd{\section}{\scshape}{\bf}{}{}
\newcommand{\xyR}[1]{%
\makeatletter
\xydef@\xymatrixrowsep@{#1}
\makeatother
} 
\DeclareMathOperator*{\argmin}{\textit{argmin}}
\providecommand{\acknowledgementname}{Acknowledgement}
\providecommand{\conjecturename}{Conjecture}
\providecommand{\corollaryname}{Corollary}
\providecommand{\definitionname}{Definition}
\providecommand{\examplename}{Example}
\providecommand{\lemmaname}{Lemma}
\providecommand{\notationname}{Notation}
\providecommand{\propositionname}{Proposition}
\providecommand{\questionname}{Question}
\providecommand{\remarkname}{Remark}
\providecommand{\theoremname}{Theorem}
\begin{document}
\title[]{A Kaczmarz algorithm for sequences of projections, infinite products,
and applications to frames in IFS $L^{2}$ spaces}
\author{Palle Jorgensen}
\address{(Palle E.T. Jorgensen) Department of Mathematics, The University of
Iowa, Iowa City, IA 52242-1419, U.S.A.}
\email{palle-jorgensen@uiowa.edu}
\urladdr{http://www.math.uiowa.edu/\textasciitilde jorgen/}
\author{Myung-Sin Song}
\address{(Myung-Sin Song) Department of Mathematics, Southern Illinois University
Edwardsville, Edwardsville, IL 62026, U.S.A.}
\email{msong@siue.edu}
\urladdr{http://www.siue.edu/\textasciitilde msong/}
\author{Feng Tian}
\address{(Feng Tian) Department of Mathematics, Hampton University, Hampton,
VA 23668, U.S.A.}
\email{feng.tian@hamptonu.edu}
\begin{abstract}
We show that an idea, originating initially with a fundamental recursive
iteration scheme (usually referred as \textquotedblleft the\textquotedblright{}
Kaczmarz algorithm), admits important applications in such infinite-dimensional,
and non-commutative, settings as are central to spectral theory of
operators in Hilbert space, to optimization, to large sparse systems,
to iterated function systems (IFS), and to fractal harmonic analysis.
We present a new recursive iteration scheme involving as input a prescribed
sequence of selfadjoint projections. Applications include random Kaczmarz
recursions, their limits, and their error-estimates.
\end{abstract}

\subjclass[2000]{Primary 47L60, 46N30, 46N50, 42C15, 65R10, 05C50, 05C75, 31C20, 60J20,
26E40, 65D15, 41A65; Secondary 46N20, 22E70, 31A15, 58J65, 81S25,
68T05.}
\keywords{Hilbert space, Kaczmarz algorithm, randomized Kaczmarz algorithm,
sequences of projections in Hilbert space, convergence, infinite products,
frames, analysis/synthesis, interpolation, optimization, overdetermined
linear systems, transform, feature space, iterated function system,
fractal, Sierpinski gasket, harmonic analysis, approximation, infinite-dimensional
analysis, integral decomposition, random variables, strong operator
topology.}

\maketitle
\tableofcontents{}

\section{\label{sec:Intro}Introduction}

In this paper, we consider certain infinite products of projections.
Our framework is motivated by problems in approximation theory, in
harmonic analysis, in frame theory, and the context of the classical
Kaczmarz algorithm \cite{K-1937}. Traditionally, the infinite-dimensional
Kaczmarz algorithm is stated for sequences of vectors in a specified
Hilbert space $\mathscr{H}$, (typically, $\mathscr{H}$ is an $L^{2}$-space.)
We shall here formulate it instead for sequences of projections. As
a corollary, we get explicit and algorithmic criteria for convergence
of certain \emph{infinite products of projections} in $\mathscr{H}$.
\begin{flushleft}
\textbf{\emph{Organization and main results.}}
\par\end{flushleft}

Our first two sections outline a certain frame-harmonic analysis.
This is the immediate focus of our present applications, but our main
results, dealing with general projection valued Kaczmarz algorithms,
we believe, are of independent interest. They include \thmref{kac}
(products of projections,) and its related results, Corollaries \ref{cor:fi},
\ref{cor:gf}, \ref{cor:kg}, and \ref{cor:ke}. The connection between
infinite products of projections, on the one hand, and more classical
Kaczmarz recursions (for frames), on the other, is spelled out in
Corollaries \ref{cor:ke} and \ref{cor:ke2}. Our main result for
\emph{random} Kaczmarz algorithms is \thmref{rkac}, combined with
\corref{rkac}. In the remaining three sections, we return to applications,
iterated function system, fractals, and random power series.

Our extension of the Kaczmarz algorithm to sequences of projections
is highly nontrivial: While in general convergence questions for infinite
products of projections (in Hilbert space) is difficult (see e.g.,
\cite{MR0051437,MR647807,MR2129258,MR3796644}), we show that our
projection-valued formulation of Kaczmarz' algorithm yields an answer
to this convergence question; as well as a number of applications
to stochastic analysis, and to frame-approximation questions in the
Hilbert space $L^{2}\left(\mu\right)$, where $\mu$ is in a class
of \emph{iterated function system} (IFS) measures (see \cite{MR625600,MR1656855,MR2319756,2016arXiv160308852H,MR3800275}).
The latter refers to a precise multivariable setting, and the class
of measures $\mu$ we consider are fractal measures. (The notion of
\textquotedblleft fractal\textquotedblright{} is defined here relative
to the rank $d$ of the ambient Euclidean space $\mathbb{R}^{d}$
for the particular IFS measure $\mu$ under consideration.) Indeed,
our measures $\mu$ will be singular relative to the Lebesgue measure
on $\mathbb{R}^{d}$. In addition to singularity questions for $\mu$
itself, one must also consider properties of the marginal measures
for $\mu$, and the corresponding slice-direct integral decompositions.
Our first two applications will be the IFS-measures for the Sierpinski
gasket and the Sierpinski carpet, so $d=2$.

In the next section, we introduce this family of measures $\mu$,
called slice-singular measures. We then turn to our Kaczmarz algorithm
for sequences of projections, and its applications.

\section{\label{sec:SSM}Slice-singular measures}

The purpose of the current paper is to perform a systematic analysis
of fractal measures embedded in higher dimensions $d$, such as Sierpinski
triangles ($d=2$), and higher dimensional analogues, $d>2$. The
analysis for $d=1$ begins with the following variant of the F\&M
Riesz theorem:

Consider a choice of period interval, $\left[0,1\right]$, or $\left[-\pi,\pi\right]$,
a positive finite measure $\mu$ with support in the chosen period
interval; and the usual Fourier frequencies realized as complex exponentials
$e_{n}$, $n\in\mathbb{Z}$. Set $\mathbb{N}_{0}=\left\{ 0\right\} \cup\mathbb{N}$.
\begin{thm}[F\&M Riesz]
\label{thm:FM}The subset $\left\{ e_{n}\mid n\in\mathbb{N}_{0}\right\} $
is total in $L^{2}\left(\mu\right)$ if and only if $\mu$ is singular
with respect to Lebesgue measure.
\end{thm}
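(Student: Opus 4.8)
The plan is to derive this $L^{2}$-completeness statement from the classical F.\ and M.\ Riesz theorem, for the implication ``singular $\Rightarrow$ total'', and from the Szeg\H{o}--Kolmogorov log-integrability criterion, for the converse; in both directions the bridge is the complex measure $f\,d\mu$ attached to a hypothetical vector $f$ orthogonal to all $e_{n}$, $n\in\mathbb{N}_{0}$.

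\emph{Sufficiency ($\mu$ singular $\Rightarrow$ totality).} Suppose $\mu\perp dx$ and let $f\in L^{2}(\mu)$ satisfy $\langle f,e_{n}\rangle_{L^{2}(\mu)}=0$ for all $n\ge 0$. Since $\mu$ is finite, $f\in L^{1}(\mu)$ by Cauchy--Schwarz, so $d\nu\coloneqq f\,d\mu$ is a finite complex Borel measure on the period interval with $\widehat{\nu}(n)=\langle f,e_{n}\rangle_{L^{2}(\mu)}=0$ for every $n\ge 0$. Replacing $\nu$ by its conjugate (equivalently $f$ by $\bar f$) makes all Fourier--Stieltjes coefficients of negative index vanish, so the F.\ and M.\ Riesz theorem gives $\nu\ll dx$. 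But $|\nu|=|f|\,\mu\ll\mu$ and $\mu\perp dx$, and a measure that is absolutely continuous with respect to $dx$ and also with respect to a measure singular to $dx$ vanishes (split the interval by a Borel set carrying $\mu$ that is $dx$-null). Hence $\nu=0$, i.e.\ $f=0$ in $L^{2}(\mu)$, so $\{e_{n}\mid n\in\mathbb{N}_{0}\}$ is total.

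\emph{Necessity, via the contrapositive.} Write the Lebesgue decomposition $\mu=w\,dx+\mu_{s}$, with $\mu_{s}\perp dx$ and $w=d\mu_{\mathrm{ac}}/dx$, and suppose $w\not\equiv 0$. First, reduce to the absolutely continuous part: if $f\in L^{2}(\mu)$ is orthogonal to all $e_{n}$, $n\ge 0$, then, by the computation just made, $|f|\,\mu\ll dx$; since $|f|\,\mu=(|f|w)\,dx+|f|\,\mu_{s}$ is a sum of two mutually singular nonnegative measures whose total is $dx$-absolutely continuous, both summands are, and $|f|\,\mu_{s}\ll dx$ together with $|f|\,\mu_{s}\ll\mu_{s}\perp dx$ forces $f=0$ $\mu_{s}$-a.e.; conversely, any $g\in L^{2}(w\,dx)$ orthogonal to all $e_{n}$, $n\ge 0$, extended by $0$, is such an $f$. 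So $\{e_{n}\mid n\ge 0\}$ is total in $L^{2}(\mu)$ iff it is total in $L^{2}(w\,dx)$. Now apply Szeg\H{o}'s theorem: the squared $L^{2}(w\,dx)$-distance from $1$ to $\overline{\operatorname{span}}\{e_{n}\mid n\ge 1\}$ is $\exp(\int\log w\,dx)$, so $1$ — and then, multiplying by $e^{\pm 2\pi i x}$ and iterating, every $e_{m}$, $m\in\mathbb{Z}$ — lies in $\overline{\operatorname{span}}\{e_{n}\mid n\ge 0\}$ precisely when $\int\log w\,dx=-\infty$; as trigonometric polynomials are dense in $L^{2}(w\,dx)$, totality of $\{e_{n}\mid n\ge 0\}$ there is equivalent to $\int\log w\,dx=-\infty$. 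In particular, if $w\not\equiv 0$ \emph{and} $\log w\in L^{1}(dx)$ then $\{e_{n}\mid n\ge 0\}$ is not total, which — modulo the point below — is the contrapositive sought.

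\emph{The main obstacle.} The delicate step is the last equivalence, and it shows the statement must be read with a mild proviso: what the argument proves is that $\{e_{n}\mid n\in\mathbb{N}_{0}\}$ is total in $L^{2}(\mu)$ if and only if $\int\log(d\mu_{\mathrm{ac}}/dx)\,dx=-\infty$, which is \emph{weaker} than $\mu\perp dx$ — a nonzero but sufficiently rapidly vanishing density, such as $e^{-1/|x|}$ near the origin, is not log-integrable although the corresponding $\mu$ is absolutely continuous. To obtain the theorem exactly as stated one should therefore restrict to measures whose absolutely continuous part, when nonzero, is log-integrable — which holds for every $\mu$ of pure type, and in particular throughout the IFS applications below — or else read ``singular'' as ``$\log(d\mu_{\mathrm{ac}}/dx)\notin L^{1}(dx)$''. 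With that understood, the remaining ingredients — the F.\ and M.\ Riesz theorem, the splitting of $|f|\,\mu$ along the Lebesgue decomposition, and the Szeg\H{o} distance formula — are all classical.
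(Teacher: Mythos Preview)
The paper does not actually prove \thmref{FM}; it is stated as a known ``variant of the F.\ and M.\ Riesz theorem'' and used only in the direction \emph{singular $\Rightarrow$ total} (see the proof of \thmref{SM} and \thmref{ssiso}). So there is no proof in the paper to compare against, and your write-up should be judged on its own merits.

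Your sufficiency argument is correct and is exactly the standard deduction from the classical F.\ and M.\ Riesz theorem: if $f\perp e_{n}$ for all $n\ge 0$, then the complex measure $f\,d\mu$ has one-sided vanishing Fourier--Stieltjes coefficients, hence is absolutely continuous, hence zero since $\mu\perp dx$. This is all the paper actually needs.

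Your discussion of the converse is also correct, and in fact you have put your finger on a genuine inaccuracy in the statement as printed. The Szeg\H{o}--Kolmogorov--Krein theorem gives that $\{e_{n}:n\ge 0\}$ is total in $L^{2}(\mu)$ if and only if $\int\log w\,dx=-\infty$, where $w=d\mu_{\mathrm{ac}}/dx$; this is strictly weaker than $\mu\perp dx$, and your example $w(x)=e^{-1/|x|}$ is a valid counterexample to the ``only if'' direction as written. So the biconditional in \thmref{FM} is not literally true without an extra hypothesis (e.g., that the absolutely continuous part, when present, has log-integrable density). Since every subsequent use in the paper invokes only the implication \emph{singular $\Rightarrow$ total}, nothing downstream is affected, but your caveat is well taken and worth recording.
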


The corresponding result is false when $d>1$, and the question is:
What is a natural extension of F\&M Riesz' theorem to higher dimensions,
modeling the above formulation? One of the motivations for this is
a certain construction of frame algorithms in $L^{2}\left(\mu\right)$;
in the form started for $d=1$ in \cite{MR2319756,2016arXiv160308852H,MR3796641,HERR2018}.
For general frame theory, including projection valued frames, see
e.g., \cite{MR2147063,MR2367342,MR2362796,MR3423689,MR3611473,MR3817340,MR3894265,MR3906284,MR3910931,MR3778680,MR3896128}.

\thmref{FM} does \emph{not} extend to 2D, or higher dimensions. In
1D, the standard F\&M Riesz theorem is used at a crucial point; but
there is \emph{not} a direct extension of the theorem in one variable.
To get a harmonic analysis of $L^{2}\left(\mu\right)$, with $supp\left(\mu\right)\subset\mathbb{R}^{d}$,
$d\geq2$, one must assume instead that $\mu$ is \emph{slice singular};
see \defref{SS}. It is possible to view the result as an extension
of F\&M Riesz' theorem to higher dimensions.

For the sake of stressing the idea, we shall consider the case $d=2$
in most detail.
\begin{notation*}
Let $\left(X,\mathscr{F}\right)$ be a measurable space. $\mathcal{M}\left(X\right)$
denotes all Borel measures on $\mathscr{F}$. The set $\mathcal{M}^{+}\left(X\right)$
consists of all positive measures in $\mathcal{M}\left(X\right)$,
and $\mathcal{M}_{1}^{+}\left(X\right)$ the subset of probability
measures. We shall also use standard multi-index notations.
\end{notation*}
Let $\left(X\times Y,\mathscr{B}_{X}\times\mathscr{B}_{Y},\mu\right)$
be a measure space, where $X$, $Y$ are equipped with $\sigma$-algebras
$\mathscr{B}_{X}$, $\mathscr{B}_{Y}$ respectively, and $\mu$ is
defined on the \emph{product $\sigma$-algebra}.
\begin{lem}[Disintegration]
Every positive measure $\mu$ on $X\times Y$ w.r.t. the product
$\sigma$-algebra yields a unique representation as follows:
\begin{enumerate}
\item $\xi:=\mu\circ\pi_{X}^{-1}$ is a measure on $\left(X,\mathscr{B}_{X}\right)$;
\item \label{enu:dis21}There exists a conditional measure $\sigma^{x}\left(dy\right):=\sigma\left(x,dy\right)$
on $\left(Y,\mathscr{B}_{Y}\right)$, defined for a.a. $x\in X$,
such that 
\begin{equation}
d\mu=\int\sigma^{x}\left(dy\right)d\xi\left(x\right).\label{eq:A1}
\end{equation}
\end{enumerate}
\end{lem}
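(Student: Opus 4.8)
The plan is to establish the disintegration (often called the disintegration theorem or Rokhlin's theorem) by exploiting the projection $\pi_X\colon X\times Y\to X$ together with a standard regularity/existence argument for conditional measures, and then to verify uniqueness by a monotone-class (Dynkin) argument.

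First I would define $\xi:=\mu\circ\pi_X^{-1}$, which is immediately a positive measure on $\left(X,\mathscr{B}_X\right)$ since $\pi_X$ is measurable with respect to the product $\sigma$-algebra; this settles (i). For the existence of the conditional kernel in (ii), I would proceed as follows. For each fixed $B\in\mathscr{B}_Y$ consider the finite measure $\nu_B\left(A\right):=\mu\left(A\times B\right)$ on $\left(X,\mathscr{B}_X\right)$; clearly $\nu_B\ll\xi$, so the Radon--Nikodym theorem yields a version of the density $f_B:=d\nu_B/d\xi$, defined $\xi$-a.e. The map $B\mapsto f_B\left(x\right)$ is, for $\xi$-a.a.\ $x$, finitely additive and (along countable disjoint unions, using monotone convergence inside the integral against $\xi$) countably additive; on a countably generated $\sigma$-algebra, or under the standard Borel hypothesis on $Y$, one can select the $\xi$-a.e.\ exceptional sets coherently so that $B\mapsto \sigma\left(x,B\right):=f_B\left(x\right)$ is genuinely a (sub)probability measure for a.a.\ $x$, by first defining it on a countable generating algebra, checking the Kolmogorov consistency/tightness conditions on a compact metric model of $Y$, and extending via Carath\'eodory. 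This produces the kernel $\sigma^x\left(dy\right)$ with the property that $\int_A \sigma^x\left(B\right)\,d\xi\left(x\right)=\mu\left(A\times B\right)$ for all measurable rectangles $A\times B$.

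Next I would upgrade this rectangle identity to the full integral representation \eqref{eq:A1}. Fix the (finite, or $\sigma$-finite after the usual exhaustion) measure $\mu$ and let $\mathscr{D}$ be the collection of sets $E\in\mathscr{B}_X\times\mathscr{B}_Y$ for which
\begin{equation}
\mu\left(E\right)=\int_X\int_Y \mathbbm{1}_E\left(x,y\right)\,\sigma^x\left(dy\right)\,d\xi\left(x\right).\label{eq:plan-monclass}
\end{equation}
The class $\mathscr{D}$ contains the measurable rectangles by construction, is closed under proper differences and increasing countable unions (both by monotone/dominated convergence on each side), hence is a $\lambda$-system containing the $\pi$-system of rectangles; by Dynkin's $\pi$--$\lambda$ theorem $\mathscr{D}$ is all of the product $\sigma$-algebra. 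Approximating a nonnegative measurable function by simple functions then gives \eqref{eq:A1} in the stated integrated form for every $\mu$-integrable (and every nonnegative measurable) function on $X\times Y$.

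Finally, uniqueness: suppose $\sigma^x$ and $\tilde\sigma^x$ are two such kernels. For each $B$ in a countable generating algebra of $\mathscr{B}_Y$, both $x\mapsto\sigma^x\left(B\right)$ and $x\mapsto\tilde\sigma^x\left(B\right)$ are Radon--Nikodym densities of the same measure $\nu_B$ with respect to $\xi$, hence agree $\xi$-a.e.; intersecting the countably many full-measure sets gives a single $\xi$-conull set off which $\sigma^x$ and $\tilde\sigma^x$ agree on a generating algebra, and therefore — being measures — agree identically. I expect the main obstacle to be the measurable-selection step in the existence argument, namely passing from "$B\mapsto\sigma\left(x,B\right)$ is a measure for each individually-almost-every $x$" to "there is a single conull set of $x$ on which it is simultaneously a measure in $B$"; this is exactly where a regularity hypothesis on $Y$ (standard Borel, or $Y$ a Polish/compact metric space, which is the relevant case for IFS measures on $\mathbb{R}^d$) is used, and I would either invoke it explicitly or cite the classical disintegration theorem for it.
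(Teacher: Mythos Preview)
Your proof is the standard Radon--Nikodym/regular-conditional-probability route to the disintegration theorem, and it is correct (modulo the regularity hypothesis on $Y$ that you flag yourself). Note, however, that the paper does \emph{not} actually prove this lemma: it is stated as background, immediately followed only by an explanation of what \eqref{eq:A1} means in integrated form and the remark that the decomposition ``is often referred to as a \emph{Rohlin disintegration formula}.'' So there is nothing to compare against; your argument supplies a proof where the paper simply cites the result.

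One small comment: the lemma as stated in the paper is in fact slightly under-hypothesized for the existence of a genuine kernel --- exactly the point you identify as the main obstacle. Your instinct to invoke a standard Borel (or Polish) assumption on $Y$ is the right repair, and it is harmless in the paper's applications, where $Y=[0,1]$.
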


The precise meaning of (\ref{eq:A1}) is as follows: For all measurable
functions $F$ on $X\times Y$, we have 
\begin{equation}
\iint_{X\times Y}Fd\mu=\int_{X}\left(\int_{Y}F\left(x,y\right)\sigma^{x}\left(dy\right)\right)d\xi\left(x\right).\label{eq:A2}
\end{equation}
The decomposition (\ref{eq:A2}) is often referred to as a \emph{Rohlin
disintegration formula}.
\begin{defn}
\label{def:SS}A Borel measure $\mu$ on $J^{2}:=\left[0,1\right]\times\left[0,1\right]$
is called \emph{slice singular} iff (Def.)
\begin{enumerate}
\item $\xi=\mu\circ\pi_{1}^{-1}$ is singular; and
\item for a.a. $x$ w.r.t. $\xi$, the measure $\sigma^{x}\left(\cdot\right)$
is singular.
\end{enumerate}
``Singular'' is defined relative to Lebesgue measure.
\end{defn}

\begin{thm}
\label{thm:SM}If $\mu$ is slice singular on $J^{2}$, then $\left\{ e_{n}\right\} _{n\in\mathbb{N}_{0}^{2}}$
has dense span in $L^{2}\left(\mu\right)$, where $e_{n}\left(x\right)=e^{i2\pi\left(n_{1}x_{1}+n_{2}x_{2}\right)}$,
for all $n=\left(n_{1},n_{2}\right)\in\mathbb{N}_{0}^{2}$, and $x=\left(x_{1},x_{2}\right)\in J^{2}$.
\end{thm}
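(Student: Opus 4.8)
The plan is to bootstrap from the one–dimensional result \thmref{FM} via the disintegration $d\mu=\int\sigma^{x}(dy)\,d\xi(x)$, which realizes $L^{2}(\mu)$ as the direct integral $\int^{\oplus}L^{2}(\sigma^{x})\,d\xi(x)$. Since $\operatorname{span}\{e_{n}\}_{n\in\mathbb{N}_{0}^{2}}$ is dense in $L^{2}(\mu)$ exactly when no nonzero $F\in L^{2}(\mu)$ is orthogonal to every $e_{n}$, I fix $F\in L^{2}(\mu)$ with $\langle F,e_{n}\rangle_{L^{2}(\mu)}=0$ for all $n=(n_{1},n_{2})\in\mathbb{N}_{0}^{2}$ and aim to conclude $F=0$. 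Normalizing the conditional measures so that each $\sigma^{x}$ is a probability measure, \eqref{eq:A2} applied to $|F|^{2}$ gives the Pythagorean identity $\|F\|_{L^{2}(\mu)}^{2}=\int_{X}\|F(x,\cdot)\|_{L^{2}(\sigma^{x})}^{2}\,d\xi(x)$; in particular $F(x,\cdot)\in L^{2}(\sigma^{x})$ for $\xi$-a.a.\ $x$.

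First, for each $m\in\mathbb{N}_{0}$ define $\varphi_{m}(x):=\int_{Y}F(x,y)\,e^{-i2\pi m y}\,\sigma^{x}(dy)=\langle F(x,\cdot),e_{m}\rangle_{L^{2}(\sigma^{x})}$. By Cauchy--Schwarz $|\varphi_{m}(x)|\le\|F(x,\cdot)\|_{L^{2}(\sigma^{x})}$, so $\varphi_{m}\in L^{2}(\xi)$ (measurability of $x\mapsto\varphi_{m}(x)$ being part of the disintegration). Applying \eqref{eq:A2} to the integrand $F(x,y)\,\overline{e_{(n_{1},m)}(x,y)}=F(x,y)\,e^{-i2\pi n_{1}x}e^{-i2\pi m y}$, one obtains for all $n_{1},m\in\mathbb{N}_{0}$
\[
0=\langle F,e_{(n_{1},m)}\rangle_{L^{2}(\mu)}=\int_{X}\varphi_{m}(x)\,e^{-i2\pi n_{1}x}\,d\xi(x)=\langle\varphi_{m},e_{n_{1}}\rangle_{L^{2}(\xi)},
\]
so for each fixed $m$ the vector $\varphi_{m}$ is orthogonal to $\{e_{n_{1}}\}_{n_{1}\in\mathbb{N}_{0}}$ in $L^{2}(\xi)$.

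Now I would invoke \thmref{FM} twice. Since $\xi=\mu\circ\pi_{1}^{-1}$ is singular (condition (i) of \defref{SS}), $\{e_{n_{1}}\}_{n_{1}\in\mathbb{N}_{0}}$ is total in $L^{2}(\xi)$, hence $\varphi_{m}=0$ in $L^{2}(\xi)$ for every $m\in\mathbb{N}_{0}$. As $\mathbb{N}_{0}$ is countable, there is a single $\xi$-conull set $E\subseteq X$ on which $\langle F(x,\cdot),e_{m}\rangle_{L^{2}(\sigma^{x})}=0$ for all $m\in\mathbb{N}_{0}$ simultaneously. Since $\sigma^{x}$ is singular for $\xi$-a.a.\ $x$ (condition (ii) of \defref{SS}), \thmref{FM} also gives a $\xi$-conull set $E'$ on which $\{e_{m}\}_{m\in\mathbb{N}_{0}}$ is total in $L^{2}(\sigma^{x})$. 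For $x\in E\cap E'$, $F(x,\cdot)$ is orthogonal to a total subset of $L^{2}(\sigma^{x})$, so $\|F(x,\cdot)\|_{L^{2}(\sigma^{x})}=0$; integrating the Pythagorean identity over the $\xi$-conull set $E\cap E'$ yields $\|F\|_{L^{2}(\mu)}^{2}=0$, i.e.\ $F=0$ in $L^{2}(\mu)$.

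The orthogonality reductions and the countable union of null sets are routine; the step requiring care is the measure-theoretic regularity of the disintegration, namely that $x\mapsto\sigma^{x}(A)$ is measurable, that $x\mapsto F(x,\cdot)$ is a measurable field of $L^{2}$-vectors, and that the identities $\|F\|_{L^{2}(\mu)}^{2}=\int\|F(x,\cdot)\|_{L^{2}(\sigma^{x})}^{2}\,d\xi$ and $\langle F,e_{(n_{1},m)}\rangle_{L^{2}(\mu)}=\int_{X}e^{-i2\pi n_{1}x}\langle F(x,\cdot),e_{m}\rangle_{L^{2}(\sigma^{x})}\,d\xi(x)$ are valid. These hold because $J^{2}$ is a compact metric, hence standard Borel, space, so Rohlin's disintegration theorem supplies the needed structure; this is what the Disintegration Lemma above records. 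Note that only the single disintegration over the first coordinate is used, and the slice-singularity hypothesis enters solely through the two applications of \thmref{FM}.
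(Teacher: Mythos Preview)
Your proof is correct and follows essentially the same approach as the paper: both arguments fix $F\perp\{e_{n}\}_{n\in\mathbb{N}_{0}^{2}}$, use the disintegration \eqref{eq:A2} to rewrite $\langle F,e_{(n_{1},n_{2})}\rangle_{L^{2}(\mu)}$ as an iterated integral, apply \thmref{FM} first in $L^{2}(\xi)$ (using singularity of $\xi$) to kill the inner integral, and then in $L^{2}(\sigma^{x})$ (using singularity of $\sigma^{x}$) to conclude $F=0$. Your write-up is simply more explicit about the measure-theoretic bookkeeping (measurability of $\varphi_{m}$, the passage to a single $\xi$-conull set via countability, and the Pythagorean identity), which the paper leaves implicit.
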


\begin{proof}
We shall show that, if $\left\langle F,e_{n}\right\rangle _{L^{2}\left(\mu\right)}=0$,
$\forall n\in\mathbb{N}_{0}^{2}$, then $F=0$ $\mu$-a.e. But 
\begin{align}
\left\langle F,e_{n}\right\rangle _{L^{2}\left(\mu\right)} & =\int_{0}^{1}e_{n_{1}}\left(x\right)\left(\int_{0}^{1}e_{n_{2}}\left(y\right)\overline{F\left(x,y\right)}\sigma^{x}\left(dy\right)\right)d\xi\left(x\right)\nonumber \\
 & =0,\;\forall n=\left(n_{1},n_{2}\right)\in\mathbb{N}_{0}^{2}\nonumber \\
 & \Downarrow\quad\left(\text{since \ensuremath{\xi} is singular}\right)\label{eq:fm1}\\
\int_{0}^{1}e_{n_{2}}\left(y\right) & \overline{F\left(x,y\right)}\sigma^{x}\left(dy\right)=0,\;a.a.\;x,\;\forall n_{2}\in\mathbb{N}_{0}\nonumber \\
 & \Downarrow\quad\left(\text{since \ensuremath{\sigma^{x}\left(\cdot\right)} is singular a.a. \ensuremath{x}}\right)\label{eq:fm2}\\
F\left(x,y\right) & =0,\;a.a.\;\left(x,y\right)\;\text{w.r.t. }\mu.\nonumber 
\end{align}

This gives the desired conclusion that $\left\{ e_{n}\right\} _{n\in\mathbb{N}_{0}^{2}}$
is total in $L^{2}\left(\mu\right)$.
\end{proof}
\begin{example}[$d=2$]
 $\mu\in\mathcal{M}^{+}\left(\mathbb{T}^{2}\right)$, $W=$ Sierpinski
gasket/carpet (\figref{2df}).

Note that, for a.a. $x$ w.r.t. $\xi$, the measure $\sigma^{x}$
on $A\left(x\right)=\left\{ y\mid\left(x,y\right)\in W\right\} $
is a fractal measure with variable gap size; and by Kakutani's theorem,
for a.a. $x$, $\sigma^{x}\left(dy\right)$ is singular relative to
the Lebesgue measure. Hence we can apply F\&M Riesz as in (\ref{eq:fm1}),
and (\ref{eq:fm2}).
\end{example}

The detailed properties of the fractals from \figref{2df} (A) and
(B) will be derived in \secref{Sie} below.

\begin{figure}[H]
\begin{tabular}{ccccc}
\includegraphics[width=0.25\columnwidth]{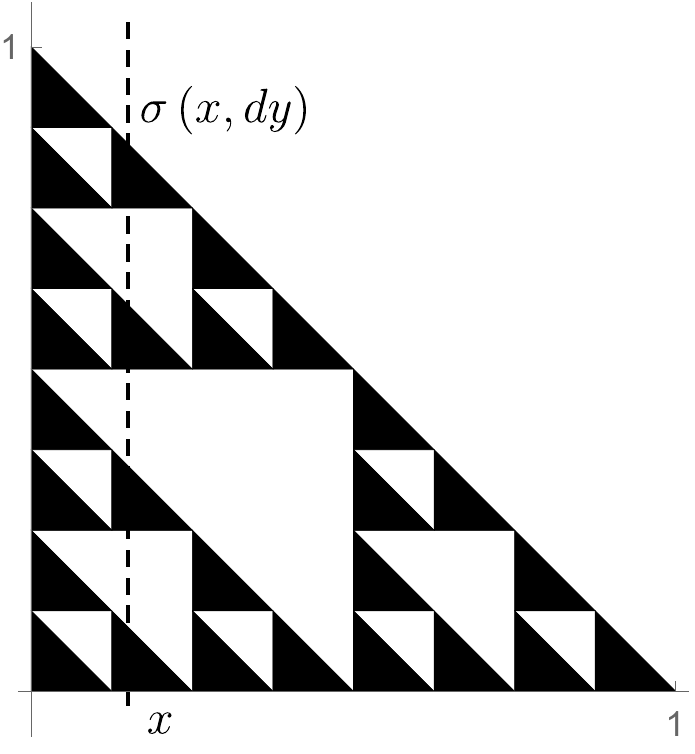} &  &  &  & \includegraphics[width=0.25\columnwidth]{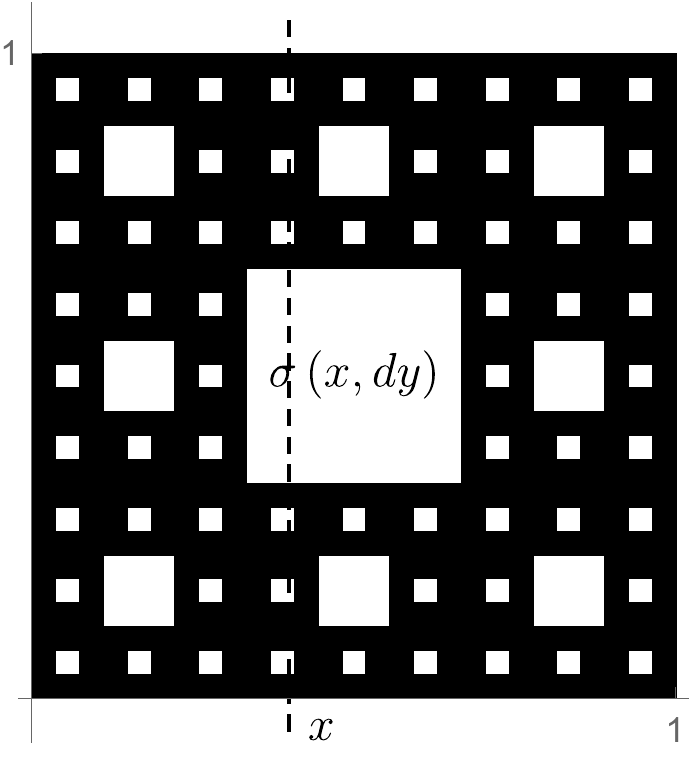}\tabularnewline
(A) Sierpinski gasket &  &  &  & (B) Sierpinski carpet\tabularnewline
\end{tabular}

\caption{\label{fig:2df}Examples of slice singular measures.}
\end{figure}

While the Sierpinski constructions in \figref{2df} are better known
as self-affine planar sets, it is in fact the corresponding \emph{measures}
which are important for algorithms and for frame-harmonic analysis.
As it turns out, the particular affine maps (see (\ref{eq:F1}), (\ref{eq:F2}),
and \figref{isp} below) going into the Sierpinski constructions are
in fact special cases of a more general family of iterated function
systems (IFS.) They are discussed in detail in sections \ref{sec:gifs}
and \ref{sec:Sie} below. Brief preview: Given a system of contractive
mappings, affine or conformal, there are then two associated fixed-point
problems, one for compact sets, and the other for probability measures:
The case of the sets $W$ is discussed in (\ref{eq:d9}), and the
measures $\mu$ in (\ref{eq:d7}). For a fixed IFS, the set in question
arises as the support of an associated IFS-measure $\mu$. Probabilistic
features of these constructions are outlined in sect \ref{sec:gifs},
and their fractal properties, in sect \ref{sec:Sie}, below. In particular,
we show that these planar Sierpinski measures $\mu$ are slice-singular.

\section{Frames, projections, and Kaczmarz algorithms}

While earlier approaches to the Kaczmarz algorithm in Hilbert space
have dealt with recursive constructions of vectors, as needed in optimization
problems, or in harmonic analysis, we present here an extension of
the algorithm to the context of countable systems of \emph{selfadjoint
projections} in a Hilbert space. As outlined in subsequent sections
of our paper, the projection setting is motivated directly by applications;
the \emph{randomized} Kaczmarz algorithms, just one of them.

For the benefit of readers, and for later reference, we include below
a brief review of fundamentals for the classical Kaczmarz algorithm,
and its variants. This also gives us a suitable framework for our
present results: An operator theoretic extension of Kaczmarz, with
applications to multivariable fractal measures.

\emph{Literature guide}: In addition to Kaczmarz' pioneering paper
\cite{K-1937}, there are also the following more recent developments
of relevance to our present discussion \cite{MR1898684,MR2208766,MR2140451,MR2263965,MR2311862,MR2721177,MR2835851,MR3117886,MR3159297,MR3450541,MR3439812,MR3796634,MR3846956,MR3896982},
as well as \cite{2016arXiv160308852H,MR3796641,HERR2018}.\\

The classical Kaczmarz algorithm is an iterative method for solving
systems of linear equations, for example, $Ax=b$, where $A$ is an
$m\times n$ matrix.

Assume the system is consistent. Let $x_{0}$ be an arbitrary vector
in $\text{\ensuremath{\mathbb{R}^{n}}}$, and set 
\begin{equation}
x_{k}:=\argmin_{\left\langle a_{j},x\right\rangle =b_{j}}\left\Vert x-x_{k-1}\right\Vert ^{2},\;k\in\mathbb{N};\label{eq:C1}
\end{equation}
where $j=k\mod m$, and $a_{j}$ denotes the $j^{th}$ row of $A$.
At each iteration, the minimizer is given by 
\begin{equation}
x_{k}=x_{k-1}+\frac{b_{j}-\left\langle a_{j},x_{k-1}\right\rangle }{\left\Vert a_{j}\right\Vert ^{2}}a_{j}.\label{eq:C2}
\end{equation}
That is, the algorithm recursively projects the current state onto
the hyperplane determined by the next row vector of $A$.

There is a stochastic version of (\ref{eq:C2}), where the row vectors
of $A$ are selected randomly \cite{MR2500924}. Also see Sections
\ref{subsec:rkac} and \ref{subsec:axy} below.
\begin{rem}
Following standard conventions in approximation theory, we use the
notation \emph{argmin} for denoting the vector which realizes a specified
optimization; in this case (see \figref{kap}), we refer to the minimum
problem on the right hand side in eq (\ref{eq:C1}). So in the particular
instance of the Kaczmarz algorithm (\ref{eq:C2}), we are in finite
dimensions, and there is then an easy, geometric, and explicit formula
for the argmin vector occurring in each step of the algorithm, see
\figref{kap}.
\end{rem}

The Kaczmarz algorithm can be formulated in the Hilbert space setting
as follows:
\begin{defn}
Let $\left\{ e_{j}\right\} _{j\in\mathbb{N}_{0}}$ be a spanning set
of unit vectors in a Hilbert space $\mathscr{H}$, i.e., $span\left\{ e_{j}\right\} $
is dense in $\mathscr{H}$. For all $x\in\mathscr{H}$, let $x_{0}=e_{0}$,
and set 
\begin{equation}
x_{k}:=x_{k-1}+e_{k}\left\langle e_{k},x-x_{k-1}\right\rangle .\label{eq:C3}
\end{equation}
We say the sequence $\left\{ e_{j}\right\} _{j\in\mathbb{N}_{0}}$
is \emph{effective }if $\left\Vert x_{k}-x\right\Vert \rightarrow0$
as $k\rightarrow\infty$, for all $x\in\mathscr{H}$.
\end{defn}

\begin{rem}
A key motivation for our present analysis is an important result by
Stanis\l aw Kwapie\'{n}  and Jan Mycielski \cite{MR2263965}, giving
a criterion for stationary sequences (referring to a suitable $L^{2}\left(\mu\right)$)
to be effective.
\end{rem}

\textbf{Observation.} Equation (\ref{eq:C3}) yields, by forward induction:
\begin{eqnarray*}
x-x_{k} & = & \left(1-P_{k}\right)\left(x-x_{k-1}\right)\\
 & = & \left(1-P_{k}\right)\left(1-P_{k-1}\right)\left(x-x_{k-2}\right)\\
 & \vdots\\
 & = & \left(1-P_{k}\right)\left(1-P_{k-1}\right)\cdots\left(1-P_{0}\right)x,
\end{eqnarray*}
where $P_{j}$ is the orthogonal projection onto $e_{j}$.

\begin{figure}[H]
\begin{tabular}{>{\centering}p{0.45\columnwidth}>{\centering}p{0.45\columnwidth}}
\includegraphics[width=0.35\columnwidth]{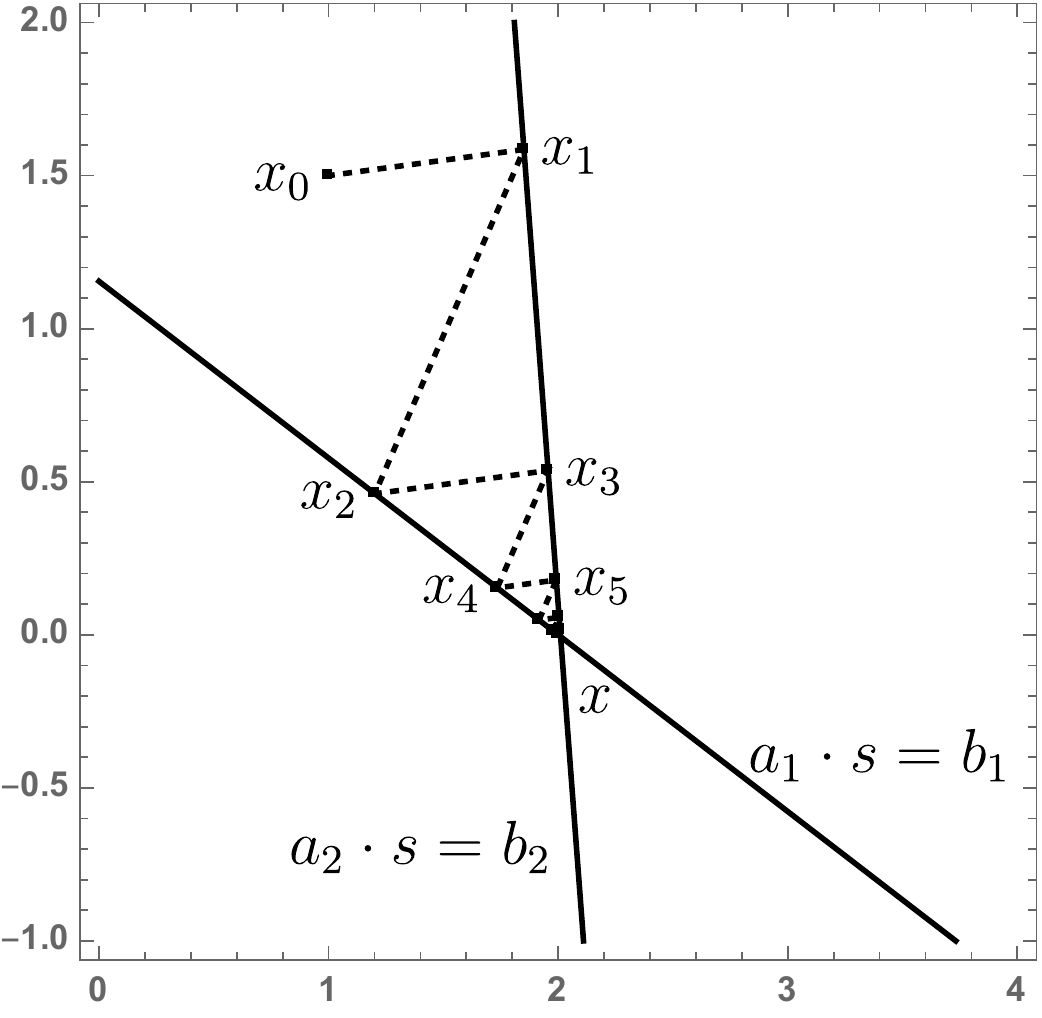} & \includegraphics[width=0.35\columnwidth]{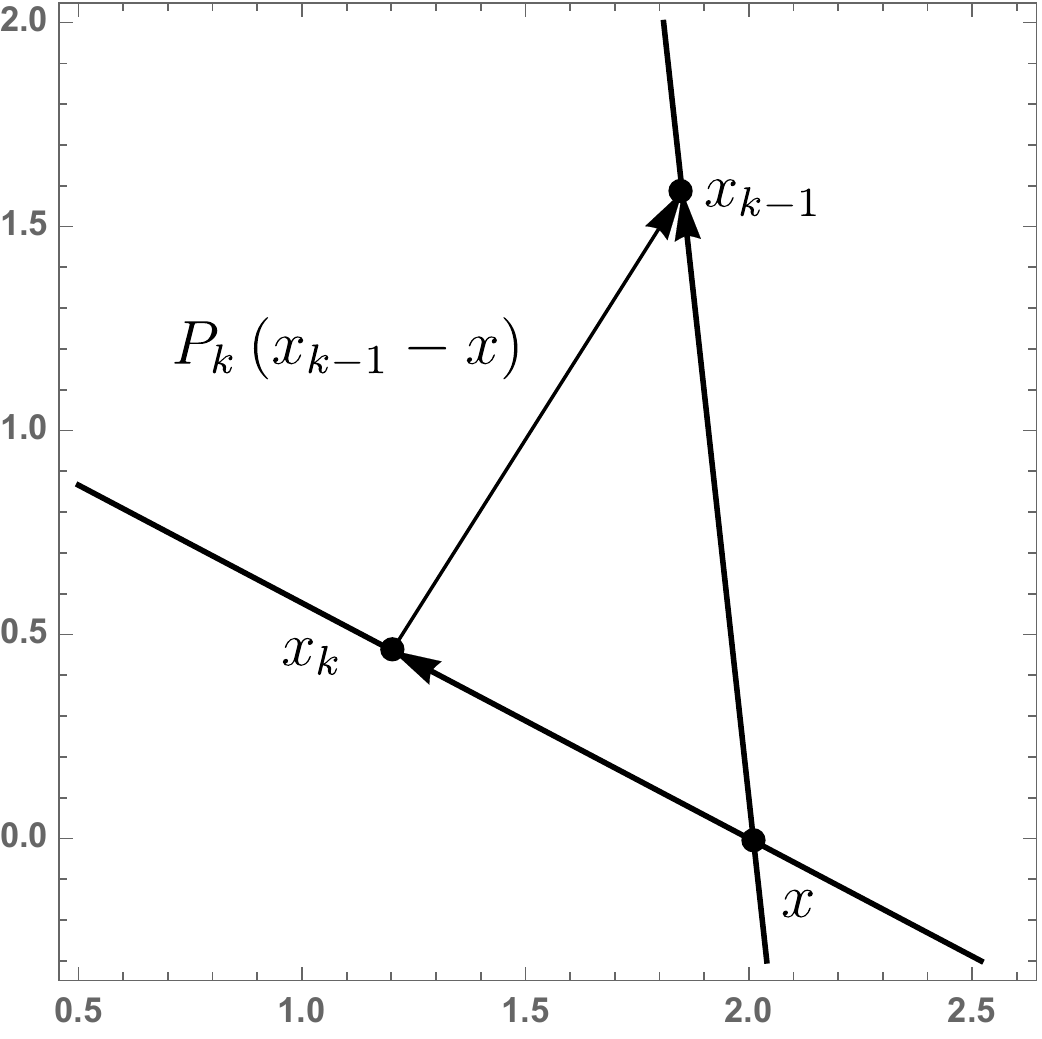}\tabularnewline
(A) Approximate solution; random starting point $x_{0}$ & (B) orthogonality relation

$\left\Vert x_{k-1}-x\right\Vert ^{2}=\left\Vert x_{k-1}-x_{k}\right\Vert ^{2}+\left\Vert x_{k}-x\right\Vert ^{2}$\tabularnewline
\end{tabular}

\hfill{}

\caption{\label{fig:kap} Solution to $Ax=b$ by the Kaczmarz algorithm, with
$a_{1}=\left(\cos\left(\pi/3\right),\sin\left(\pi/3\right)\right)$,
$a_{2}=\left(\cos\left(0.1\right),\sin\left(0.1\right)\right)$, and
$b=\left(1,2\right)$.}
\end{figure}

\subsection{Algorithms, and products of projections}

We now present an extension of the Kaczmarz algorithm; an extension
to a setting of an infinite sequence of selfadjoint projections, as
opposed to the classical case of sequences of vectors in Hilbert space.
There are more general results on limits of iterated products of selfadjoint
projections. See \cite{MR3796644} and also \cite{MR0051437,MR647807,MR2129258}.
For applications of infinite products of operators to central problems
in mathematical physics, see e.g., papers by D. Ruelle et al \cite{MR0284067,MR534172,MR647807}.
\begin{flushleft}
\textbf{Preliminaries}
\par\end{flushleft}

Let $\mathscr{H}$ be a Hilbert space. An operator $P:\mathscr{H}\rightarrow\mathscr{H}$
is said to be a \emph{selfadjoint projection} iff (Def.) $P=P^{*}=P^{2}$.
It is known that there is a bijective correspondence between:
\begin{enumerate}
\item all closed subspaces $\mathscr{M}\subset\mathscr{H}$; and
\item the set of all selfadjoint projections $P$.
\end{enumerate}
If $\mathscr{M}$ is as in (i), then $P$ may be obtained from the
axioms for $\mathscr{H}$; and we have 
\begin{equation}
P\mathscr{H}=\mathscr{M}=\left\{ x\in\mathscr{H}\mathrel{;}Px=x\right\} .\label{eq:cp1}
\end{equation}
Conversely, if $P$ is given as in (ii), then $\mathscr{M}$ (see
(\ref{eq:cp1})) is a closed subspace in $\mathscr{H}$.

The ortho-complement
\begin{equation}
\mathscr{M}^{\perp}:=\mathscr{H}\ominus\mathscr{M}=\left\{ x\in\mathscr{H}\mathrel{;}Px=0\right\} 
\end{equation}
is the closed subspace corresponding to the selfadjoint projection
$P^{\perp}:=1-P$. (Here, we denote the identity operator in $\mathscr{H}$
by $1$, as it is the unit in the $C^{*}$-algebra $\mathscr{B}\left(\mathscr{H}\right)$.)
\begin{rem}
For our present purpose, all projections will be assumed selfadjoint.
On occasion, to save space, we shall simply say \textquotedblleft projection\textquotedblright{}
when selfadjointness is implicit. (We note that selfadjoint projections
yield orthogonal sum-splittings, and are therefore often, equivalently,
referred to as orthogonal projections.)
\end{rem}

We shall further make use of the \emph{lattice operations} corresponding
to the correspondence (i)$\leftrightarrow$(ii) above:

If $\mathscr{M}_{i}$, $i=1,2$, are closed subspaces with corresponding
projections $P_{i}$, $i=1,2$; then TFAE:
\begin{align}
\mathscr{M}_{1} & \subseteq\mathscr{M}_{2},\;\text{and}\\
P_{1} & =P_{1}P_{2}.
\end{align}
Moreover, for a pair of  projections $\left\{ P_{i}\right\} _{i=1,2}$,
TFAE:
\begin{eqnarray*}
P_{1} & = & P_{1}P_{2}\\
 & \Updownarrow\\
P_{1} & = & P_{2}P_{1}\\
 & \Updownarrow\\
\left\Vert P_{1}x\right\Vert  & \leq & \left\Vert P_{2}x\right\Vert ,\;\forall x\in\mathscr{H}\\
 & \Updownarrow\\
\left\langle x,P_{1}x\right\rangle  & \leq & \left\langle x,P_{2}x\right\rangle ,\;\forall x\in\mathscr{H}.
\end{eqnarray*}

\textbf{Caution: }In general, the class of selfadjoint projections
is \emph{not} closed under products, under sums, or under differences.
\begin{thm}
\label{thm:kac}Let $\left\{ P_{j}\right\} _{j\in\mathbb{N}_{0}}$
be a system of selfadjoint projections in a Hilbert space $\mathscr{H}$.
For all $n\in\mathbb{N}_{0}$, set 
\begin{align}
T_{n} & =\left(1-P_{n}\right)\left(1-P_{n-1}\right)\cdots\left(1-P_{0}\right),\;\text{and}\label{eq:f8}\\
Q_{n} & =P_{n}\left(1-P_{n-1}\right)\cdots\left(1-P_{0}\right),\quad Q_{0}=P_{0}.\label{eq:f2}
\end{align}
Then, 
\begin{align}
1-T_{n}^{*}T_{n} & =\sum_{j=0}^{n}Q_{j}^{*}Q_{j},\;\text{and}\label{eq:cc6}\\
1-T_{n} & =\sum_{j=0}^{n}Q_{j}.\label{eq:cc7}
\end{align}
\end{thm}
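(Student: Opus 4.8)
The plan is to prove both identities by induction on $n$, exploiting the telescoping structure already implicit in the ``Observation'' preceding the theorem. The base case $n=0$ is immediate: $T_0 = 1-P_0$ and $Q_0 = P_0$, so $1-T_0 = P_0 = Q_0$ gives \eqref{eq:cc7}, and $1-T_0^*T_0 = 1-(1-P_0)^2 = 1-(1-P_0) = P_0 = P_0^*P_0 = Q_0^*Q_0$ gives \eqref{eq:cc6}, where we used $P_0 = P_0^* = P_0^2$.

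For the inductive step, I would first record the two one-step recursions that follow directly from the definitions \eqref{eq:f8} and \eqref{eq:f2}: namely $T_n = (1-P_n)T_{n-1}$ and $Q_n = P_n T_{n-1}$. Adding these gives $T_n + Q_n = (1-P_n)T_{n-1} + P_n T_{n-1} = T_{n-1}$, hence
\begin{equation*}
1 - T_n = (1-T_{n-1}) + Q_n,
\end{equation*}
and \eqref{eq:cc7} follows at once from the induction hypothesis. For \eqref{eq:cc6}, I would compute $T_n^*T_n = T_{n-1}^*(1-P_n)^*(1-P_n)T_{n-1} = T_{n-1}^*(1-P_n)T_{n-1}$, using that $1-P_n$ is a selfadjoint projection so $(1-P_n)^* (1-P_n) = (1-P_n)^2 = 1-P_n$. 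Therefore
\begin{equation*}
T_{n-1}^*T_{n-1} - T_n^*T_n = T_{n-1}^*\bigl(1 - (1-P_n)\bigr)T_{n-1} = T_{n-1}^* P_n T_{n-1} = (P_n T_{n-1})^*(P_n T_{n-1}) = Q_n^* Q_n,
\end{equation*}
again using $P_n = P_n^* = P_n^2$. Rearranging gives $1 - T_n^*T_n = (1 - T_{n-1}^*T_{n-1}) + Q_n^*Q_n$, and the induction hypothesis closes the argument.

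There is no serious obstacle here; the only thing to be careful about is the bookkeeping of adjoints and the repeated use of the identities $P^2 = P = P^*$ for each projection in the product (and correspondingly for each $1-P_j$). The key structural fact making everything work is that the product defining $T_n$ telescopes through the splitting $1 = P_n + (1-P_n)$ applied on the left, which simultaneously produces the increment $Q_n$ in \eqref{eq:cc7} and, after conjugation, the rank-one-in-spirit increment $Q_n^*Q_n$ in \eqref{eq:cc6}. One could alternatively prove \eqref{eq:cc6} from \eqref{eq:cc7} by a symmetrization argument, but the direct induction is cleaner and avoids having to relate $\sum Q_j$ to $\sum Q_j^* Q_j$, which are genuinely different operators in general.
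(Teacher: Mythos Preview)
Your proof is correct and essentially identical to the paper's: both derive the one-step recursions $T_n = T_{n-1} - Q_n$ and $T_n^*T_n = T_{n-1}^*T_{n-1} - Q_n^*Q_n$ (the paper expands $(T_{n-1}-Q_n)^*(T_{n-1}-Q_n)$ and simplifies the cross terms, while you factor through $T_{n-1}^*(1-P_n)T_{n-1}$, but these are the same computation), and then telescope. The only cosmetic difference is that you phrase it as formal induction whereas the paper writes out the iterated recursion explicitly.
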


\begin{rem}
The operator products introduced in formulas (\ref{eq:f8}) and (\ref{eq:f2})
above will play an important role in our subsequent considerations.
Hence, when we refer to $Q_{n}$, and $T_{n}$, we shall mean the
particular operator products in (\ref{eq:f8}) and (\ref{eq:f2}).
The input in our algorithm will be a fixed system of selfadjoint projections,
$P_{n}$.

Note that the factors making up the operator products in (\ref{eq:f8})
and (\ref{eq:f2}) are non-commuting. We stress that non-comutativity
is an important (and subtle) feature of the theory of operator frames;
see e.g., \cite{MR3642406}.
\end{rem}

\begin{proof}[Proof of \thmref{kac}]
One checks that $T_{n}=T_{n-1}-Q_{n}$, so that 
\begin{eqnarray*}
T_{n}^{*}T_{n} & = & \left(T_{n-1}^{*}-Q_{n}^{*}\right)\left(T_{n-1}-Q_{n}\right)\\
 & = & T_{n-1}^{*}T_{n-1}-T_{n-1}^{*}Q_{n}-Q_{n}^{*}T_{n-1}+Q_{n}^{*}Q_{n}\\
 & = & T_{n-1}^{*}T_{n-1}-Q_{n}^{*}Q_{n}-Q_{n}^{*}Q_{n}+Q_{n}^{*}Q_{n}\\
 & = & T_{n-1}^{*}T_{n-1}-Q_{n}^{*}Q_{n}\\
 & = & T_{n-2}^{*}T_{n-2}-Q_{n-1}^{*}Q_{n-1}-Q_{n}^{*}Q_{n}\\
 & \vdots\\
 & = & 1-P_{0}-\sum_{j=1}^{n}Q_{j}^{*}Q_{j}\\
 & = & 1-\sum_{j=0}^{n}Q_{j}^{*}Q_{j}.
\end{eqnarray*}
Since $Q_{n}=T_{n-1}-T_{n}$, so 
\begin{align*}
\sum_{j=0}^{n}Q_{j} & =Q_{0}+\left(T_{0}-T_{1}\right)+\left(T_{1}-T_{2}\right)+\cdots+\left(T_{n-1}-T_{n}\right)\\
 & =P_{0}+1-P_{0}-T_{n}=1-T_{n}.
\end{align*}
\end{proof}
Let $\mathscr{H}$ be a Hilbert space, and let $\left\{ A_{n}\right\} _{n\in\mathbb{N}}$
be a sequence of bounded operators in $\mathscr{H}$, i.e., $A_{n}\in\mathscr{B}\left(\mathscr{H}\right)$,
$\forall n\in\mathbb{N}$. We shall need the following two notions
of convergence in $\mathscr{B}\left(\mathscr{H}\right)$.
\begin{defn}
~
\begin{enumerate}
\item We say that $A_{n}\rightarrow0$ in the \emph{strong operator topology}
(SOT) iff (Def.) $\lim_{n\rightarrow\infty}\left\Vert A_{n}x\right\Vert =0$
for all vectors $x\in\mathscr{H}$.
\item We say that $A_{n}\rightarrow0$ in the \emph{weak operator topology}
(WOT) iff (Def.) $\lim_{n\rightarrow\infty}\left\langle x,A_{n}y\right\rangle =0$
for all pairs of vectors $x,y\in\mathscr{H}$. Here $\left\langle \cdot,\cdot\right\rangle $
refers to the inner product in $\mathscr{H}$.
\end{enumerate}
\end{defn}

\begin{cor}
\label{cor:fi}The following are equivalent:
\begin{enumerate}
\item \label{enu:mf1}$1=\sum_{j\in\mathbb{N}_{0}}Q_{j}^{*}Q_{j}$ in the
weak operator topology.
\item $1=\sum_{j\in\mathbb{N}_{0}}Q_{j}$ in the strong operator topology.
\item \label{enu:mf3}$T_{n}\rightarrow0$ in the strong operator topology.
\end{enumerate}
\end{cor}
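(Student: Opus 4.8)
The plan is to derive all three equivalences from \thmref{kac}, which already gives the finite-$n$ identities $1-T_n^*T_n = \sum_{j=0}^n Q_j^*Q_j$ and $1-T_n = \sum_{j=0}^n Q_j$. The first step is to record that the partial-sum operator $S_n := \sum_{j=0}^n Q_j^*Q_j = 1 - T_n^*T_n$ is a monotone increasing sequence of positive operators bounded above by $1$ (since $T_n^*T_n \geq 0$), and likewise $0 \leq T_n^*T_n \leq 1$. Then I would use the standard fact that a monotone bounded sequence of selfadjoint operators converges in the strong operator topology; moreover for positive operators, WOT-convergence of a monotone sequence to a limit is equivalent to SOT-convergence to that limit (polarize via $\langle x, S_n x\rangle$ and use that $S_\infty - S_n \geq 0$, so $\|(S_\infty-S_n)^{1/2}x\|^2 = \langle x,(S_\infty-S_n)x\rangle \to 0$). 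This immediately yields the equivalence of (i) with ``$T_n^*T_n \to 0$ in SOT'', because $\sum Q_j^*Q_j = 1$ (in WOT, hence SOT) means $1 - T_n^*T_n \to 1$, i.e. $T_n^*T_n \to 0$ strongly.

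The second step is to connect ``$T_n^*T_n \to 0$ in SOT'' with ``$T_n \to 0$ in SOT'' (which is statement (iii)). One direction is trivial: if $T_n \to 0$ strongly then $\|T_n^*T_n x\| \le \|T_n^*\|\,\|T_n x\| \le \|T_n x\| \to 0$ since each $T_n$ is a product of the contractions $1-P_j$ and hence $\|T_n\| \le 1$. For the converse, note $\|T_n x\|^2 = \langle x, T_n^*T_n x\rangle \le \|x\|\,\|T_n^*T_n x\| \to 0$, so $T_n^*T_n \to 0$ strongly forces $T_n \to 0$ strongly. Hence (i) $\Leftrightarrow$ (iii).

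The third step handles (ii). From $1 - T_n = \sum_{j=0}^n Q_j$ in \thmref{kac}, the partial sums $\sum_{j=0}^n Q_j$ converge in SOT if and only if $T_n$ converges in SOT, and in that case $\sum_{j\in\mathbb{N}_0} Q_j = 1$ precisely when $T_n \to 0$ in SOT. So (ii) $\Leftrightarrow$ (iii) is essentially immediate from the telescoping identity, with no analytic content beyond the definition of SOT-convergence of a series. Assembling the three steps gives (i) $\Leftrightarrow$ (iii) $\Leftrightarrow$ (ii).

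I expect the only genuine subtlety to be the passage in step one from weak to strong convergence for the monotone sequence $S_n = \sum_{j=0}^n Q_j^*Q_j$: one must be careful that it is monotonicity plus positivity (not merely boundedness) that upgrades WOT to SOT here, via the square-root/Cauchy--Schwarz argument $\|(S-S_n)^{1/2}x\|^2 = \langle x,(S-S_n)x\rangle$. Everything else is bookkeeping with the contractivity bound $\|1-P_j\|\le 1$ (so $\|T_n\|\le 1$) and the telescoping relations $Q_n = T_{n-1}-T_n$ already established. I would present the argument as: (a) monotone convergence gives a SOT-limit $T_\infty^*T_\infty$-type object and the WOT$\Rightarrow$SOT upgrade; (b) the contraction estimates linking $T_n$ and $T_n^*T_n$; (c) the telescoping identity for the series of $Q_j$'s.
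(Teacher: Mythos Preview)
Your argument is correct; the paper itself offers no proof for this corollary, treating it as immediate from \thmref{kac}, so you are supplying the details the authors omit. One small streamlining: the monotone-convergence upgrade in your step one is not actually needed, since the identity $\langle x,T_n^{*}T_n x\rangle=\|T_n x\|^{2}$ already gives the equivalence of (i) and (iii) directly---WOT convergence $T_n^{*}T_n\to 0$ tested on diagonal pairs $x=y$ is exactly $\|T_n x\|\to 0$, and conversely $|\langle y,T_n^{*}T_n x\rangle|=|\langle T_n y,T_n x\rangle|\leq\|T_n y\|\,\|T_n x\|\to 0$---so you can bypass Vigier's theorem and the square-root trick entirely.
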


\begin{rem}
Under suitable conditions on $Q_{n}$ one can show that the convergence
in part (\ref{enu:mf1}) of the corollary also holds in the strong
operator topology.
\end{rem}

\begin{defn}
The system $\left\{ P_{j}\right\} _{j\in\mathbb{N}_{0}}$ is called
\emph{effective} if $T_{n}\rightarrow0$ in the strong operator topology.
\end{defn}

\begin{cor}
\label{cor:gf}Suppose the system $\left\{ P_{j}\right\} _{j\in\mathbb{N}_{0}}$
is effective. Then, for all $x\in\mathscr{H}$, 
\begin{equation}
x=\sum_{j\in\mathbb{N}_{0}}Q_{j}x.\label{eq:C5}
\end{equation}
Moreover, for all $x,y\in\mathscr{H}$, 
\begin{equation}
\left\langle x,y\right\rangle =\sum_{j\in\mathbb{N}_{0}}\left\langle Q_{j}x,Q_{j}y\right\rangle ;
\end{equation}
and in particular, 
\begin{equation}
\left\Vert x\right\Vert ^{2}=\sum_{j\in\mathbb{N}_{0}}\left\Vert Q_{j}x\right\Vert ^{2}.\label{eq:C7}
\end{equation}
\end{cor}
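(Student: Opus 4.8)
The plan is to read off all three identities directly from the two operator identities of \thmref{kac}, by applying them to vectors and letting $n\to\infty$. First I would fix $x\in\mathscr{H}$ and invoke (\ref{eq:cc7}), which gives, for every $n\in\mathbb{N}_{0}$, the exact finite identity
\[
\sum_{j=0}^{n}Q_{j}x \;=\; x-T_{n}x .
\]
Since $\left\{ P_{j}\right\} $ is effective, $T_{n}\to0$ in the strong operator topology, i.e.\ $\left\Vert T_{n}x\right\Vert \to0$; hence the partial sums on the left converge in norm to $x$, which is exactly (\ref{eq:C5}). I would also note that this is convergence of the series in the prescribed order $j=0,1,2,\dots$, inherited from the telescoping $Q_{n}=T_{n-1}-T_{n}$.

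Next I would treat the bilinear identity. Fixing $x,y\in\mathscr{H}$ and pairing (\ref{eq:cc6}) against $x$ and $y$, and using $\left\langle Q_{j}^{*}Q_{j}x,y\right\rangle =\left\langle Q_{j}x,Q_{j}y\right\rangle $ together with $\left\langle T_{n}^{*}T_{n}x,y\right\rangle =\left\langle T_{n}x,T_{n}y\right\rangle $, one obtains
\[
\sum_{j=0}^{n}\left\langle Q_{j}x,Q_{j}y\right\rangle \;=\;\left\langle x,y\right\rangle -\left\langle T_{n}x,T_{n}y\right\rangle .
\]
By Cauchy--Schwarz, $\left|\left\langle T_{n}x,T_{n}y\right\rangle \right|\le\left\Vert T_{n}x\right\Vert \left\Vert T_{n}y\right\Vert \to0$ by effectiveness, so the left-hand side converges to $\left\langle x,y\right\rangle $. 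Specializing $y=x$ (equivalently, taking $y=x$ directly in (\ref{eq:cc6}), where $\left\langle \left(1-T_{n}^{*}T_{n}\right)x,x\right\rangle =\left\Vert x\right\Vert ^{2}-\left\Vert T_{n}x\right\Vert ^{2}$) then yields (\ref{eq:C7}).

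I do not expect a genuine obstacle here: the corollary is an immediate consequence of the algebraic telescoping identities of \thmref{kac}, and the only analytic input is the defining property of effectiveness ($\left\Vert T_{n}x\right\Vert \to0$ for every $x$) together with Cauchy--Schwarz to control the cross term $\left\langle T_{n}x,T_{n}y\right\rangle $. The points worth flagging are that the series are asserted to converge in the fixed order $j=0,1,2,\dots$ rather than, a priori, unconditionally, and that the $Q_{j}$ need not form an orthogonal family of operators; nonetheless (\ref{eq:cc6}) shows $\sum_{j}Q_{j}^{*}Q_{j}=1$ in the appropriate topology (cf.\ \corref{fi}), so $\left\{ Q_{j}\right\} $ behaves like a Parseval-type operator frame, which is exactly the content of the displayed formulas.
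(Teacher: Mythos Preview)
Your proof is correct and is precisely the argument the paper has in mind: the corollary is stated without a separate proof there, as it is immediate from the identities (\ref{eq:cc6})--(\ref{eq:cc7}) of \thmref{kac} together with the definition of effectiveness, and your write-up simply spells this out.
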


\begin{rem}
The system of operators $\left\{ Q_{j}\right\} _{j\in\mathbb{N}_{0}}$
in \corref{gf} has frame-like properties. Specifically, the mapping
\[
\mathscr{H}\ni x\xmapsto{\;V\;}\left(Q_{j}x\right)\in l^{2}\left(\mathbb{N}_{0}\right)\otimes\mathscr{H}
\]
plays the role of an analysis operator, and the synthesis operator
$V^{*}$ is given by 
\[
l^{2}\left(\mathbb{N}_{0}\right)\otimes\mathscr{H}\ni\xi\xmapsto{\;V^{*}\;}\sum_{j\in\mathbb{N}_{0}}Q_{j}^{*}\xi_{j}.
\]
Note that $1=V^{*}V$, by part (\ref{enu:mf1}) of \corref{fi}; and
eq. (\ref{eq:C7}) is the generalized Parseval identity. Also see
\propref{fd} below.
\end{rem}

\begin{prop}
\label{prop:fd}Let $\left\{ P_{j}\right\} _{j\in\mathbb{N}_{0}}$
be an effective system. Then there exits a Hilbert space $\mathscr{K}$,
an isometry $V:\mathscr{H}\rightarrow\mathscr{K}$, and selfadjoint
projections $E_{j}$ in $\mathscr{K}$, such that $Q_{j}^{*}Q_{j}=V^{*}E_{j}V$,
for all $j\in\mathbb{N}_{0}$. Thus, 
\begin{equation}
1=\sum_{j\in\mathbb{N}_{0}}Q_{j}^{*}Q_{j}=\sum_{j\in\mathbb{N}_{0}}V^{*}E_{j}V.\label{eq:d1}
\end{equation}
\end{prop}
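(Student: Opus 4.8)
\textbf{Proof proposal for Proposition \ref{prop:fd}.}

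The plan is to realize the family $\{Q_j^*Q_j\}_{j\in\mathbb{N}_0}$ as a compression of an orthogonal resolution of the identity on a larger Hilbert space. The natural candidate for $\mathscr{K}$ is the amplified space $l^2(\mathbb{N}_0)\otimes\mathscr{H}$, and the natural candidate for $V$ is precisely the analysis operator from the remark preceding the statement, namely $Vx = (Q_j x)_{j\in\mathbb{N}_0}$. First I would check that $V$ is a well-defined isometry: boundedness and the identity $\|Vx\|^2 = \sum_j \|Q_j x\|^2 = \|x\|^2$ are immediate from \eqref{eq:C7} in \corref{gf}, since $\{P_j\}$ is assumed effective. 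Then $V^*V = 1$ on $\mathscr{H}$.

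Next I would introduce, for each $j\in\mathbb{N}_0$, the operator $E_j$ on $\mathscr{K} = l^2(\mathbb{N}_0)\otimes\mathscr{H}$ defined as the "rank-one in the first leg" projection $E_j = (e_j e_j^*)\otimes 1_{\mathscr{H}}$, i.e. $E_j$ picks out the $j$-th component: $(E_j\xi)_k = \delta_{j,k}\,\xi_j$. Each $E_j$ is manifestly a selfadjoint projection, and $\sum_{j\in\mathbb{N}_0} E_j = 1_{\mathscr{K}}$ in SOT. The computation $V^*E_j V x$ then unwinds as follows: $V x = (Q_k x)_k$, so $E_j V x$ is the sequence supported at index $j$ with entry $Q_j x$, and applying $V^* = $ the synthesis operator $\xi \mapsto \sum_k Q_k^*\xi_k$ gives $V^*E_j V x = Q_j^*Q_j x$. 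This establishes $Q_j^*Q_j = V^*E_j V$ for every $j$, and summing (using \corref{fi}\eqref{enu:mf1} on the left and $\sum E_j = 1$ on the right, intertwined by the isometry $V$) yields \eqref{eq:d1}.

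The only genuinely delicate point is the interchange of the (WOT-convergent, a priori only weakly) sum $\sum_j Q_j^*Q_j = 1$ with the conjugation by $V$ and $V^*$; concretely, one must justify $V^*\big(\sum_j E_j\big)V = \sum_j V^*E_j V$ as an identity of operators, where the left sum converges in SOT on $\mathscr{K}$. Since $V$ and $V^*$ are bounded and SOT is preserved under left and right multiplication by a fixed bounded operator, $V^*(\sum_j E_j)V$ converges in SOT on $\mathscr{H}$ to the SOT-limit of the partial sums $\sum_{j=0}^n V^*E_jV = \sum_{j=0}^n Q_j^*Q_j = 1 - T_n^*T_n$ by \eqref{eq:cc6}, which tends to $1$ in SOT precisely because effectiveness gives $T_n\to 0$ in SOT and hence $T_n^*T_n\to 0$ in SOT as well (for any $x$, $\|T_n^*T_n x\|\le \|T_n^*\|\,\|T_n x\| \le \|T_n x\|\to 0$). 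So in fact the sum in \eqref{eq:d1} converges in SOT, not merely WOT, which is a small bonus. I would close by remarking that $\mathscr{K}$ can be cut down to the closure of $V\mathscr{H} + \sum_j E_j V\mathscr{H}$ if a minimal dilation is desired, though this is not needed for the statement.
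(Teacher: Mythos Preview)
Your proof is correct and follows essentially the same construction as the paper: the same dilation space $\mathscr{K}=l^{2}(\mathbb{N}_{0})\otimes\mathscr{H}$, the same analysis operator $Vx=(Q_{j}x)_{j}$, and the same coordinate projections $E_{j}$. Your treatment is in fact slightly more thorough than the paper's, since you explicitly justify the SOT convergence of $\sum_{j}V^{*}E_{j}V$ via $1-T_{n}^{*}T_{n}\to 1$, whereas the paper simply asserts that \eqref{eq:d1} follows from $Q_{j}^{*}Q_{j}=V^{*}E_{j}V$.
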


\begin{proof}
Let $\mathscr{K}=l^{2}\left(\mathbb{N}_{0}\right)\otimes\mathscr{H}\left(=\oplus_{\mathbb{N}_{0}}\mathscr{H}\right)$,
and set $V:\mathscr{H}\rightarrow\mathscr{K}$ by 
\[
Vx=\left(Q_{j}x\right)_{j\in\mathbb{N}_{0}}.
\]
Then, for all $x\in\mathscr{H}$ and $y=\left(y_{j}\right)\in\mathscr{K}$,
\[
\left\langle Vx,y\right\rangle _{\mathscr{K}}=\sum\left\langle Q_{j}x,y_{j}\right\rangle _{\mathscr{H}}=\left\langle x,\sum Q_{j}^{*}y_{j}\right\rangle _{\mathscr{H}}.
\]
Hence the adjoint operator $V^{*}$ is given by 
\[
V^{*}y=\sum_{j\in\mathbb{N}_{0}}Q_{j}^{*}y_{j}.
\]

For all $j\in\mathbb{N}_{0}$, let $E_{j}:\mathscr{K}\rightarrow\mathscr{K}$
be the projection, 
\[
E_{j}y=\left(0,\cdots,0,y_{j},0,\cdots\right),\;\forall y=\left(y_{j}\right)\in\mathscr{K}.
\]
Then $Q_{j}^{*}Q_{j}=V^{*}E_{j}V$, and  (\ref{eq:d1}) follows from
this.
\end{proof}
Let $\mathscr{H}$ be a fixed Hilbert space. We shall have occasion
to use Dirac's notation for rank-one operators in $\mathscr{H}$:
If $u,v\in\mathscr{H}$, we set $\left|u\left\rangle \right\langle v\right|$
the operator, which is defined by 
\[
\left|u\left\rangle \right\langle v\right|\left(x\right)=\left\langle v,x\right\rangle _{\mathscr{H}}u;
\]
or in physics terminology, 
\[
\left|u\left\rangle \right\langle v\right|\left.x\right\rangle =\left|u\right\rangle \left\langle v,x\right\rangle _{\mathscr{H}}.
\]
Note the following: For vectors $u_{i},v_{i}$, $=1,2$, we have:
\[
\left(\left|u_{1}\left\rangle \right\langle v_{1}\right|\right)\left(\left|u_{2}\left\rangle \right\langle v_{2}\right|\right)=\left\langle v_{1},u_{2}\right\rangle _{\mathscr{H}}\left|u_{1}\left\rangle \right\langle v_{2}\right|.
\]
For the adjoint operators, we have: 
\[
\left|u\left\rangle \right\langle v\right|^{*}=\left|v\left\rangle \right\langle u\right|.
\]
If $B\in\mathscr{B}\left(\mathscr{H}\right)$, we have 
\[
B\left|u\left\rangle \right\langle v\right|=\left|Bu\left\rangle \right\langle v\right|;
\]
and 
\[
\left|u\left\rangle \right\langle v\right|B=\left|u\left\rangle \right\langle B^{*}v\right|.
\]

\subsection{The case of rank-1 projections in Hilbert space}

Let $\left\{ P_{j}\right\} _{j\in\mathbb{N}_{0}}$ be a system of
rank-1  projections, i.e., $P_{j}=\left|e_{j}\left\rangle \right\langle e_{j}\right|$,
where $\left\{ e_{j}\right\} _{j\in\mathbb{N}_{0}}$ is a set of unit
vectors in $\mathscr{H}$. When the system $\left\{ e_{j}\right\} $
is independent, then the corresponding family of projections $P_{j}=\left|e_{j}\left\rangle \right\langle e_{j}\right|$
is non-commutative.

It follows from (\ref{eq:f2}) that every $Q_{j}$ is a rank-1 operator
with range in $span\left\{ e_{j}\right\} $. Thus there exists a unique
$g_{j}\in\mathscr{H}$ such that 
\begin{equation}
Q_{j}=\left|e_{j}\left\rangle \right\langle g_{j}\right|,\;j\in\mathbb{N}_{0}.\label{eq:C16}
\end{equation}

\begin{lem}
\label{lem:qn}Given $\left\{ P_{j}\right\} _{j\in\mathbb{N}_{0}}$
a sequence of s.a. projections in $\mathscr{H}$; set 
\begin{equation}
Q_{n}:=P_{n}P_{n-1}^{\perp}\cdots P_{1}^{\perp}P_{0}^{\perp},\label{eq:C17}
\end{equation}
where $P_{j}^{\perp}:=1-P_{j}$; then 
\begin{equation}
Q_{n}=P_{n}\left(1-\sum\nolimits _{j=0}^{n-1}Q_{j}\right).
\end{equation}
\end{lem}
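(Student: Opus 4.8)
The plan is to read this off directly from \thmref{kac}. First I would note that the operator $Q_n$ defined in \eqref{eq:C17} coincides with the operator $Q_n$ of \eqref{eq:f2}: writing $P_j^{\perp}=1-P_j$, the product $P_nP_{n-1}^{\perp}\cdots P_1^{\perp}P_0^{\perp}$ is literally $P_n\bigl(1-P_{n-1}\bigr)\cdots\bigl(1-P_0\bigr)$. Likewise, in the notation of \eqref{eq:f8} we have $T_{n-1}=\bigl(1-P_{n-1}\bigr)\cdots\bigl(1-P_0\bigr)$, so that $Q_n=P_n T_{n-1}$ for every $n\in\mathbb{N}$ (with the empty-product convention $T_{-1}=1$, so $Q_0=P_0$).

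Next I would invoke identity \eqref{eq:cc7} of \thmref{kac}, applied with $n-1$ in place of $n$:
\[
\sum_{j=0}^{n-1}Q_j = 1-T_{n-1},\qquad\text{equivalently}\qquad T_{n-1}=1-\sum_{j=0}^{n-1}Q_j .
\]
Substituting this into $Q_n=P_nT_{n-1}$ gives $Q_n=P_n\bigl(1-\sum_{j=0}^{n-1}Q_j\bigr)$, which is the assertion. The case $n=0$ is the degenerate instance $Q_0=P_0(1-0)=P_0$, consistent with the convention above.

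I do not expect a genuine obstacle here; the only thing to be careful about is matching the two descriptions of $Q_n$ (the explicit product \eqref{eq:C17}/\eqref{eq:f2} versus the recursive form in the statement) and handling the base case. If one prefers a self-contained argument not citing \thmref{kac}, the same identity follows by a one-line induction on $n$: from $T_n=T_{n-1}-Q_n$ one gets $T_{n-1}=1-\sum_{j=0}^{n-1}Q_j$ directly, and then left-multiplies by $P_n$; but routing through \thmref{kac} is the most economical presentation.
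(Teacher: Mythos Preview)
Your argument is correct. It differs from the paper's proof in that you invoke the already-established identity \eqref{eq:cc7} from \thmref{kac}, rewriting $T_{n-1}=1-\sum_{j=0}^{n-1}Q_j$ and then left-multiplying by $P_n$. The paper instead gives a self-contained expansion: it peels off the factors of $Q_n=P_n(1-P_{n-1})P_{n-2}^{\perp}\cdots P_0^{\perp}$ one at a time, at each step splitting off a term $P_nQ_{k}$, and telescopes down to $P_n-\sum_{j=0}^{n-1}P_nQ_j$. Your route is shorter and exploits work already done; the paper's route is independent of \thmref{kac} and makes \lemref{qn} stand on its own. Substantively they are the same telescoping identity, just packaged differently.
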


\begin{proof}
By definition, we have
\begin{eqnarray*}
Q_{n} & = & P_{n}\left(1-P_{n-1}\right)P_{n-2}^{\perp}\cdots P_{0}^{\perp}\\
 & = & P_{n}P_{n-2}^{\perp}\cdots P_{0}^{\perp}-P_{n}P_{n-1}P_{n-2}^{\perp}\cdots P_{0}^{\perp}\\
 & = & P_{n}P_{n-2}^{\perp}\cdots P_{0}^{\perp}-P_{n}Q_{n-1}\\
 & = & P_{n}P_{n-3}^{\perp}\cdots P_{0}^{\perp}-P_{n}Q_{n-2}-P_{n}Q_{n-1}\\
 & \vdots\\
 & = & P_{n}P_{0}^{\perp}-P_{n}Q_{1}-P_{n}Q_{2}-\cdots-P_{n}Q_{n-1}\\
 & = & P_{n}-\sum\nolimits _{j=0}^{n-1}P_{n}Q_{j}\\
 & = & P_{n}\left(1-\sum\nolimits _{j=0}^{n-1}Q_{j}\right).
\end{eqnarray*}
\end{proof}
\begin{cor}
\label{cor:kg}The vectors $\left\{ g_{j}\right\} $ in (\ref{eq:C16})
are determined recursively by
\begin{align}
g_{0} & =e_{0}\\
g_{n} & =e_{n}-\sum_{j=0}^{n-1}\left\langle e_{j},e_{n}\right\rangle g_{j}.\label{eq:C20}
\end{align}
\end{cor}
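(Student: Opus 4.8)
The plan is to obtain the recursion directly from \lemref{qn} together with the rank-one calculus for the operators $|u\rangle\langle v|$ recorded above; I expect no new idea is needed, only careful bookkeeping of complex conjugates. First I would note that the vector $g_j$ in \eqref{eq:C16} is genuinely well-defined: by \eqref{eq:f2} each $Q_j$ is rank-one with range contained in $\mathrm{span}\{e_j\}$, and since $e_j\neq 0$, the relation $|e_j\rangle\langle g|=|e_j\rangle\langle g'|$ forces $\langle g-g',x\rangle\,e_j=0$ for every $x\in\mathscr{H}$, hence $g=g'$. So it will suffice to exhibit, for each $n$, \emph{one} vector $h$ with $Q_n=|e_n\rangle\langle h|$ and then conclude $g_n=h$. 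The base case $n=0$ is immediate from $Q_0=P_0=|e_0\rangle\langle e_0|$, giving $g_0=e_0$.

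For $n\geq 1$ I would substitute $P_n=|e_n\rangle\langle e_n|$ and $Q_j=|e_j\rangle\langle g_j|$ (for $0\le j\le n-1$, by \eqref{eq:C16}) into the identity $Q_n=P_n\bigl(1-\sum_{j=0}^{n-1}Q_j\bigr)$ of \lemref{qn}, and expand using the composition rule $(|u_1\rangle\langle v_1|)(|u_2\rangle\langle v_2|)=\langle v_1,u_2\rangle\,|u_1\rangle\langle v_2|$ and the conjugate-linearity of $v\mapsto\langle v|$ (equivalently $\lambda|u\rangle\langle v|=|u\rangle\langle\bar\lambda v|$ and $|u\rangle\langle v_1|+|u\rangle\langle v_2|=|u\rangle\langle v_1+v_2|$, both consequences of the identities listed above) to collect everything into a single ket--bra:
\begin{align*}
Q_n &= |e_n\rangle\langle e_n| - \sum_{j=0}^{n-1}\bigl(|e_n\rangle\langle e_n|\bigr)\bigl(|e_j\rangle\langle g_j|\bigr)
   = |e_n\rangle\langle e_n| - \sum_{j=0}^{n-1}\langle e_n,e_j\rangle\,|e_n\rangle\langle g_j|\\
 &= \bigl|e_n\bigr\rangle\Bigl\langle e_n - \sum_{j=0}^{n-1}\overline{\langle e_n,e_j\rangle}\,g_j\Bigr| .
\end{align*}
Since $\overline{\langle e_n,e_j\rangle}=\langle e_j,e_n\rangle$, the uniqueness of the second argument in \eqref{eq:C16} noted above then yields $g_n=e_n-\sum_{j=0}^{n-1}\langle e_j,e_n\rangle\,g_j$, which is exactly \eqref{eq:C20}. (If one prefers, the same step can be organized as an induction on $n$, though \eqref{eq:C16} already supplies the needed form of $Q_j$ for all $j$, so induction is not strictly necessary.)

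I do not anticipate a real obstacle: \lemref{qn} already packages the recursive structure, so the corollary reduces to a single computation in the Dirac calculus. The one place that genuinely requires attention is the conjugation: the composition rule delivers the scalar coefficient $\langle e_n,e_j\rangle$, and moving it inside the bra replaces it by $\overline{\langle e_n,e_j\rangle}=\langle e_j,e_n\rangle$, which is why the coefficients in \eqref{eq:C20} are written with $e_j$ in the first slot. As a cross-check I would also evaluate both sides on an arbitrary $x\in\mathscr{H}$: from $Q_jx=\langle g_j,x\rangle e_j$ one gets $Q_nx=\bigl(\langle e_n,x\rangle-\sum_{j=0}^{n-1}\langle e_n,e_j\rangle\langle g_j,x\rangle\bigr)e_n$, and comparing with $Q_nx=\langle g_n,x\rangle e_n$ for all $x$ recovers the same recursion after one conjugation, confirming the bookkeeping.
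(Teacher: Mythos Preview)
Your argument is correct and follows essentially the same route as the paper: both invoke \lemref{qn} to write $Q_n=P_n-\sum_{j=0}^{n-1}P_nQ_j$, substitute the rank-one forms $P_n=|e_n\rangle\langle e_n|$ and $Q_j=|e_j\rangle\langle g_j|$, and then read off $g_n$ (the paper does this by evaluating on a test vector $x$, which is precisely your cross-check). Your explicit remark on uniqueness of $g_j$ and the careful handling of the conjugation $\overline{\langle e_n,e_j\rangle}=\langle e_j,e_n\rangle$ are welcome clarifications.
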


\begin{proof}
For all $x\in\mathscr{H}$, it follows from \lemref{qn}, that
\begin{eqnarray*}
Q_{n}x & = & P_{n}x-\sum_{j=0}^{n-1}P_{n}Q_{j}x\\
 & \Updownarrow\\
e_{n}\left\langle g_{n},x\right\rangle  & = & e_{n}\left\langle e_{n},x\right\rangle -\sum_{j=0}^{n-1}e_{n}\left\langle e_{n},e_{j}\right\rangle \left\langle g_{j},x\right\rangle .
\end{eqnarray*}
That is, $g_{n}=e_{n}-\sum_{j=0}^{n-1}\left\langle e_{j},e_{n}\right\rangle g_{j}$.
\end{proof}
\begin{cor}
\label{cor:ke}Assume $\left\{ \left|e_{j}\left\rangle \right\langle e_{j}\right|\right\} _{j\in\mathbb{N}_{0}}$
is effective, and let $Q_{j}=\left|e_{j}\left\rangle \right\langle g_{j}\right|$
be as above. Then, for all $x\in\mathscr{H}$, we have 
\begin{equation}
x=\sum_{j\in\mathbb{N}_{0}}\left\langle g_{j},x\right\rangle e_{j}.\label{eq:C21}
\end{equation}
In particular, for all $A\in\mathscr{B}\left(\mathscr{H}\right)$,
then
\begin{equation}
Ax=\sum_{j\in\mathbb{N}_{0}}\left\langle A^{*}g_{j},x\right\rangle e_{j}.
\end{equation}

Moreover, for all $x,y\in\mathscr{H}$, 
\begin{align*}
\left\langle x,y\right\rangle  & =\sum_{j\in\mathbb{N}_{0}}\left\langle x,g_{j}\right\rangle \left\langle g_{j},y\right\rangle ,\;\text{and}\\
\left\Vert x\right\Vert ^{2} & =\sum_{j\in\mathbb{N}_{0}}\left|\left\langle g_{j},x\right\rangle \right|^{2}.
\end{align*}
\end{cor}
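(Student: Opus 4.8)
The plan is to obtain all four identities as immediate specializations of \corref{gf} to the rank-one setting recorded in (\ref{eq:C16}). First I would note that, since the system $\left\{\left|e_{j}\left\rangle \right\langle e_{j}\right|\right\}_{j\in\mathbb{N}_{0}}$ is assumed effective, \corref{gf} applies verbatim to the associated operators $Q_{j}$, so that $x=\sum_{j\in\mathbb{N}_{0}}Q_{j}x$ for every $x\in\mathscr{H}$, with convergence in the norm of $\mathscr{H}$. By (\ref{eq:C16}) we have $Q_{j}x=\left\langle g_{j},x\right\rangle e_{j}$; substituting this into the series gives (\ref{eq:C21}) at once, and no separate convergence argument is needed because the series is exactly the one already controlled by Corollaries \ref{cor:fi} and \ref{cor:gf}.

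For the operator statement I would simply apply the identity (\ref{eq:C21}), now established, to the vector $Ax$ in place of $x$: this yields $Ax=\sum_{j\in\mathbb{N}_{0}}\left\langle g_{j},Ax\right\rangle e_{j}$, and moving $A$ across the inner product — legitimate since $A\in\mathscr{B}\left(\mathscr{H}\right)$ — rewrites $\left\langle g_{j},Ax\right\rangle$ as $\left\langle A^{*}g_{j},x\right\rangle$, which is the claimed formula. Again the sum is the convergent one coming from (\ref{eq:C21}), so nothing further is required.

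For the two Parseval-type identities I would invoke the sesquilinear form of \corref{gf}, namely $\left\langle x,y\right\rangle=\sum_{j\in\mathbb{N}_{0}}\left\langle Q_{j}x,Q_{j}y\right\rangle$. Using $Q_{j}x=\left\langle g_{j},x\right\rangle e_{j}$ and $Q_{j}y=\left\langle g_{j},y\right\rangle e_{j}$, each summand equals $\overline{\left\langle g_{j},x\right\rangle}\,\left\langle g_{j},y\right\rangle\left\Vert e_{j}\right\Vert^{2}=\left\langle x,g_{j}\right\rangle\left\langle g_{j},y\right\rangle$, and specializing $y=x$ gives $\left\Vert x\right\Vert^{2}=\sum_{j\in\mathbb{N}_{0}}\left|\left\langle g_{j},x\right\rangle\right|^{2}$.

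I do not expect a genuine obstacle here: the corollary is essentially a dictionary entry translating \corref{gf} into the rank-one language of (\ref{eq:C16}), with the vectors $g_{j}$ supplied recursively by \corref{kg}. The only point that deserves a line of care is that the unit-norm normalization $\left\Vert e_{j}\right\Vert=1$ is precisely what makes the cross terms $\left\langle Q_{j}x,Q_{j}y\right\rangle$ collapse to $\left\langle x,g_{j}\right\rangle\left\langle g_{j},y\right\rangle$ with no residual scalar factor; effectiveness is used only through its one consequence, $x=\sum_{j}Q_{j}x$, and all convergence is inherited, not re-proved.
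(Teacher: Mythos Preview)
Your proof is correct and follows essentially the same approach as the paper: both arguments specialize \corref{gf} to the rank-one setting via (\ref{eq:C16}). The paper's proof is terser, computing $Q_{j}^{*}Q_{j}=\left|g_{j}\left\rangle\right\langle g_{j}\right|$ directly and invoking $\sum_{j}Q_{j}^{*}Q_{j}=1$ for the inner-product identity, whereas you unpack $\left\langle Q_{j}x,Q_{j}y\right\rangle$ termwise; these are the same computation, and your version has the virtue of addressing all four identities explicitly rather than leaving (\ref{eq:C21}) and the operator formula implicit.
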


\begin{proof}
By assumption, $Q_{j}^{*}Q_{j}=\left|g_{j}\left\rangle \right\langle g_{j}\right|$,
hence 
\[
\left\langle x,y\right\rangle =\sum_{j\in\mathbb{N}_{0}}\left\langle x,Q_{j}^{*}Q_{j}y\right\rangle =\sum_{j\in\mathbb{N}_{0}}\left\langle x,g_{j}\right\rangle \left\langle g_{j},y\right\rangle .
\]
\end{proof}
\begin{cor}
\label{cor:ke2}The system $\left\{ \left|e_{j}\left\rangle \right\langle e_{j}\right|\right\} _{j\in\mathbb{N}_{0}}$
is effective iff $\left\{ g_{j}\right\} _{j\in\mathbb{N}_{0}}$ is
a Parseval frame in $\mathscr{H}$.
\end{cor}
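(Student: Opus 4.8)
The plan is to read the equivalence directly off the operator identity \eqref{eq:cc6} in \thmref{kac}, once it is specialized to rank-one projections. First I would compute $Q_{j}^{*}Q_{j}$ explicitly: since $Q_{j}=\left|e_{j}\left\rangle \right\langle g_{j}\right|$ by \eqref{eq:C16}, the composition and adjoint rules for Dirac rank-one operators recorded just before this subsection give
\[
Q_{j}^{*}Q_{j}=\left|g_{j}\left\rangle \right\langle e_{j}\right|\left|e_{j}\left\rangle \right\langle g_{j}\right|=\left\langle e_{j},e_{j}\right\rangle _{\mathscr{H}}\left|g_{j}\left\rangle \right\langle g_{j}\right|=\left|g_{j}\left\rangle \right\langle g_{j}\right|,
\]
where the last equality uses $\left\Vert e_{j}\right\Vert =1$. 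Substituting into \eqref{eq:cc6} and pairing against a vector $x\in\mathscr{H}$ then yields, for every $n\in\mathbb{N}_{0}$,
\[
\left\Vert x\right\Vert ^{2}-\left\Vert T_{n}x\right\Vert ^{2}=\sum_{j=0}^{n}\left|\left\langle g_{j},x\right\rangle \right|^{2}.
\]

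Next I would note that, since $T_{n}^{*}T_{n}\geq0$, the partial sums $s_{n}(x):=\sum_{j=0}^{n}\left|\left\langle g_{j},x\right\rangle \right|^{2}$ are nondecreasing in $n$ and bounded above by $\left\Vert x\right\Vert ^{2}$; hence they converge for every $x$, and $\lim_{n}s_{n}(x)=\left\Vert x\right\Vert ^{2}$ if and only if $\left\Vert T_{n}x\right\Vert \to0$. The two implications are then immediate. If $\left\{ \left|e_{j}\left\rangle \right\langle e_{j}\right|\right\} _{j\in\mathbb{N}_{0}}$ is effective, then $T_{n}\to0$ in SOT, so $\left\Vert T_{n}x\right\Vert \to0$ for all $x$, whence $\sum_{j\in\mathbb{N}_{0}}\left|\left\langle g_{j},x\right\rangle \right|^{2}=\left\Vert x\right\Vert ^{2}$ for all $x$; this is exactly the statement that $\left\{ g_{j}\right\} $ is a Parseval frame. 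Conversely, if $\left\{ g_{j}\right\} $ is a Parseval frame, then $s_{n}(x)\to\left\Vert x\right\Vert ^{2}$ for every $x$, so $\left\Vert T_{n}x\right\Vert ^{2}=\left\Vert x\right\Vert ^{2}-s_{n}(x)\to0$, i.e. $T_{n}\to0$ in SOT, which is effectiveness.

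There is essentially no obstacle here beyond the rank-one bookkeeping; all the content is carried by \thmref{kac}, and the one point that needs to be stated cleanly is that the Parseval identity for $\left\{ g_{j}\right\} $ is precisely the vanishing of $\lim_{n}\left\Vert T_{n}x\right\Vert ^{2}$, the convergence of the partial sums $s_{n}(x)$ being automatic from positivity of $T_{n}^{*}T_{n}$. As a consistency remark (not needing separate proof), a Parseval frame is automatically total — $\left\langle g_{j},x\right\rangle =0$ for all $j$ forces $x=0$ — which matches effectiveness forcing $T_{n}\to0$; but this totality follows from the Parseval identity rather than being an extra hypothesis, so it requires no separate verification.
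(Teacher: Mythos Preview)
Your argument is correct and is essentially the approach implicit in the paper: the paper states \corref{ke2} without a separate proof, treating it as immediate from \thmref{kac} and the identity $Q_{j}^{*}Q_{j}=\left|g_{j}\left\rangle \right\langle g_{j}\right|$ already used in the proof of \corref{ke}, together with \corref{fi}. Your direct use of the finite identity \eqref{eq:cc6} to get $\left\Vert x\right\Vert ^{2}-\left\Vert T_{n}x\right\Vert ^{2}=\sum_{j=0}^{n}\left|\left\langle g_{j},x\right\rangle \right|^{2}$ makes the equivalence completely transparent and is exactly what the paper's setup is designed to yield.
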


\begin{rem}
\label{rem:nf}We note that when $\mu$ is slice singular, then the
Fourier frequencies $\left\{ e_{n}\right\} _{n\in\mathbb{N}_{0}}$
is effective in $L^{2}\left(\mu\right)$, and every $f\in L^{2}\left(\mu\right)$
has Fourier series expansion.

This conclusion is based on (\ref{eq:C20}) and (\ref{eq:C21}) from
Corollaries \ref{cor:kg} \& \ref{cor:ke}. In more detail: Assume
$\mu$ is slice singular, and take $\mathscr{H}=L^{2}\left(\mu\right)$.
We may then think of \corref{ke} as a (generalized) Fourier expansion
result since every $f$ in the specified $L^{2}\left(\mu\right)$
space admits a non-orthogonal Fourier expansion in terms of explicit
coefficients and the standard Fourier functions $e_{n}$. Indeed,
the corresponding generalized Fourier coefficients are computed with
the use of the functions $g_{n}$ of the Kaczmarz algorithm, see eq.
(\ref{eq:C21}) and \corref{kg}.

We stress that while the coefficients in the expansion for $f$ are
explicitly given in (\ref{eq:C21}), this is nonetheless a non-orthogonal
expansion; see also \cite{2016arXiv160308852H,MR3796641}.
\end{rem}

\subsection{\label{subsec:rkac}Random Kaczmarz constructions and sequences of
projections}

In the discussion below, the word ``\emph{random}'' will refer to
a fixed probability space $\left(\Omega,\mathscr{F},\mathbb{P}\right)$,
where $\Omega$ is a set (sample space), $\mathscr{F}$ is a $\sigma$-algebra
(specified events), and $\mathbb{P}$ is a probability measure defined
on $\mathscr{F}$. \emph{Random variables} will then be measurable
functions on $\left(\Omega,\mathscr{F}\right)$. For example, if $\xi:\Omega\rightarrow\mathscr{B}\left(\mathscr{H}\right)$
is an operator valued random variable, measurability will then refer
to the $\sigma$-algebra of subsets in $\mathscr{B}\left(\mathscr{H}\right)$
which are w.r.t. the usual operator topology.

Equivalently, $\xi:\Omega\rightarrow\mathscr{B}\left(\mathscr{H}\right)$
is a random variable iff (Def.) for all pairs of vectors $x,y\in\mathscr{H}$,
then the functions 
\begin{equation}
\Omega\longrightarrow\mathbb{C},\quad\omega\longmapsto\left\langle x,\xi\left(\omega\right)y\right\rangle _{\mathscr{H}}
\end{equation}
are measurable w.r.t. the standard Borel $\sigma$-algebra $\mathscr{B}_{\mathbb{C}}$
of subsets of $\mathbb{C}$.

Given a probability space $\left(\Omega,\mathscr{F},\mathbb{P}\right)$
we shall denote the corresponding expectation $\mathbb{E}$, i.e.,
\begin{equation}
\mathbb{E}\left(\cdots\right)\overset{\text{Def.}}{=}\int_{\Omega}\left(\cdots\right)d\mathbb{P}.
\end{equation}
\\

\thmref{rkac} below is a stochastic variant of the classical Kaczmarz
algorithm; also see \thmref{kac}. For recent development and applications,
we refer to \cite{MR2113344,MR2721177,MR3210983,MR3450541,MR3345342,MR3424852,MR3439812,MR3847751,MR3796634}.

Let $\mathscr{H}$ be a Hilbert space. Given a family of selfadjoint
projections $\left\{ P_{j}\right\} _{j\in\mathbb{N}_{0}}$ in $\mathscr{H}$,
let $\xi:\Omega\rightarrow\mathscr{B}\left(\mathscr{H}\right)$ be
a random variable, such that 
\begin{equation}
\mathbb{P}\left(\xi=P_{j}\right)=p_{j},\;j\in\mathbb{N}_{0},\label{eq:rm1}
\end{equation}
where $p_{j}>0$, and $\sum_{j\in\mathbb{N}_{0}}p_{j}=1$.

Suppose further that there exists a constant $C$, $0<C<1$, such
that 
\begin{equation}
\mathbb{E}\left[\left\Vert \xi x\right\Vert ^{2}\right]:=\sum\nolimits _{j\in\mathbb{N}_{0}}p_{j}\left\Vert P_{j}x\right\Vert ^{2}\geq C\left\Vert x\right\Vert ^{2},\;\forall x\in\mathscr{H}.\label{eq:rm2}
\end{equation}

\begin{defn}
Let $\xi$, $\eta:\Omega\rightarrow\mathscr{B}\left(\mathscr{H}\right)$
be two operator-valued random variables. We say $\xi$ and $\eta$
are \emph{independent} iff (Def.) for all $x,y\in\mathscr{H}$, the
scalar valued random variables $\left\langle x,\xi y\right\rangle $,
and $\left\langle x,\eta y\right\rangle $ are independent.
\end{defn}

We shall use the standard abbreviation i.i.d. for independent, identically
distributed; also in the case of an indexed family of operator valued
random variables. In the present case, the common distribution is
specified by fixing the data in (\ref{eq:rm1}).

The key feature of our present \emph{randomized} Kaczmarz algorithm
is that it outputs a recursively generated sequence of operator valued
random variables; see (\ref{eq:rm3}) and (\ref{eq:rm4}). Each output,
in turn, will be a product of a specified i.i.d. system of projection
valued random variables. The latter i.i.d. system serves as input
into the algorithm.
\begin{thm}
\label{thm:rkac}Let $\left\{ \xi_{j}\right\} _{j\in\mathbb{N}_{0}}$
be an i.i.d. realization of $\xi$ from (\ref{eq:rm1}). Fix $\xi_{0}=P_{0}$,
and set 
\begin{align}
T_{n} & =\left(1-\xi_{n}\right)\left(1-\xi_{n-1}\right)\cdots\left(1-\xi_{0}\right),\;\text{and}\label{eq:rm3}\\
Q_{n} & =\xi_{n}\left(1-\xi_{n-1}\right)\cdots\left(1-\xi_{0}\right),\;Q_{0}=\xi_{0}.\label{eq:rm4}
\end{align}
Note that each product in (\ref{eq:rm3}) and (\ref{eq:rm4}) is an
operator-valued random variable.

Then, for all $x\in\mathscr{H}$, we have:
\begin{equation}
\lim_{n\rightarrow\infty}\mathbb{E}\text{\ensuremath{\left[\left\Vert T_{n}x\right\Vert ^{2}\right]}}=0.\label{eq:rm5}
\end{equation}
\end{thm}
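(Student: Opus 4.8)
The plan is to exploit the fact that, for a \emph{fixed} realization $(\xi_0,\xi_1,\dots)$, the operators $T_n$ and $Q_n$ satisfy exactly the algebraic identities of \thmref{kac}, since nothing in that theorem used anything beyond $\xi_j=\xi_j^*=\xi_j^2$. In particular, for every $\omega$ and every $x\in\mathscr H$,
\begin{equation*}
\|x\|^2 - \|T_n x\|^2 = \bigl\langle x,(1-T_n^*T_n)x\bigr\rangle = \sum_{j=0}^n \|Q_j x\|^2,
\end{equation*}
so $\mathbb E\bigl[\|T_n x\|^2\bigr] = \|x\|^2 - \sum_{j=0}^n \mathbb E\bigl[\|Q_j x\|^2\bigr]$. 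Thus $n\mapsto \mathbb E\bigl[\|T_n x\|^2\bigr]$ is nonincreasing and bounded below by $0$, hence convergent; the task reduces to showing the limit is $0$, equivalently that $\sum_{j\ge 0}\mathbb E\bigl[\|Q_j x\|^2\bigr] = \|x\|^2$.

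The key step is a one-step contraction estimate using independence. Conditioning on $\mathscr F_{n-1}:=\sigma(\xi_0,\dots,\xi_{n-1})$ and using that $\xi_n$ is independent of $T_{n-1}$ with $\mathbb P(\xi_n=P_j)=p_j$, one computes
\begin{align*}
\mathbb E\bigl[\|T_n x\|^2 \mid \mathscr F_{n-1}\bigr]
&= \mathbb E\bigl[\|(1-\xi_n)T_{n-1}x\|^2 \mid \mathscr F_{n-1}\bigr]\\
&= \sum_{j\in\mathbb N_0} p_j\,\bigl\|(1-P_j)T_{n-1}x\bigr\|^2\\
&= \|T_{n-1}x\|^2 - \sum_{j\in\mathbb N_0} p_j\,\|P_j T_{n-1}x\|^2\\
&\le (1-C)\,\|T_{n-1}x\|^2,
\end{align*}
where the last inequality is precisely hypothesis (\ref{eq:rm2}) applied to the vector $T_{n-1}x$ (using $\|(1-P_j)y\|^2 = \|y\|^2 - \|P_j y\|^2$). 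Taking expectations gives $\mathbb E\bigl[\|T_n x\|^2\bigr]\le (1-C)\,\mathbb E\bigl[\|T_{n-1}x\|^2\bigr]$, and since $\|T_0 x\|^2 = \|(1-P_0)x\|^2\le \|x\|^2$, iterating yields $\mathbb E\bigl[\|T_n x\|^2\bigr]\le (1-C)^n\|x\|^2 \to 0$ because $0<C<1$. This also delivers the geometric error estimate that the accompanying corollary presumably records.

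The main obstacle is the measure-theoretic bookkeeping needed to justify the conditional-expectation computation: one must check that $T_{n-1}$ is $\mathscr F_{n-1}$-measurable as a $\mathscr B(\mathscr H)$-valued random variable, that $\xi_n$ is independent of $\mathscr F_{n-1}$ in the operator sense of the paper's definition, and that $\mathbb E\bigl[f(\xi_n)\,\|(1-\xi_n)y\|^2\bigr]$ for a fixed vector $y$ reduces to the sum $\sum_j p_j\|(1-P_j)y\|^2$ — i.e., that the map $\omega\mapsto \|(1-\xi_n(\omega))T_{n-1}(\omega)x\|^2$ is genuinely measurable and that the "pull out what is known" property applies. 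Since each $\xi_j$ takes only countably many values $\{P_j\}$, all of this can be handled by decomposing $\Omega$ along the (countable) partition determined by $(\xi_0,\dots,\xi_{n-1})$ and summing, which turns the operator-valued conditioning into an elementary discrete computation; I would present it that way to avoid invoking a general vector-valued conditional expectation machinery. Once that is in place, the rest is the short algebra above.
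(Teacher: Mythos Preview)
Your proposal is correct and follows essentially the same route as the paper: both arguments derive the one-step contraction $\mathbb{E}\bigl[\|T_n x\|^2 \mid \xi_0,\dots,\xi_{n-1}\bigr]\le (1-C)\|T_{n-1}x\|^2$ from hypothesis~(\ref{eq:rm2}) and the projection identity $\|(1-\xi_n)y\|^2=\|y\|^2-\|\xi_n y\|^2$, then iterate to obtain the geometric bound $(1-C)^n$. Your preliminary monotonicity observation via \thmref{kac} is correct but superfluous, since the contraction estimate alone already gives convergence; and your careful discussion of the measurability bookkeeping is more explicit than the paper's, which simply writes the conditioning as $\mathbb{E}_{\xi_1,\dots,\xi_{n-1}}[\cdot]$ without further comment.
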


\begin{proof}
For all $x\in\mathscr{H}$, we have
\[
T_{n}x=T_{n-1}x-\xi_{n}T_{n-1}x.
\]
But each $\xi_{n}$ is a random variable taking values in the set
of selfadjoint projections, as specified in (\ref{eq:rm1}), and so
\[
\left\Vert T_{n}x\right\Vert ^{2}=\left\Vert T_{n-1}x\right\Vert ^{2}-\left\Vert \xi_{n}T_{n-1}x\right\Vert ^{2}.
\]
It follows from (\ref{eq:rm2}) that
\begin{align*}
\mathbb{E}_{\xi_{1},\cdots,\xi_{n-1}}\left[\left\Vert T_{n}x\right\Vert ^{2}\right] & =\mathbb{E}_{\xi_{1},\cdots,\xi_{n-1}}\left[\left\Vert T_{n-1}x\right\Vert ^{2}\right]-\mathbb{E}_{\xi_{1},\cdots,\xi_{n-1}}\left[\left\Vert \xi_{n}T_{n-1}x\right\Vert ^{2}\right]\\
 & \leq\left\Vert T_{n-1}x\right\Vert ^{2}\left(1-C\right).
\end{align*}
Therefore, by taking expectation again, we get
\begin{eqnarray*}
\mathbb{E}\left[\left\Vert T_{n}x\right\Vert ^{2}\right] & \leq & \mathbb{E}\left[\left\Vert T_{n-1}x\right\Vert ^{2}\right]\left(1-C\right)\\
 & \leq & \mathbb{E}\left[\left\Vert T_{n-2}x\right\Vert ^{2}\right]\left(1-C\right)^{2}\\
 & \vdots\\
 & \leq & \mathbb{E}\left[\left\Vert T_{0}x\right\Vert ^{2}\right]\left(1-C\right)^{n}\\
 & = & \left\Vert x_{0}-x\right\Vert ^{2}\left(1-C\right)^{n}\rightarrow0,\;n\rightarrow\infty.
\end{eqnarray*}
\end{proof}
\begin{cor}
\label{cor:rkac}Let $T_{n}$ and $Q_{n}$ be as in (\ref{eq:rm3})--(\ref{eq:rm4}),
then the following hold.
\begin{enumerate}
\item For all $x\in\mathscr{H}$,
\begin{equation}
\lim_{n\rightarrow\infty}\mathbb{E}\left[\left\Vert x-\sum\nolimits _{j=0}^{n}Q_{j}x\right\Vert ^{2}\right]=0.\label{eq:rm6}
\end{equation}
\item For all $x,y\in\mathscr{H}$, 
\begin{equation}
\lim_{n\rightarrow\infty}\mathbb{E}\left[\left|\left\langle x,y\right\rangle -\sum\nolimits _{j=0}^{n}\left\langle x,Q_{j}^{*}Q_{j}y\right\rangle \right|^{2}\right]=0.\label{eq:rm7}
\end{equation}
\end{enumerate}
\end{cor}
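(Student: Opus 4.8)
The plan is to derive both statements of \corref{rkac} directly from the identities in \thmref{kac} together with the convergence estimate \eqref{eq:rm5} of \thmref{rkac}. The key observation is that the algebraic identities \eqref{eq:cc6} and \eqref{eq:cc7} from \thmref{kac} were proved purely formally — they hold for any sequence of selfadjoint projections, and in particular for the random projections $\xi_{0},\xi_{1},\dots,\xi_{n}$, so that for each fixed $\omega\in\Omega$ we have the pointwise identities $1-T_{n}=\sum_{j=0}^{n}Q_{j}$ and $1-T_{n}^{*}T_{n}=\sum_{j=0}^{n}Q_{j}^{*}Q_{j}$, where now $T_{n},Q_{n}$ are the operator-valued random variables defined in \eqref{eq:rm3}--\eqref{eq:rm4}.

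For part (i), I would simply note that by the identity \eqref{eq:cc7} applied $\omega$-wise,
\[
x-\sum_{j=0}^{n}Q_{j}x=\bigl(1-(1-T_{n})\bigr)x=T_{n}x
\]
for every $\omega$, so that $\bigl\|x-\sum_{j=0}^{n}Q_{j}x\bigr\|^{2}=\|T_{n}x\|^{2}$ identically, and therefore $\mathbb{E}\bigl[\|x-\sum_{j=0}^{n}Q_{j}x\|^{2}\bigr]=\mathbb{E}\bigl[\|T_{n}x\|^{2}\bigr]\to0$ by \eqref{eq:rm5}. This step is essentially immediate.

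For part (ii), I would use \eqref{eq:cc6} $\omega$-wise to write, for $x,y\in\mathscr{H}$,
\[
\langle x,y\rangle-\sum_{j=0}^{n}\langle x,Q_{j}^{*}Q_{j}y\rangle=\langle x,(1-\textstyle\sum_{j=0}^{n}Q_{j}^{*}Q_{j})y\rangle=\langle x,T_{n}^{*}T_{n}y\rangle=\langle T_{n}x,T_{n}y\rangle,
\]
and then estimate via Cauchy--Schwarz: $|\langle T_{n}x,T_{n}y\rangle|^{2}\le\|T_{n}x\|^{2}\|T_{n}y\|^{2}$. To pass to expectations I would need that $\mathbb{E}\bigl[\|T_{n}x\|^{2}\|T_{n}y\|^{2}\bigr]\to0$; here the cleanest route is to use the operator-norm bound $\|T_{n}\|\le1$ (a product of operators each of norm $\le1$, since $1-\xi_{j}$ is a projection), which gives $\|T_{n}y\|^{2}\le\|y\|^{2}$ pointwise, hence $\mathbb{E}\bigl[\|T_{n}x\|^{2}\|T_{n}y\|^{2}\bigr]\le\|y\|^{2}\,\mathbb{E}\bigl[\|T_{n}x\|^{2}\bigr]\to0$ by \eqref{eq:rm5}. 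Combining, $\mathbb{E}\bigl[|\langle x,y\rangle-\sum_{j=0}^{n}\langle x,Q_{j}^{*}Q_{j}y\rangle|^{2}\bigr]\le\|y\|^{2}\,\mathbb{E}\bigl[\|T_{n}x\|^{2}\bigr]\to0$, which is \eqref{eq:rm7}.

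The only point requiring a little care — and the main (minor) obstacle — is the passage from the scalar $L^{2}$-convergence \eqref{eq:rm5} to the product-type expectation in part (ii); the trick is precisely to avoid needing any independence or fourth-moment control by exploiting the uniform contraction bound $\|T_{n}\|\le1$ to reduce a product of two norms to a single one. Everything else is a direct consequence of the deterministic identities of \thmref{kac} read pointwise in $\omega$.
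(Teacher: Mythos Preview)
Your argument is correct. Part~(i) is exactly the paper's proof: combine the pointwise identity \eqref{eq:cc7} with \eqref{eq:rm5}.

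For part~(ii) the paper takes a slightly different route. It first treats the diagonal case $x=y$, where \eqref{eq:cc6} gives $\langle x,x\rangle-\sum_{j=0}^{n}\langle x,Q_{j}^{*}Q_{j}x\rangle=\|T_{n}x\|^{2}$, so that $\mathbb{E}\bigl[\langle x,T_{n}^{*}T_{n}x\rangle\bigr]\to 0$ by \eqref{eq:rm5}; it then appeals to the polarization identity to obtain the general case. Your approach---rewriting the inner quantity as $\langle T_{n}x,T_{n}y\rangle$ and applying Cauchy--Schwarz together with the uniform bound $\|T_{n}\|\le 1$---is a bit more direct for the stated $L^{2}(\Omega)$ convergence, because it handles the square immediately and makes explicit the contraction bound that the polarization route also needs (implicitly) once one squares the polarized expression. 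Both arguments are short and rest on the same ingredients: the $\omega$-wise identity \eqref{eq:cc6}, the estimate \eqref{eq:rm5}, and the fact that each $1-\xi_{j}$ is a projection so $\|T_{n}\|\le 1$.
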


\begin{proof}
The assertion (\ref{eq:rm6}) follows from (\ref{eq:rm5}) and (\ref{eq:cc7}).

By (\ref{eq:cc6}), we have $\left\Vert T_{n}x\right\Vert ^{2}=\left\langle x,T_{n}^{*}T_{n}x\right\rangle =\left\langle x,x\right\rangle -\sum_{j=0}^{n}\left\langle x,Q_{j}^{*}Q_{j}x\right\rangle $,
and so 
\[
\mathbb{E}\left[\left\langle x,T_{n}^{*}T_{n}x\right\rangle \right]\rightarrow0,\;n\rightarrow\infty.
\]
Now (\ref{eq:rm7}) follows from this and the polarization identity.
\end{proof}
\begin{rem}[Fusion frames, and measure frames]
 Our present equation (\ref{eq:rm2}) may be viewed as an instance
of what is now called \emph{fusion frames}, and developed extensively
by Casazza et al. \cite{MR2066823,MR2419707,MR2440135}, and by others.
In addition, we note that our present (\ref{eq:rm7}) is closely related
to a formulation a certain notion of \emph{measure frames}, see e.g.,
\cite{MR2147063,MR2964017,MR3526434,MR3688637}, and its extensions
in \cite{MR3800275}.
\end{rem}

\subsection{\label{subsec:axy}Solutions to $Ax=y$ in finite, and in infinite,
dimensional spaces}

A natural extension of the classical Kaczmarz algorithm is to solve
the equation 
\[
Ax=y,
\]
when $x,y$ are vectors in an infinite-dimensional Hilbert space $\mathscr{H}$,
and $A,A^{-1}$ are both bounded operators in $\mathscr{H}$; see
\figref{kap}.

Equivalently, when $\left\{ \varphi_{j}\right\} _{j\in\mathbb{N}}$
is an ONB (or a Parseval frame) in $\mathscr{H}$, we shall consider
the system of equations 
\begin{eqnarray*}
\left\langle \varphi_{j},Ax\right\rangle  & = & \left\langle \varphi_{j},y\right\rangle \\
 & \Updownarrow\\
\left\langle A^{*}\varphi_{j},x\right\rangle  & = & \left\langle \varphi_{j},y\right\rangle .
\end{eqnarray*}

\begin{question}
\label{que:Axy}Given the complex numbers $\left\langle A^{*}\varphi_{j},x\right\rangle $,
$j\in\mathbb{N}$, is it possible to recover $x$ using the Kaczmarz
method?
\end{question}

The closest analog to the finite-dimensional setting is the class
of Hilbert-Schmidt operators, and we shall recall the basics below.
\begin{defn}
Assume $\mathscr{H}$ is separable. $A:\mathscr{H}\rightarrow\mathscr{H}$
is \emph{Hilbert-Schmidt} iff (Def.) $\exists$ an ONB $\left\{ e_{i}\right\} _{i\in\mathbb{N}_{0}}$,
such that 
\begin{equation}
\sum\nolimits _{i}\left\Vert Ae_{i}\right\Vert ^{2}<\infty.
\end{equation}
We denote the set of all Hilbert-Schmidt operators in $\mathscr{H}$
by $HS\left(\mathscr{H}\right)$.
\end{defn}

Note that $A\in HS\left(\mathscr{H}\right)$ iff $A^{*}A$ is \emph{trace
class}, and for an ONB $\left\{ e_{i}\right\} $ , we have 
\begin{equation}
\sum\left\Vert Ae_{i}\right\Vert ^{2}=\sum\left\langle e_{i},A^{*}Ae_{i}\right\rangle =tr\left(A^{*}A\right).
\end{equation}

\begin{lem}
$HS\left(\mathscr{H}\right)$ $\simeq\mathscr{H}\otimes\overline{\mathscr{H}}$,
where $\overline{\mathscr{H}}$ denotes the conjugate Hilbert space.
\end{lem}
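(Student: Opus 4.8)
The plan is to exhibit an explicit unitary via Dirac's rank-one operators. Define a linear map $\Phi:\mathscr{H}\otimes\overline{\mathscr{H}}\rightarrow HS\left(\mathscr{H}\right)$ on elementary tensors by $\Phi\left(u\otimes\overline{v}\right)=\left|u\right\rangle\left\langle v\right|$. First I would check this is well defined: the assignment $\left(u,\overline{v}\right)\mapsto\left|u\right\rangle\left\langle v\right|$ is linear in $u$, and — because passing to the conjugate Hilbert space undoes the conjugate-linearity of the dependence on the ``bra'' vector — it is linear in $\overline{v}\in\overline{\mathscr{H}}$; hence it factors through the algebraic tensor product. Each $\left|u\right\rangle\left\langle v\right|$ is finite-rank, so it indeed lands in $HS\left(\mathscr{H}\right)$.

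Next I would compute inner products. On $HS\left(\mathscr{H}\right)$ the natural inner product is $\left\langle A,B\right\rangle_{HS}=tr\left(A^{*}B\right)$. Using the composition and adjoint rules for the Dirac calculus recorded above, $\left(\left|u_{1}\right\rangle\left\langle v_{1}\right|\right)^{*}\left(\left|u_{2}\right\rangle\left\langle v_{2}\right|\right)=\left|v_{1}\right\rangle\left\langle u_{1}\right|\,\left|u_{2}\right\rangle\left\langle v_{2}\right|=\left\langle u_{1},u_{2}\right\rangle\left|v_{1}\right\rangle\left\langle v_{2}\right|$, and a short computation against an ONB gives $tr\left(\left|v_{1}\right\rangle\left\langle v_{2}\right|\right)=\left\langle v_{2},v_{1}\right\rangle$. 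Therefore $\left\langle\left|u_{1}\right\rangle\left\langle v_{1}\right|,\left|u_{2}\right\rangle\left\langle v_{2}\right|\right\rangle_{HS}=\left\langle u_{1},u_{2}\right\rangle_{\mathscr{H}}\left\langle v_{2},v_{1}\right\rangle_{\mathscr{H}}=\left\langle u_{1},u_{2}\right\rangle_{\mathscr{H}}\left\langle\overline{v_{1}},\overline{v_{2}}\right\rangle_{\overline{\mathscr{H}}}$, which is exactly $\left\langle u_{1}\otimes\overline{v_{1}},u_{2}\otimes\overline{v_{2}}\right\rangle_{\mathscr{H}\otimes\overline{\mathscr{H}}}$. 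Thus $\Phi$ preserves inner products on the dense subspace of finite sums of elementary tensors and extends to an isometry of Hilbert spaces.

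For surjectivity I would fix an ONB $\left\{e_{i}\right\}$ of $\mathscr{H}$; then $\left\{e_{i}\otimes\overline{e_{j}}\right\}_{i,j}$ is an ONB of $\mathscr{H}\otimes\overline{\mathscr{H}}$, and by the computation above $\left\{\left|e_{i}\right\rangle\left\langle e_{j}\right|\right\}_{i,j}$ is an orthonormal family in $HS\left(\mathscr{H}\right)$. Its span is total there because $\left\langle\left|e_{i}\right\rangle\left\langle e_{j}\right|,A\right\rangle_{HS}=\left\langle e_{i},Ae_{j}\right\rangle$ for every $A\in HS\left(\mathscr{H}\right)$, so orthogonality to the whole family forces $A=0$. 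Hence $\Phi$ carries an ONB onto an ONB and is unitary; equivalently, being an isometry it has closed range, and that range contains every finite-rank operator, which are dense in $HS\left(\mathscr{H}\right)$.

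The only genuine subtlety — and the step to get right — is the bookkeeping of conjugate-linearity: it is precisely the passage to $\overline{\mathscr{H}}$ that converts the conjugate-linear dependence of $\left|u\right\rangle\left\langle v\right|$ on $v$ into an honest linear dependence on $\overline{v}$, which is what makes $\Phi$ a linear (not conjugate-linear) map and matches the two inner products without any stray complex conjugate. Everything else is a routine application of the Dirac identities and the trace formula already set up above.
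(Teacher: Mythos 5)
Your proposal is correct and follows essentially the same route as the paper: both rest on the Dirac calculus, the trace identity $tr\left(\left|u\left\rangle \right\langle v\right|\right)=\left\langle v,u\right\rangle$, and the totality of the matrix units $\left\{ \left|e_{i}\left\rangle \right\langle e_{j}\right|\right\}$ in $HS\left(\mathscr{H}\right)$ via $\left\langle \left|e_{i}\left\rangle \right\langle e_{j}\right|,A\right\rangle _{HS}=\left\langle e_{i},Ae_{j}\right\rangle$. You are in fact somewhat more complete than the paper's sketch, since you define the unitary explicitly on elementary tensors and track the conjugate-linearity through $\overline{\mathscr{H}}$, whereas the paper only records the orthonormal-total family; but the underlying argument is identical.
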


\begin{proof}
If $\left\{ e_{i}\right\} $ is an ONB, set $\left|e_{i}\left\rangle \right\langle e_{j}\right|$
w.r.t. the inner product 
\begin{equation}
\left(A,B\right)\longmapsto tr\left(A^{*}B\right),
\end{equation}
for all $A,B\in HS\left(\mathscr{H}\right)$. Hence, 
\begin{equation}
\left\langle A,B\right\rangle _{HS}=\sum_{i}\left\langle e_{i},A^{*}Be_{i}\right\rangle _{\mathscr{H}}\left(=tr\left(A^{*}B\right)\right).
\end{equation}

We shall show that 
\begin{equation}
HS\left(\mathscr{H}\right)\ominus\left\{ \left|e_{i}\left\rangle \right\langle e_{j}\right|\right\} _{\mathbb{N}_{0}\times\mathbb{N}_{0}}=0,\label{eq:hs4}
\end{equation}
i.e., $\left\{ A_{ij}:=\left|e_{i}\left\rangle \right\langle e_{j}\right|\right\} $
is \emph{total} in $HS\left(\mathscr{H}\right)$. To see this, note
that 
\[
tr\left(\left|u\left\rangle \right\langle v\right|\right)=\left\langle v,u\right\rangle _{\mathscr{H}},\;u,v\in\mathscr{H}.
\]
In fact, one checks that 
\begin{align*}
tr\left(\left|u\left\rangle \right\langle v\right|\right) & =\sum_{i}\left\langle e_{i},u\right\rangle \left\langle v,e_{i}\right\rangle \\
 & =\sum\left\langle v,u\right\rangle ,\;\text{by Parseval.}
\end{align*}
Now, if $B\in HS\left(\mathscr{H}\right)$, then 
\begin{align*}
\left\langle B,\left|e_{i}\left\rangle \right\langle e_{j}\right|\right\rangle _{HS} & =tr\left(B^{*}\left|e_{i}\left\rangle \right\langle e_{j}\right|\right)\\
 & =tr\left(\left|B^{*}e_{i}\left\rangle \right\langle e_{j}\right|\right)\\
 & =tr\left(\left|e_{i}\left\rangle \right\langle Be_{j}\right|\right)\\
 & =\left\langle Be_{j},e_{i}\right\rangle _{\mathscr{H}},\;\text{by \ensuremath{\left(\ref{eq:hs4}\right)}.}
\end{align*}
Therefore, if $\left\langle B,\left|e_{i}\left\rangle \right\langle e_{j}\right|\right\rangle _{HS}=0$,
for all $i,j\in\mathbb{N}_{0}$, then $B=0$; since 
\[
Be_{j}=\sum_{i}\left\langle e_{i},Be_{j}\right\rangle _{\mathscr{H}}e_{i}.
\]
\end{proof}
Now, back to \queref{Axy}. From earlier discussion, the answer depends
on whether the sequence $\left\{ A^{*}\varphi_{j}\right\} $ is effective.
In general, we do not get an effective sequence, even if $A$ is assumed
Hilbert-Schmidt. However, under certain conditions (see (\ref{eq:hp0}))
the \emph{random} Kaczmarz algorithm applies, and we get an approximate
sequence that converges to $x$ in expectation. See details below.
\begin{lem}
\label{lem:rax}Suppose $A$ is a bounded operator in $\mathscr{H}$
with bounded inverse. Fix a Parseval frame $\left\{ \varphi_{j}\right\} _{j\in\mathbb{N}}$
in $\mathscr{H}$, let $P_{j}$ be the projection onto $A^{*}\varphi_{j}$,
$j\in\mathbb{N}$.

Assume further that 
\begin{equation}
1/\left\Vert A^{-1}\right\Vert ^{2}<\sum\nolimits _{k}\left\Vert A^{*}\varphi_{k}\right\Vert ^{2}<\infty.\label{eq:hp0}
\end{equation}
Then, there exists a probability distribution $\left\{ p_{j}\right\} $
on $\left\{ P_{j}\right\} $, given by 
\begin{equation}
p_{j}=\left\Vert A^{*}\varphi_{j}\right\Vert ^{2}/\sum\nolimits _{k}\left\Vert A^{*}\varphi_{k}\right\Vert ^{2},\label{eq:hp}
\end{equation}
such that, for all $h\in\mathscr{H}$, 
\begin{equation}
\sum\nolimits _{j\in\mathbb{N}}p_{j}\left\Vert P_{j}h\right\Vert ^{2}\geq C\left\Vert h\right\Vert ^{2},\label{eq:hp1}
\end{equation}
where $C$ is a constant, $0<C<1$.
\end{lem}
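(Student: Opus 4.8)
The plan is to collapse the whole statement into one explicit identity for $\sum_j p_j\|P_j h\|^2$ and then apply, in turn, the Parseval-frame property of $\{\varphi_j\}$ and the lower bound coming from boundedness of $A^{-1}$. First I would record the explicit form of the rank-one projection: since $P_j$ is the orthogonal projection onto the line spanned by the (nonzero) vector $A^*\varphi_j$, one has $P_j h = \|A^*\varphi_j\|^{-2}\langle A^*\varphi_j,h\rangle A^*\varphi_j$, hence $\|P_j h\|^2 = |\langle A^*\varphi_j,h\rangle|^2/\|A^*\varphi_j\|^2$. (If some $\varphi_j$ happens to vanish, discard it; the remaining family is still a Parseval frame, and invertibility of $A$ forces $A^*\varphi_j\neq 0$ whenever $\varphi_j\neq 0$, so each $P_j$ is well defined and each $p_j>0$.)

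Next I would substitute the definition $p_j=\|A^*\varphi_j\|^2/S$, where $S:=\sum_k\|A^*\varphi_k\|^2\in(0,\infty)$ by hypothesis. The factor $\|A^*\varphi_j\|^2$ cancels, giving, for every $h\in\mathscr{H}$,
\[
\sum_{j}p_j\|P_j h\|^2=\frac{1}{S}\sum_{j}\bigl|\langle A^*\varphi_j,h\rangle\bigr|^2=\frac{1}{S}\sum_{j}\bigl|\langle \varphi_j,Ah\rangle\bigr|^2=\frac{\|Ah\|^2}{S},
\]
where the middle equality is the adjoint relation $\langle A^*\varphi_j,h\rangle=\langle\varphi_j,Ah\rangle$ and the last one is precisely the Parseval identity for the frame $\{\varphi_j\}$ applied to the vector $Ah$.

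To finish, I would bound $\|Ah\|$ from below using the bounded inverse: from $h=A^{-1}(Ah)$ we get $\|h\|\le\|A^{-1}\|\,\|Ah\|$, i.e. $\|Ah\|^2\ge\|h\|^2/\|A^{-1}\|^2$. Combining,
\[
\sum_{j}p_j\|P_j h\|^2\ge\frac{\|h\|^2}{\|A^{-1}\|^2\,S},\qquad h\in\mathscr{H},
\]
so the lemma holds with $C:=1/(\|A^{-1}\|^2 S)$. The hypothesis $1/\|A^{-1}\|^2<S<\infty$ in (\ref{eq:hp0}) is exactly what makes $0<C<1$, and $\sum_j p_j=1$ is immediate from the definition of $p_j$; hence $\{p_j\}$ is a genuine probability distribution satisfying the fusion-frame-type lower bound (\ref{eq:rm2}) needed to run \thmref{rkac}.

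As for the main obstacle: there is essentially no analytic difficulty here — the content is the clean cancellation of $\|A^*\varphi_j\|^2$ together with the two elementary estimates above. The only point needing care is bookkeeping: confirming that the normalization constant lands strictly between $0$ and $1$ (which is built into the two-sided inequality (\ref{eq:hp0})) and that the $P_j$ and $p_j$ are all genuinely defined and positive, which is handled by discarding any zero frame vectors.
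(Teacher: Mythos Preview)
Your proof is correct and follows essentially the same approach as the paper: both use the rank-one projection formula $\|P_j h\|^2=|\langle A^*\varphi_j,h\rangle|^2/\|A^*\varphi_j\|^2$, the Parseval identity $\sum_j|\langle\varphi_j,Ah\rangle|^2=\|Ah\|^2$, and the lower bound $\|Ah\|^2\ge\|h\|^2/\|A^{-1}\|^2$ to arrive at the same constant $C=1/(\|A^{-1}\|^2\sum_k\|A^*\varphi_k\|^2)$. The only cosmetic difference is that you work forward from $\sum_j p_j\|P_j h\|^2$ while the paper works backward from $\|h\|^2$; your added remark on discarding zero frame vectors is a nice point of rigor the paper omits.
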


\begin{proof}
For all $h\in\mathscr{H}$, we have: 
\begin{align*}
\left\Vert h\right\Vert ^{2} & =\left\Vert A^{-1}Ah\right\Vert ^{2}\\
 & \leq\left\Vert A^{-1}\right\Vert ^{2}\left\Vert Ah\right\Vert ^{2}\\
 & =\left\Vert A^{-1}\right\Vert ^{2}\sum\left|\left\langle \varphi_{j},Ah\right\rangle \right|^{2}=\left\Vert A^{-1}\right\Vert ^{2}\sum\left|\left\langle A^{*}\varphi_{j},h\right\rangle \right|^{2}\\
 & =\left\Vert A^{-1}\right\Vert ^{2}\sum\nolimits _{k}\left\Vert A^{*}\varphi_{k}\right\Vert ^{2}\sum\nolimits _{j}\underset{=p_{j}}{\underbrace{\frac{\left\Vert A^{*}\varphi_{j}\right\Vert ^{2}}{\sum_{k}\left\Vert A^{*}\varphi_{k}\right\Vert ^{2}}}}\left|\left\langle \frac{A^{*}\varphi_{j}}{\left\Vert A^{*}\varphi_{j}\right\Vert },h\right\rangle \right|^{2}\\
 & =\underset{=C^{-1}}{\underbrace{\left\Vert A^{-1}\right\Vert ^{2}\sum\nolimits _{k}\left\Vert A^{*}\varphi_{k}\right\Vert ^{2}}}\cdot\sum\nolimits _{j}p_{j}\left\Vert P_{j}h\right\Vert ^{2}.
\end{align*}
The desired conclusion follows from this.
\end{proof}
\begin{cor}
Let the setting be as in \lemref{rax}. An approximate solution to
$Ax=y$ is obtained recursively as follows:

Let $\xi:\Omega\rightarrow\mathscr{B}\left(\mathscr{H}\right)$ be
a random projection, s.t. $\mathbb{P}\left(\xi=P_{j}\right)=p_{j}$
(see (\ref{eq:hp})), and $\left\{ \xi_{j}\right\} $ be an i.i.d.
realization of $\xi$. Then, with $x_{0}\neq0$ fixed, and 
\begin{equation}
x_{j}:=x_{j-1}+\xi_{j}\left(x-x_{j-1}\right),\;j\in\mathbb{N},\label{eq:p1}
\end{equation}
we have:
\begin{equation}
\lim_{j\rightarrow\infty}\mathbb{E}\left[\left\Vert x_{j}-x\right\Vert ^{2}\right]=0.\label{eq:p2}
\end{equation}
Note that, in (\ref{eq:p1}) if $\xi=P_{k}$, then 
\[
\xi x=\frac{\left\langle A^{*}\varphi_{k},x\right\rangle }{\left\Vert A^{*}\varphi_{k}\right\Vert ^{2}}A^{*}\varphi_{k}=\frac{\left\langle \varphi_{k},y\right\rangle }{\left\Vert A^{*}\varphi_{k}\right\Vert ^{2}}A^{*}\varphi_{k}.
\]
\end{cor}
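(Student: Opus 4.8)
The plan is to apply \corref{rkac} after recognizing that the recursion \eqref{eq:p1} is exactly the randomized Kaczmarz recursion of \thmref{rkac}, and that hypothesis \eqref{eq:hp1} from \lemref{rax} is precisely the spectral-gap condition \eqref{eq:rm2} needed to run that theorem. First I would record that $A$ bounded with bounded inverse means the linear system $Ax=y$ has a unique solution $x=A^{-1}y$, so the quantity $x-x_{j-1}$ appearing in \eqref{eq:p1} is well-defined; by \lemref{rax}, under the hypothesis \eqref{eq:hp0} the projections $P_j$ onto the vectors $A^{*}\varphi_j$ together with the weights $p_j$ from \eqref{eq:hp} satisfy $\sum_j p_j \|P_j h\|^2 \geq C\|h\|^2$ for all $h\in\mathscr{H}$, with $0<C<1$. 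This is the content of \eqref{eq:rm2} with the random variable $\xi$ taking value $P_j$ with probability $p_j$; note $p_j>0$ for every $j$ (since $A^*\varphi_j$ cannot all be zero and in fact, because $A^*$ is injective and $\{\varphi_j\}$ is a Parseval frame, $A^*\varphi_j\neq 0$ whenever $\varphi_j\neq 0$) and $\sum_j p_j =1$ by construction.

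Next I would unfold the recursion. Setting $\xi_0=P_0$ and using \eqref{eq:p1}, one has by forward induction, exactly as in the Observation preceding \figref{kap}, that
\begin{equation*}
x-x_j = \left(1-\xi_j\right)\left(1-\xi_{j-1}\right)\cdots\left(1-\xi_0\right)\left(x-x_0\right) = T_j\left(x-x_0\right),
\end{equation*}
where $T_j$ is the operator-valued random variable of \eqref{eq:rm3}. Hence $\|x_j-x\|^2 = \|T_j(x-x_0)\|^2$, and applying \thmref{rkac} with the vector $x-x_0\in\mathscr{H}$ gives $\mathbb{E}\left[\|T_j(x-x_0)\|^2\right]\to 0$ as $j\to\infty$, which is precisely \eqref{eq:p2}. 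Alternatively, one can phrase this directly through \corref{rkac}\,(i): with $x_0$ the fixed starting vector, the partial sums $\sum_{k=0}^{j}Q_k(x-x_0)$ equal $(x-x_0)-T_j(x-x_0)$ by \eqref{eq:cc7}, and \eqref{eq:rm6} then yields $\mathbb{E}\left[\|(x-x_0) - \sum_{k\le j}Q_k(x-x_0)\|^2\right]\to 0$, i.e. $\mathbb{E}\left[\|T_j(x-x_0)\|^2\right]\to 0$ once more.

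Finally I would verify the displayed identity for $\xi x$ when $\xi=P_k$: since $P_k$ is the rank-one orthogonal projection onto the unit vector $A^{*}\varphi_k/\|A^{*}\varphi_k\|$, we have $P_k x = \frac{\langle A^{*}\varphi_k, x\rangle}{\|A^{*}\varphi_k\|^2}A^{*}\varphi_k$, and then $\langle A^{*}\varphi_k, x\rangle = \langle \varphi_k, Ax\rangle = \langle \varphi_k, y\rangle$ using $Ax=y$; this is the point that makes the algorithm implementable, because the right-hand side depends only on the known data $y$ and $\varphi_k$, not on the unknown $x$. The main obstacle is essentially bookkeeping rather than a genuine difficulty: one must be careful that the i.i.d. realization $\{\xi_j\}$ used in \eqref{eq:p1} matches the hypotheses of \thmref{rkac} (same distribution \eqref{eq:rm1}, same constant $C$ from \eqref{eq:hp1}$=$\eqref{eq:rm2}), and that the ``$\xi_0=P_0$'' normalization is harmless since a different deterministic starting index only changes $x-x_0$ by a bounded amount and \thmref{rkac} holds for every vector in $\mathscr{H}$. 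No new estimate is required.
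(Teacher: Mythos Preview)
Your proposal is correct and follows essentially the same approach as the paper: verify that \lemref{rax} supplies the spectral-gap condition \eqref{eq:rm2}, unfold the recursion \eqref{eq:p1} into a product of complementary projections applied to $x-x_0$, and invoke \thmref{rkac}. The paper's own proof is a three-line sketch of exactly this; your version is more detailed (and you correctly flag the minor index-shift between the product $(1-\xi_j)\cdots(1-\xi_1)$ arising from \eqref{eq:p1} and the $T_n$ of \eqref{eq:rm3}, noting that the estimate in the proof of \thmref{rkac} is insensitive to the starting index).
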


\begin{proof}
By \lemref{rax}, the estimate (\ref{eq:hp1}) holds with the probabilities
specified in (\ref{eq:hp}). See also condition (\ref{eq:rm2}). Moreover,
it follows from (\ref{eq:p1}) that 
\[
x-x_{j}=\left(1-\xi_{j}\right)\left(1-\xi_{j-1}\right)\cdots\left(1-\xi_{1}\right)x_{0}.
\]
Therefore, by \thmref{rkac}, the assertion in (\ref{eq:p2}) holds
(with a suitable choice of index $j$).
\end{proof}

\section{System of isometries}

Below we discuss a particular aspect of our problem where the polydisk
$\mathbb{D}^{d}$ will play an important role. As outlined below,
the polydisk is a natural part of our harmonic analysis of frame-approximation
questions in the Hilbert space $L^{2}(\mu)$, where $\mu$ is in a
suitable class of IFS-measures, i.e., the multivariable setting for
fractal measures.
\begin{lem}
\label{lem:viso}Fix $d>1$, and let $\mathbb{D}^{d}$ be the polydisk.
Let $H_{2}\left(\mathbb{D}^{d}\right)$ be the corresponding Hardy
space. Let $\mu$ be a Borel probability measure on $\mathbb{T}^{d}\simeq\left[0,1\right]^{d}$.
Then there is a bijective correspondence between:
\begin{enumerate}
\item \label{enu:pfs1}isometries $V:L^{2}\left(\mu\right)\rightarrow H_{2}\left(\mathbb{D}^{d}\right)$;
and
\item \label{enu:pfs2}Parseval frames $\left\{ g_{n}\right\} $ in $L^{2}\left(\mu\right)$.
\end{enumerate}
The correspondence is as follows:
\end{lem}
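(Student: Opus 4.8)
\textbf{Proof plan for Lemma \ref{lem:viso}.} The plan is to exhibit the bijection explicitly and then verify both directions. First I would fix, once and for all, the canonical orthonormal basis of $H_{2}\left(\mathbb{D}^{d}\right)$, namely the monomials $z^{n} = z_{1}^{n_{1}}\cdots z_{d}^{n_{d}}$ indexed by $n\in\mathbb{N}_{0}^{d}$, together with the fact that the boundary-value embedding identifies $z^{n}$ with the exponential $e_{n}$ on $\mathbb{T}^{d}$. The point of this choice is that $\left\{e_{n}\right\}_{n\in\mathbb{N}_{0}^{d}}$ is at least a \emph{total} set in $L^{2}\left(\mu\right)$ whenever $\mu$ is slice singular (Theorem \ref{thm:SM}), so the two sides of the correspondence are linked through a common index set $\mathbb{N}_{0}^{d}$.

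Given an isometry $V:L^{2}\left(\mu\right)\rightarrow H_{2}\left(\mathbb{D}^{d}\right)$, I would define $g_{n} := V^{*}z^{n}\in L^{2}\left(\mu\right)$. Then for $f\in L^{2}\left(\mu\right)$ one computes, using $V^{*}V = 1$ and the Parseval identity for the ONB $\left\{z^{n}\right\}$ in $H_{2}$,
\begin{align}
\left\Vert f\right\Vert_{L^{2}\left(\mu\right)}^{2} &= \left\Vert Vf\right\Vert_{H_{2}}^{2} = \sum_{n\in\mathbb{N}_{0}^{d}}\left|\left\langle z^{n},Vf\right\rangle_{H_{2}}\right|^{2} = \sum_{n\in\mathbb{N}_{0}^{d}}\left|\left\langle g_{n},f\right\rangle_{L^{2}\left(\mu\right)}\right|^{2},\label{eq:visoparseval}
\end{align}
which is exactly the statement that $\left\{g_{n}\right\}$ is a Parseval frame in $L^{2}\left(\mu\right)$. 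Conversely, given a Parseval frame $\left\{g_{n}\right\}_{n\in\mathbb{N}_{0}^{d}}$, I would define $V:L^{2}\left(\mu\right)\rightarrow H_{2}\left(\mathbb{D}^{d}\right)$ as the analysis operator $Vf = \sum_{n}\left\langle g_{n},f\right\rangle z^{n}$; the Parseval property gives $\left\Vert Vf\right\Vert_{H_{2}}^{2} = \sum_{n}\left|\left\langle g_{n},f\right\rangle\right|^{2} = \left\Vert f\right\Vert^{2}$, so $V$ is an isometry, and one checks $V^{*}z^{n} = g_{n}$, so the two constructions are mutually inverse. This establishes the bijection; the displayed formula the lemma announces ("The correspondence is as follows:") is precisely $g_{n} = V^{*}z^{n}$, equivalently $Vf = \sum_{n\in\mathbb{N}_{0}^{d}}\left\langle g_{n},f\right\rangle_{L^{2}\left(\mu\right)}\, z^{n}$.

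The main obstacle — really the only subtle point — is making sure the construction from a Parseval frame lands in $H_{2}\left(\mathbb{D}^{d}\right)$ and not in all of $L^{2}\left(\mathbb{T}^{d}\right)$: the series $\sum_{n}\left\langle g_{n},f\right\rangle z^{n}$ is supported on the nonnegative cone $\mathbb{N}_{0}^{d}$, so its $\ell^{2}$ square-summable coefficients define an element of the Hardy space by the monomial-ONB description of $H_{2}\left(\mathbb{D}^{d}\right)$; no positivity-of-spectrum argument beyond this indexing is needed. A second, minor point is well-definedness of $V^{*}z^{n}$ as an element of $L^{2}\left(\mu\right)$, which is immediate since $V$ is bounded. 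I would also remark that injectivity of $V$ forces $\left\{g_{n}\right\}$ to span a dense subspace, consistent with the totality of $\left\{e_{n}\right\}$ guaranteed by slice singularity, and that surjectivity of $V$ is \emph{not} required — the correspondence is with isometries, not unitaries, which is exactly why general (non-orthogonal, over-complete) Parseval frames appear rather than orthonormal bases.
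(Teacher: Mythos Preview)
Your argument is correct and follows the same route as the paper: the paper proves the abstract statement that for any separable $\mathscr{H}_{1}$, Parseval frames $\{g_{n}\}$ in $\mathscr{H}_{1}$ correspond bijectively to isometries $V:\mathscr{H}_{1}\to\mathscr{H}_{2}$ via $g_{n}=V^{*}\beta_{n}$ for a fixed ONB $\{\beta_{n}\}$ of $\mathscr{H}_{2}$, with exactly your computation $\sum_{n}|\langle g_{n},h\rangle|^{2}=\sum_{n}|\langle\beta_{n},Vh\rangle|^{2}=\Vert Vh\Vert^{2}=\Vert h\Vert^{2}$, and then specializes to $\mathscr{H}_{1}=L^{2}(\mu)$, $\mathscr{H}_{2}=H_{2}(\mathbb{D}^{d})$, $\beta_{n}=z^{n}$. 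One small point: your references to slice singularity and to the totality of $\{e_{n}\}$ are extraneous here---the lemma is stated for an arbitrary Borel probability measure $\mu$, and neither direction of the bijection uses any property of $\mu$ beyond separability of $L^{2}(\mu)$; in particular the $g_{n}$ need not coincide with the $e_{n}$, so the aside linking density of $\{g_{n}\}$ to Theorem~\ref{thm:SM} should be dropped.
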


(\ref{enu:pfs1})$\rightarrow$(\ref{enu:pfs2}). Given $V$, isometric;
set $g_{n}:=V^{*}\left(z^{n}\right)$, where $n\in\mathbb{N}_{0}^{d}$.

(\ref{enu:pfs2})$\rightarrow$(\ref{enu:pfs1}). Given $\left\{ g_{n}\right\} $
a fixed Parseval frame in $L^{2}\left(\mu\right)$, set 
\[
\left(Vf\right)\left(z\right)=\sum_{n\in\mathbb{N}_{0}^{d}}\left\langle g_{n},f\right\rangle _{L^{2}\left(\mu\right)}z^{n},\;z\in\mathbb{D}^{d}.
\]

\begin{proof}
The fact that there is a correspondence between isometries and Parseval
frames is general. Let $\mathscr{H}_{1}$ be a separable Hilbert space,
then there is a bijective correspondence between the following two:
\begin{enumerate}
\item \label{enu:pf1}A Parseval frame $\left(g_{n}\right)_{n\in\mathbb{N}}$
in $\mathscr{H}_{1}$ (with a suitable choice of index);
\item \label{enu:pf2}A pair $\left(\mathscr{H}_{2},V\right)$, where $\mathscr{H}_{2}$
is a Hilbert space, and $V:\mathscr{H}_{1}\rightarrow\mathscr{H}_{2}$
is isometric.
\end{enumerate}
(Note that there is a similar result for Bessel frames as well.) The
correspondence is as follows.

Given a Parseval frame $\left(g_{n}\right)_{n\in\mathbb{N}}$ in $\mathscr{H}_{1}$,
take $\mathscr{H}_{2}:=l^{2}\left(\mathbb{N}\right)$, and set $Vf=\sum_{n}\left\langle g_{n},f\right\rangle _{\mathscr{H}_{1}}\delta_{n}$,
where $\left\{ \delta_{n}\right\} _{n\in\mathbb{N}}$ is the standard
ONB in $l^{2}\left(\mathbb{N}\right)$.

Conversely, let $\left(\mathscr{H}_{2},V\right)$ be such that $\mathscr{H}_{1}\xrightarrow{\;V\;}\mathscr{H}_{2}$
is isometric. Choose an ONB $\left\{ \beta_{n}\right\} _{n\in\mathbb{N}}$
in $\mathscr{H}_{2}$, and set $g_{n}=V^{*}\beta_{n}$. Then $\left\{ g_{n}\right\} $
is a Parseval frame in $\mathscr{H}_{1}$. Indeed, for all $h\in\mathscr{H}_{1}$,
one checks that, 
\begin{align*}
\sum_{n}\left|\left\langle g_{n},h\right\rangle _{\mathscr{H}_{1}}\right|^{2} & =\sum_{n}\left|\left\langle V^{*}\beta_{n},h\right\rangle _{\mathscr{H}_{1}}\right|^{2}\\
 & =\sum_{n}\left|\left\langle \beta_{n},Vh\right\rangle _{\mathscr{H}_{2}}\right|^{2}\\
 & =\left\Vert Vh\right\Vert _{\mathscr{H}_{2}}^{2}=\left\Vert h\right\Vert _{\mathscr{H}_{1}}^{2}.
\end{align*}

The lemma follows by setting $\mathscr{H}_{1}=L^{2}\left(\mu\right)$,
and $\mathscr{H}_{2}=H_{2}\left(\mathbb{D}^{d}\right)$.
\end{proof}
\begin{defn}
Fix $d>1$. For all $x\in\mathbb{T}^{d}$, and all $z\in\mathbb{D}^{d}$,
let 
\begin{equation}
K^{\ast}\left(z,x\right)=\prod_{j=1}^{d}\frac{1}{1-z_{j}\overline{e\left(x_{j}\right)}}.
\end{equation}
Let $\mu\in\mathcal{M}\left(\mathbb{T}^{d}\right)$, and set
\begin{align}
\left(C_{\mu}f\right)\left(z\right) & =\int_{\mathbb{T}^{d}}f\left(x\right)K^{\ast}\left(z,x\right)d\mu\left(x\right)\label{eq:nc1}\\
 & =\sum_{n\in\mathbb{N}^{d}}\widehat{fd\mu}\left(n\right)z^{n}.\nonumber 
\end{align}
In particular, 
\begin{equation}
\left(C_{\mu}1\right)\left(z\right)=\sum_{n\in\mathbb{N}_{0}^{d}}\widehat{\mu}\left(n\right)z^{n},
\end{equation}
where $\widehat{\mu}\left(n\right)=\int_{\mathbb{T}^{d}}\overline{e_{n}\left(x\right)}d\mu\left(x\right)$,
$n\in\mathbb{N}_{0}^{d}$.
\end{defn}

Let $L^{2}\left(\mu\right)\left(=L^{2}\left(\mathbb{T}^{2},\mu\right)\right)$
be as above, where $\mu\in\mathcal{M}^{+}\left(\mathbb{T}^{2}\right)$,
$\xi=\mu\circ\pi_{1}^{-1}$, and $\mu$ assumes a disintegration $d\mu=\int\sigma^{x}\left(dy\right)d\xi\left(x\right)$.
\begin{thm}[see e.g., \cite{MR1289670,MR3411042}]
\label{thm:ssiso} Assume $\mu$ is slice singular. There are then
two associated isometries:
\begin{equation}
L^{2}\left(\xi\right)\xrightarrow{\;V_{\xi}\;}H_{2}\left(\mathbb{D}\right),\quad\left(V_{\xi}f\right)\left(z\right)=\frac{\left(C_{\xi}f\right)\left(z\right)}{\left(C_{\xi}1\right)\left(z\right)},\label{eq:vxi}
\end{equation}
and 
\begin{equation}
L^{2}\left(\sigma^{x}\right)\xrightarrow{\;V_{\sigma^{x}}\;}H_{2}\left(\mathbb{D}\right),\quad\left(V_{\sigma^{x}}f\right)\left(z\right)=\frac{\left(C_{\sigma^{x}}f\right)\left(z\right)}{\left(C_{\sigma^{x}}1\right)\left(z\right)}.\label{eq:vsig}
\end{equation}
\end{thm}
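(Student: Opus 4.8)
The plan is to recognize that \thmref{ssiso}, though stated in the $2$D setting, reduces to a one‑variable fact applied twice: $\xi$ is a Borel probability measure on $[0,1]\simeq\mathbb{T}$, and for $\xi$‑a.a.\ $x$ the slice $\sigma^{x}$ is a Borel probability measure on $[0,1]\simeq\mathbb{T}$, while $H_{2}(\mathbb{D})$ is the Hardy space of the \emph{one}‑dimensional disk. Slice singularity (\defref{SS}) says exactly that $\xi$ is singular w.r.t.\ Lebesgue measure and that $\sigma^{x}$ is singular for a.a.\ $x$. So it suffices to prove: \emph{if $\nu$ is a Borel probability measure on $\mathbb{T}$, singular w.r.t.\ Lebesgue measure, then $V_{\nu}f:=(C_{\nu}f)/(C_{\nu}1)$ is a well‑defined isometry $L^{2}(\nu)\rightarrow H_{2}(\mathbb{D})$.} Note first that $(C_{\nu}1)(z)=\sum_{n\geq0}\widehat{\nu}(n)z^{n}$ is zero‑free on $\mathbb{D}$, since $2\operatorname{Re}\big((C_{\nu}1)(z)\big)=P[\nu](z)+\nu(\mathbb{T})>0$, where $P[\nu]$ is the Poisson integral of $\nu$; hence $V_{\nu}f$ is at least holomorphic on $\mathbb{D}$.

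To prove this $1$D claim I would run the projection‑valued Kaczmarz scheme developed above in $\mathscr{H}=L^{2}(\nu)$, on the unit vectors $e_{n}$, $n\in\mathbb{N}_{0}$ --- equivalently on the rank‑one projections $P_{n}=|e_{n}\rangle\langle e_{n}|$. By F\&M Riesz, \thmref{FM}, singularity of $\nu$ makes $\{e_{n}\}_{n\in\mathbb{N}_{0}}$ total in $L^{2}(\nu)$. By the Kwapie\'{n}--Mycielski effectiveness criterion for stationary sequences \cite{MR2263965} --- this is where singularity is used a second time; see also \cite{2016arXiv160308852H,MR3796641} and \remref{nf} --- the system $\{e_{n}\}_{n\in\mathbb{N}_{0}}$ is \emph{effective}. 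Consequently, by \corref{ke2}, the Kaczmarz vectors $\{g_{n}\}_{n\in\mathbb{N}_{0}}$ generated by the recursion of \corref{kg}, namely $g_{0}=e_{0}$ and $g_{n}=e_{n}-\sum_{j=0}^{n-1}\langle e_{j},e_{n}\rangle g_{j}$, form a Parseval frame in $L^{2}(\nu)$. By the general Parseval‑frame/isometry correspondence (the proof of \lemref{viso}), together with $\ell^{2}(\mathbb{N}_{0})\cong H_{2}(\mathbb{D})$, the operator $W\colon L^{2}(\nu)\rightarrow H_{2}(\mathbb{D})$, $(Wf)(z)=\sum_{n\in\mathbb{N}_{0}}\langle g_{n},f\rangle_{L^{2}(\nu)}z^{n}$, is an isometry.

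It remains to identify $W$ with $V_{\nu}$, which is a generating‑function bookkeeping step. Writing $\gamma_{k}=\int_{\mathbb{T}}e(kx)\,d\nu(x)$ for $k\geq0$ (so $\langle e_{j},e_{n}\rangle_{L^{2}(\nu)}=\gamma_{n-j}$ for $n\geq j$), summing the recursion for $g_{n}$ against $z^{n}$ and collapsing the resulting geometric series gives, for $z\in\mathbb{D}$,
\[
\Big(\sum\nolimits_{n}g_{n}(x)z^{n}\Big)\Big(\sum\nolimits_{k\geq0}\gamma_{k}z^{k}\Big)=\frac{1}{1-z\,e(x)};
\]
taking complex conjugates and using $\overline{\sum_{k}\gamma_{k}\overline{z}^{k}}=\sum_{k}\widehat{\nu}(k)z^{k}=(C_{\nu}1)(z)$ yields
\[
\sum\nolimits_{n}\overline{g_{n}(x)}\,z^{n}=\frac{1}{(C_{\nu}1)(z)\,\big(1-z\,\overline{e(x)}\big)}.
\]
Since the Parseval identity forces $\|g_{n}\|_{L^{2}(\nu)}\leq1$, one may interchange sum and integral to obtain, for every $f\in L^{2}(\nu)$ and $z\in\mathbb{D}$,
\[
(Wf)(z)=\int_{\mathbb{T}}\Big(\sum\nolimits_{n}\overline{g_{n}(x)}\,z^{n}\Big)f(x)\,d\nu(x)=\frac{1}{(C_{\nu}1)(z)}\int_{\mathbb{T}}\frac{f(x)}{1-z\,\overline{e(x)}}\,d\nu(x)=(V_{\nu}f)(z).
\]
Hence $V_{\nu}=W$ is an isometry into $H_{2}(\mathbb{D})$. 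Applying this with $\nu=\xi$ produces $V_{\xi}$, and with $\nu=\sigma^{x}$ for $\xi$‑a.a.\ $x$ produces $V_{\sigma^{x}}$; measurable dependence on $x$ is visible from the recursion for $g_{n}$ and the measurability of $x\mapsto\sigma^{x}$ from the disintegration lemma.

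The main obstacle is the \emph{effectiveness} of $\{e_{n}\}_{n\in\mathbb{N}_{0}}$ in $L^{2}(\nu)$ for singular $\nu$: totality from F\&M Riesz does not by itself yield norm convergence of the Kaczmarz partial sums. This is precisely the nontrivial input of Kwapie\'{n}--Mycielski (which in turn leans on Poltoratski's theorem on nontangential boundary values of $C_{\nu}f/C_{\nu}1$), and it is where the references cited with the statement, \cite{MR1289670,MR3411042}, are used; alternatively, the $1$D claim is Clark's theorem, that for singular $\nu$ the normalized Cauchy transform is unitary from $L^{2}(\nu)$ onto the model space $H_{2}\ominus bH_{2}$ with $b=1-(C_{\nu}1)^{-1}$ inner. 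Everything else --- the zero‑freeness of $C_{\nu}1$, the generating‑function identity, the interchange of sum and integral, and the measurability in $x$ --- is routine.
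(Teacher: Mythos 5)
Your proposal is correct and follows essentially the same route as the paper's own proof sketch: reduce to the one\textendash variable statement for a singular measure $\nu$, invoke F\&M Riesz for totality and Kwapie\'{n}\textendash Mycielski for effectiveness, obtain the Kaczmarz Parseval frame $\{g_{n}\}$, and identify the normalized Cauchy transform with the frame analysis operator under $\ell^{2}(\mathbb{N}_{0})\cong H_{2}(\mathbb{D})$. The only difference is that you supply the details the paper leaves as ``one may verify'' (zero\textendash freeness of $C_{\nu}1$, the generating\textendash function identity, and the sum/integral interchange), all of which check out.
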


\begin{proof}[Proof sketch]
Let $\nu$ be a positive Borel measure on $\left[0,1\right]$, and
$C_{\mu}$ be the Cauchy transform from (\ref{eq:nc1}). Assume $\nu$
is singular.

Then, by F.M Riesz (see \thmref{FM}), the set $\left\{ e_{n}\right\} _{n\in\mathbb{N}_{0}}$
is total in $L^{2}\left(\nu\right)$. Moreover, it follows from \cite{MR2263965},
that $\left\{ e_{n}\right\} _{n\in\mathbb{N}_{0}}$ is effective.
Thus, every $f\in L^{2}\left(\nu\right)$ has (non-orthogonal) Fourier
expansion 
\[
f=\sum_{n\in\mathbb{N}_{0}}\left\langle g_{n},f\right\rangle _{L^{2}\left(\nu\right)}e_{n},
\]
where $\left\{ g_{n}\right\} $ is the Parseval frame in $L^{2}\left(\nu\right)$
constructed from Kaczmarz' algorithm. See also \remref{nf}. One may
verify that 
\[
\left(V_{\nu}f\right)\left(z\right)=\frac{C_{\nu}f}{C_{\nu}1}=\sum_{n\in\mathbb{N}_{0}}\left\langle g_{n},f\right\rangle _{L^{2}\left(\nu\right)}z^{n},
\]
and so $V_{\nu}:L^{2}\left(\nu\right)\rightarrow H_{2}\left(\mathbb{D}\right)$
is isometric.

The theorem follows from this, and the assumption that $\mu$ is slice
singular.
\end{proof}
\begin{cor}
The mapping 
\[
V_{\mu}:L^{2}\left(\mu\right)\longrightarrow H_{2}\left(\mathbb{D}^{2}\right)\left(=H_{2}\left(\mathbb{D}\right)\otimes H_{2}\left(\mathbb{D}\right)\right)
\]
given by 
\begin{equation}
\left(V_{\mu}F\right)\left(z_{1},z_{2}\right)=V_{\xi}\left(\left(V_{\sigma^{x}\left(\cdot\right)}F\left(x,\cdot\right)\right)\left(z_{2}\right)\right)\left(z_{1}\right)
\end{equation}
is isometric. It follows that $\left\{ g_{n}:=V_{\mu}^{*}\left(z^{n}\right)\right\} _{n\in\mathbb{N}_{0}^{2}}$
is a Parseval frame in $L^{2}\left(\mu\right)$.
\end{cor}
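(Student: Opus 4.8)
The plan is to reduce the two-variable statement to two applications of the one-variable isometry from \thmref{ssiso}, composed in an iterated (slice-by-slice) fashion, exactly as the formula for $V_\mu$ suggests. First I would unwind the definition: for $F\in L^2(\mu)$, write $F(x,\cdot)\in L^2(\sigma^x)$ for $\xi$-a.a.\ $x$ (this is legitimate by the disintegration formula, since $\iint|F|^2\,d\mu=\int\|F(x,\cdot)\|^2_{L^2(\sigma^x)}\,d\xi(x)$), apply the inner isometry $V_{\sigma^x}$ to get, for each fixed $z_2\in\mathbb{D}$, the scalar $(V_{\sigma^x}F(x,\cdot))(z_2)$ depending on $x$; then apply the outer isometry $V_\xi$ in the $x$-variable to obtain a function of $z_1$. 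The key computational point is the norm identity
\begin{equation}
\|V_\mu F\|_{H_2(\mathbb{D}^2)}^2 = \int_{[0,1]}\|V_{\sigma^x}F(x,\cdot)\|_{H_2(\mathbb{D})}^2\,d\xi(x) = \int_{[0,1]}\|F(x,\cdot)\|_{L^2(\sigma^x)}^2\,d\xi(x) = \|F\|_{L^2(\mu)}^2,
\end{equation}
where the first equality is the isometry property of $V_\xi$ applied to the $L^2(\xi)$-valued function $x\mapsto\|V_{\sigma^x}F(x,\cdot)\|_{H_2(\mathbb{D})}$ (after checking this function lies in $L^2(\xi)$), the second is the slicewise isometry property of $V_{\sigma^x}$, and the third is again the disintegration formula. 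Once $V_\mu$ is shown to be a well-defined isometry into $H_2(\mathbb{D})\otimes H_2(\mathbb{D})\cong H_2(\mathbb{D}^2)$, the Parseval-frame conclusion is immediate: by \lemref{viso} (or equivalently the general isometry$\leftrightarrow$Parseval-frame correspondence recalled in its proof), the vectors $g_n:=V_\mu^*(z^n)$, indexed by $n=(n_1,n_2)\in\mathbb{N}_0^2$, form a Parseval frame in $L^2(\mu)$, since $\{z^n\}_{n\in\mathbb{N}_0^2}$ is an ONB of $H_2(\mathbb{D}^2)$.

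I would organize the argument in three steps. Step 1: measurability and integrability — verify that $x\mapsto (V_{\sigma^x}F(x,\cdot))(z_2)$ is $\xi$-measurable (for fixed $z_2$, and jointly where needed), so that the outer application of $V_\xi$ makes sense, and that $x\mapsto \|V_{\sigma^x}F(x,\cdot)\|_{H_2(\mathbb{D})}^2$ is $\xi$-integrable, which follows from the slicewise isometry and $F\in L^2(\mu)$. Step 2: linearity and the isometry identity — linearity of $V_\mu$ in $F$ is clear from linearity of $V_\xi$ and each $V_{\sigma^x}$; the norm identity is the display above. Step 3: surjectivity of the construction onto a Parseval frame — invoke \lemref{viso} with $\mathscr{H}_1=L^2(\mu)$ and $\mathscr{H}_2=H_2(\mathbb{D}^2)$, using the ONB $\{z^n\}_{n\in\mathbb{N}_0^2}$.

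The main obstacle I anticipate is the measurability/regularity bookkeeping in Step 1: the conditional measures $\sigma^x$ are only defined for $\xi$-a.a.\ $x$, and the dependence $x\mapsto V_{\sigma^x}$ involves the Cauchy-transform quotient $(C_{\sigma^x}f)(z)/(C_{\sigma^x}1)(z)$, so one must confirm that this depends measurably on $x$ (for each $z$, and in a way that survives integration), and that the Kaczmarz-frame functions $g_n^{(x)}$ attached to each slice vary measurably. This is a standard-but-delicate "measurable selection / Rohlin disintegration" issue; I would handle it by noting that $x\mapsto \widehat{F(x,\cdot)\,d\sigma^x}(k)$ and $x\mapsto\widehat{\sigma^x}(k)$ are measurable for each $k$ (immediate from \eqref{eq:A2} applied to $F(x,y)e_k(y)$ and to $e_k(y)$), hence $x\mapsto(C_{\sigma^x}F(x,\cdot))(z_2)$ and $x\mapsto(C_{\sigma^x}1)(z_2)$ are measurable as convergent power series with measurable coefficients, and the denominator is nonvanishing on $\mathbb{D}$ for $\xi$-a.a.\ $x$ (since $\sigma^x$ is a probability measure), so the quotient is measurable. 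The rest is routine Fubini/Tonelli together with the two isometry statements already proved.
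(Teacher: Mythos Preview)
Your approach is correct and is essentially the same as the paper's: both arguments reduce to the two one-variable isometries $V_\xi$ and $V_{\sigma^x}$ from \thmref{ssiso}, composed via the disintegration of $\mu$, followed by an appeal to \lemref{viso} for the Parseval-frame conclusion. The paper's proof is a single sentence (``Follows from \thmref{ssiso} and \lemref{viso}''), so your write-up simply supplies the details the paper leaves implicit---including the measurability bookkeeping, which the paper does not address at all; one small slip: in justifying the first equality of your displayed norm identity you describe applying $V_\xi$ to the scalar function $x\mapsto\|V_{\sigma^x}F(x,\cdot)\|$, whereas what is really used is that $V_\xi$ (tensored with the identity on $H_2(\mathbb{D})$, i.e., applied coefficient-by-coefficient in $z_2$) is isometric on $L^2(\xi;H_2(\mathbb{D}))$.
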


\begin{proof}
Follows from \thmref{ssiso} and \lemref{viso}.
\end{proof}
\begin{rem}
From the above discussion, we see that if $V:L^{2}\left(\mu\right)\rightarrow H_{2}\left(\mathbb{D}^{2}\right)$
is an isometry, then $\left\{ g_{n}:=V^{*}\left(z^{n}\right)\right\} _{n\in\mathbb{N}_{0}^{2}}$
is a Parseval frame in $L^{2}\left(\mu\right)$. This implication
holds in general. Since there are ``many'' such isometries, it follows
that there are ``many'' Parseval frames. For more details, see \cite{2016arXiv160308852H,MR3796641,HERR2018}
and the reference therein.
\end{rem}

\begin{lem}
Let $K$ be a kernel on $\mathbb{D}^{d}$, and $\mu$ be a measure
on $\mathbb{T}^{d}$. Then for all $z\in\mathbb{D}^{d}$, we have
$\lim_{r\rightarrow1}K\left(z,re\left(x\right)\right)=K^{\ast}\left(z,x\right)$,
a.a. $x\in\mathbb{T}^{d}$; and 
\[
V_{\mu}^{*}\left(K\left(\cdot,z\right)\right)=K^{\ast}\left(z,x\right),
\]
a.a. $x$ w.r.t. $\mu$.
\end{lem}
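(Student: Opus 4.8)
The plan is to read $K$ as the reproducing kernel of the Hardy space $H_{2}(\mathbb{D}^{d})$, so that for each fixed $z\in\mathbb{D}^{d}$ the section $K(\cdot,z)$ is a genuine element of $H_{2}(\mathbb{D}^{d})$ (holomorphic in the free variable), while its Hermitian partner $K(z,\cdot)$ is the anti-holomorphic section whose radial boundary value on $\mathbb{T}^{d}$ is to be produced as $K^{\ast}(z,\cdot)$. I would then split the statement into two logically independent parts: first, an existence-and-identification statement for the boundary limit $\lim_{r\to1}K(z,re(x))$; and second, the operator identity $V_{\mu}^{\ast}(K(\cdot,z))=K^{\ast}(z,\cdot)$. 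The second part will rest on the defining adjoint relation for the isometry $V_{\mu}$ of \lemref{viso} together with the reproducing property of $K$; the first part is precisely the place where a Fatou-type input becomes unavoidable once $K$ is treated as a general kernel rather than the explicit product.

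For the boundary limit I would invoke a Fatou-type theorem on the polydisk: every $h\in H_{2}(\mathbb{D}^{d})$ admits radial boundary limits $\lim_{r\to1}h(re(x))$ for a.a.\ $x\in\mathbb{T}^{d}$ with respect to normalized Lebesgue measure. Applying this to $\overline{K(z,\cdot)}\in H_{2}(\mathbb{D}^{d})$ shows that $K(z,\cdot)$ itself has a.e.\ radial limits, and I would then identify that limit with $K^{\ast}(z,x)=\prod_{j=1}^{d}(1-z_{j}\overline{e(x_{j})})^{-1}$ by matching Fourier coefficients: for fixed interior $z$ both $K(z,re(x))$ and $K^{\ast}(z,x)$ carry the same expansion $\sum_{n\in\mathbb{N}_{0}^{d}}z^{n}\overline{e_{n}(x)}$, absolutely convergent since $|z_{j}|<1$. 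I emphasize that it is the appeal to Fatou, and not any bare product computation, that is the substantive step here: for a general kernel $K$ in the same reproducing-kernel framework the a.e.\ existence of the boundary limit cannot be extracted by hand and must be imported as the boundary-value input.

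For the adjoint identity I would argue entirely through the reproducing property. Fixing $z\in\mathbb{D}^{d}$, the adjoint relation gives, for every $F\in L^{2}(\mu)$, the chain $\langle V_{\mu}^{\ast}K(\cdot,z),F\rangle_{L^{2}(\mu)}=\langle K(\cdot,z),V_{\mu}F\rangle_{H_{2}}=(V_{\mu}F)(z)$, the last equality being the reproducing property of $K$ in $H_{2}(\mathbb{D}^{d})$. I would then insert the explicit Cauchy-transform description of $V_{\mu}$ furnished by \thmref{ssiso} and its corollary (the iterated slice construction $V_{\mu}=V_{\xi}\circ V_{\sigma^{x}}$), which rewrites $(V_{\mu}F)(z)$ as the (normalized) pairing of $F$ against the boundary kernel $K^{\ast}(z,\cdot)$ with respect to $\mu$. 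Since this holds for all $F$ in the dense set $\mathrm{span}\{e_{n}\}$, the two elements $V_{\mu}^{\ast}K(\cdot,z)$ and $K^{\ast}(z,\cdot)$ of $L^{2}(\mu)$ coincide, and because $K^{\ast}(z,\cdot)$ is a bounded continuous function on $\mathbb{T}^{d}$ for fixed interior $z$ (its denominators stay bounded away from $0$), the equality upgrades from an $L^{2}(\mu)$ identity to a genuine $\mu$-a.e.\ identity.

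The hard part will be the passage to \emph{$\mu$-almost everywhere}. Since $\mu$ is only assumed slice singular, it is mutually singular with Lebesgue measure, so the Fatou theorem of the second step supplies radial limits only Lebesgue-a.e.\ and says nothing directly about a possible $\mu$-null exceptional set. My route around this is to keep $z$ fixed in the interior, where $K^{\ast}(z,\cdot)$ is a bona fide bounded function with an unambiguous $\mu$-a.e.\ representative; the delicate boundary analysis is then confined entirely to the Lebesgue-a.e.\ statement of the first part, and the $\mu$-a.e.\ statement of the second part follows from the $L^{2}(\mu)$ identity rather than from any pointwise boundary behaviour of $H_{2}$ functions tested against the singular measure $\mu$. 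I would flag explicitly that trying instead to let $z$ approach $\mathbb{T}^{d}$ against $\mu$ is exactly where a genuinely stronger, singular-measure Fatou theorem would be needed, and I would deliberately avoid that route.
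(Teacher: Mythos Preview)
The paper states this lemma without proof, so there is no argument in the paper to compare against. Your proposal therefore cannot be judged against ``the paper's approach''; one can only assess it on its own merits.

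Your overall plan (reproducing property plus adjoint for the second assertion, boundary-value analysis for the first) is the natural one. Two points, however, deserve comment. For the first assertion, invoking Fatou is unnecessary once $K$ is the Szeg\H{o} kernel of $H_{2}(\mathbb{D}^{d})$: for fixed $z\in\mathbb{D}^{d}$ the function $w\mapsto\prod_{j}(1-z_{j}\overline{w_{j}})^{-1}$ is continuous on the closed polydisk, so $K(z,re(x))\to K^{\ast}(z,x)$ for \emph{every} $x$, not merely a.e. Your remark that Fatou is the ``substantive step'' applies only if one insists on treating $K$ abstractly, but the lemma's conclusion names the concrete $K^{\ast}$ from the paper, so that generality is not really in play.

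For the second assertion, your reproducing-kernel identity $\langle K(\cdot,z),V_{\mu}F\rangle_{H_{2}}=(V_{\mu}F)(z)$ is correct, but the step where you identify $(V_{\mu}F)(z)$ with the $L^{2}(\mu)$ pairing of $F$ against $K^{\ast}(z,\cdot)$ is where the real work lies, and your phrase ``(normalized) pairing'' papers over it. In the paper, $V_{\mu}$ is \emph{not} the raw Cauchy transform $C_{\mu}$ but the iterated slice construction $V_{\xi}\circ V_{\sigma^{x}}$, each factor of which carries the quotient $(C_{\nu}f)/(C_{\nu}1)$. Reconciling this normalization with the unnormalized boundary kernel $K^{\ast}(z,\cdot)$ is exactly the content to be verified, and your proposal does not carry it out. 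So the outline is right, but the crucial bookkeeping step is missing.
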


\section{\label{sec:gifs}General iterated function system (IFS)-theory}

In this section we turn to an analysis of the IFS measures (see e.g.,
\cite{MR625600,MR1656855,MR2319756,MR2431670,2016arXiv160308852H,MR3882025}),
as introduced in Sections \ref{sec:Intro} and \ref{sec:SSM} (see
(\ref{eq:E3}) below). The notion of iterated function systems (IFS)
for the case of measures fits the following general idea of patterns
with self-similarity across different scales. Also here, the IFS-measures
are created by recursive repetition of a simple process in an ongoing
feedback loop.

Recall that an IFS measure is obtained from a recursive algorithm
involving successive iteration of a finite system of maps in a metric
space. IFS systems are self-similar because the same fixed choice
of \textquotedblleft scaling\textquotedblright{} mappings is used
in each step of the algorithm. (The simplest IFS measures arise from
the standard Cantor construction applied to a finite interval. But
the idea works much more generally.) Then the chosen finite index-set
for the mappings is called an alphabet, denoted $B$. We shall analyze
here the IFS measures with the aid of symbolic dynamics on a probability
space $\Omega$, made up of infinite words in $B$. Then a fixed choice
of probability weights on $B$ leads to an associated infinite product
measure, called $\mathbb{P}$, on $\Omega$, see (\ref{eq:d2}). By
Kakutani's theorem, distinct weights yield mutually singular infinite
product measures.

We shall construct a random variable $X$ on $\Omega$ such that the
IFS then arises as the image under $X$, and the IFS measure $\mu$
becomes the distribution of $X$. Intuitively, $X$ is an infinite
address map; see also eq (\ref{eq:d2}) and \thmref{pp}. While the
choice of such system of maps could be rather general, we shall restrict
attention here to the case of a finite number of contractive affine
mappings in $\mathbb{R}^{d}$, $d$ fixed; see e.g., (\ref{eq:F2})
for the case of the standard Sierpinski gasket, where $d=2$. In this
case, the associated maximal entropy measure $\mu$ (see (\ref{eq:si3}))
is a probability measure prescribed by the uniform distribution on
$B$.\\

Let $\left(M,d\right)$ be a complete metric space. Fix an alphabet
$B=\left\{ b_{1},\cdots,b_{N}\right\} $, $N\geq2$, and let $\left\{ \tau_{b}\right\} _{b\in B}$
be a contractive IFS with attractor $W\subset M$, i.e., 
\begin{equation}
W=\bigcup_{b}\tau_{b}\left(W\right).\label{eq:e1}
\end{equation}
In fact, $W$ is uniquely determined by (\ref{eq:e1}).

Let $\left\{ p_{b}\right\} _{b\in B}$, $p_{b}>0$, $\sum_{b\in B}p_{b}=1$,
be fixed. Set $\Omega=B^{\mathbb{N}}$, equipped with the product
topology. Let 
\begin{equation}
\mathbb{P}=\vartimes{}_{1}^{\infty}p=\underset{\aleph_{0}\text{ product measure}}{\underbrace{p\times p\times p\cdots\cdots}}\label{eq:d2}
\end{equation}
be the infinite-product measure on $\Omega$ (see \cite{MR0014404,MR562914}).

In this section, we construct a random variable $X:\Omega\rightarrow M$
with value in $M$ (a measure space $\left(M,\mathscr{B}_{M}\right)$),
such that the distribution $\mu:=\mathbb{P}\circ X^{-1}$ is a Borel
probability measure supported on $W$, satisfying 
\begin{equation}
\mu=\sum_{b\in B}p_{b}\,\mu\circ\tau_{b}^{-1}.\label{eq:E3}
\end{equation}
That is, $\mu$ is the IFS measure.
\begin{thm}
\label{thm:pp}For points $\omega=\left(b_{i_{1}},b_{i_{2}},b_{i_{3}},\cdots\right)\in\Omega$
and $k\in\mathbb{N}$, set 
\begin{align}
\omega\big|_{k} & =\left(b_{i_{1}},b_{i_{2}},\cdots,b_{i_{k}}\right),\;\text{and}\label{eq:d3}\\
\tau_{\omega|_{k}} & =\tau_{b_{i_{k}}}\circ\cdots\circ\tau_{b_{i_{2}}}\circ\tau_{b_{i_{1}}}.\label{eq:d4}
\end{align}
Then $\bigcap_{k=1}^{\infty}\tau_{\omega|_{k}}$$\left(M\right)$
is a singleton, say $\left\{ x\left(\omega\right)\right\} $. Set
$X\left(\omega\right)=x\left(\omega\right)$, i.e., 
\begin{equation}
\left\{ X\left(\omega\right)\right\} =\bigcap_{k=1}^{\infty}\tau_{\omega|_{k}}\left(M\right);\label{eq:d5}
\end{equation}
then:
\begin{enumerate}
\item $X:\Omega\rightarrow M$ is an $\left(M,d\right)$-valued random variable.
\item \label{enu:pp2}The distribution of $X$, i.e., the measure 
\begin{equation}
\mu=\mathbb{P}\circ X^{-1}\label{eq:d6}
\end{equation}
is the unique Borel probability measure on $\left(M,d\right)$ satisfying:
\begin{equation}
\mu=\sum_{b\in B}p_{b}\,\mu\circ\tau_{b}^{-1};\label{eq:d7}
\end{equation}
equivalently, 
\begin{equation}
\int_{M}fd\mu=\sum_{b\in B}p_{b}\int_{M}\left(f\circ\tau_{b}\right)d\mu,\label{eq:d8}
\end{equation}
holds for all Borel functions $f$ on $M$.
\item The support $W_{\mu}=supp\left(\mu\right)$ is the minimal closed
set (IFS), $\neq\emptyset$, satisfying 
\begin{equation}
W_{\mu}=\bigcup_{b\in B}\tau_{b}\left(W_{\mu}\right).\label{eq:d9}
\end{equation}
\end{enumerate}
\end{thm}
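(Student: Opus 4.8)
The plan is to prove \thmref{pp} in the standard three movements corresponding to its three parts, exploiting the completeness of $(M,d)$ and the contractivity of the maps $\tau_b$, together with the machinery of the infinite-product measure $\mathbb{P}$ on $\Omega = B^{\mathbb{N}}$.

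First, for part (i), I would fix a contraction ratio $c = \max_b \mathrm{Lip}(\tau_b) < 1$ and observe that $\tau_{\omega|_k}(M)$ has diameter at most $c^k \cdot \mathrm{diam}(M)$ if $M$ is bounded, or more carefully, that $\{\tau_{\omega|_k}(M)\}_k$ is a decreasing nested sequence of closed sets whose diameters shrink geometrically once one passes to the attractor $W$ (which is compact); since $\tau_{\omega|_{k+1}}(M) \subseteq \tau_{\omega|_k}(M)$, Cantor's intersection theorem in the complete space $(M,d)$ gives that $\bigcap_k \tau_{\omega|_k}(M)$ is a single point $x(\omega)$. Measurability of $X:\omega \mapsto x(\omega)$ follows because $x(\omega)$ depends continuously (indeed, the coding map is continuous from the product topology on $\Omega$ to $M$), and a continuous map between these spaces is Borel measurable; I would note that $\omega|_k$ depends only on the first $k$ coordinates, so $X$ is a pointwise limit of the ``cylinder-constant'' maps $\omega \mapsto \tau_{\omega|_k}(x_0)$ for any fixed basepoint $x_0$.

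Second, for part (ii), the key identity is the shift-covariance $\tau_{(\omega')|_k} = \tau_{\omega|_{k+1}}$ where $\omega = (b, \omega')$, i.e. prepending a letter $b$ to $\omega'$ corresponds to applying $\tau_b$ last, so $X(b\,\omega') = \tau_b(X(\omega'))$. Decomposing $\Omega$ according to the first letter, $\mathbb{P} = \sum_{b\in B} p_b \, \mathbb{P}\circ S_b^{-1}$ where $S_b(\omega') = (b,\omega')$, and pushing forward under $X$ using $X\circ S_b = \tau_b \circ X$ yields $\mu = \mathbb{P}\circ X^{-1} = \sum_b p_b\, \mathbb{P}\circ(X\circ S_b)^{-1} = \sum_b p_b\, \mu\circ\tau_b^{-1}$, which is (\ref{eq:d7}); the equivalence with (\ref{eq:d8}) is just the change-of-variables formula for pushforward measures applied to Borel $f$. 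For uniqueness, I would invoke the contraction mapping principle on the complete metric space of Borel probability measures on $W$ equipped with a suitable Hutchinson/Wasserstein metric, noting that the map $\nu \mapsto \sum_b p_b\,\nu\circ\tau_b^{-1}$ is a contraction there; alternatively, and more self-containedly, one can test against a dense set of functions and iterate the identity (\ref{eq:d8}) to pin down all moments/Fourier coefficients.

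Third, for part (iii), I would show $W_\mu := \mathrm{supp}(\mu)$ satisfies (\ref{eq:d9}). Inclusion $\bigcup_b \tau_b(W_\mu) \subseteq W_\mu$: from (\ref{eq:d7}), each $\mu\circ\tau_b^{-1}$ is absolutely continuous with respect to $\mu$ (being dominated by $p_b^{-1}\mu$), so $\mathrm{supp}(\mu\circ\tau_b^{-1}) = \overline{\tau_b(W_\mu)} \subseteq W_\mu$; since $\tau_b$ is continuous and $W_\mu$ closed and $W$ compact, $\tau_b(W_\mu)$ is already closed. Reverse inclusion: if $x \in W_\mu$ and $U$ is an open neighborhood of $x$, then $\mu(U) > 0$, and (\ref{eq:d7}) forces $\sum_b p_b\,\mu(\tau_b^{-1}U) > 0$, so some $\mu(\tau_b^{-1}U)>0$, meaning $\tau_b^{-1}U$ meets $W_\mu$, i.e. $U$ meets $\tau_b(W_\mu)$; letting $U$ shrink shows $x \in \overline{\bigcup_b \tau_b(W_\mu)} = \bigcup_b \tau_b(W_\mu)$. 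Minimality follows because any nonempty closed $W'$ with $W' = \bigcup_b \tau_b(W')$ is invariant under the Hutchinson operator, hence equals the unique attractor $W$ of (\ref{eq:e1}), and one checks $W_\mu = W$ directly (the range of $X$ is dense in $W$ and contained in $W$).

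I expect the main obstacle to be the uniqueness claim in part (ii): establishing it cleanly requires either setting up the Hutchinson metric on probability measures and verifying the contraction estimate (a short but non-trivial argument about how pushforward under a Lipschitz map interacts with Wasserstein distance), or else an inductive ``matching on cylinders'' argument that any two fixed points of (\ref{eq:d7}) must agree on every cylinder set $\tau_{\omega|_k}(W)$ with mass $p_{\omega|_k} := p_{i_1}\cdots p_{i_k}$, and then appealing to the fact that these cylinders generate the Borel $\sigma$-algebra and shrink to points. The other parts are essentially bookkeeping around contractivity and the product structure of $\mathbb{P}$.
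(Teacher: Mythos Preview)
Your proposal is correct and follows essentially the same route as the paper: the paper also handles part~(i) via nested sets with shrinking diameters, and proves the invariance identity (\ref{eq:d7}) by introducing the shifts $\widetilde{\tau}_b(\omega')=(b,\omega')$ on $\Omega$, verifying the covariance $\tau_b\circ X = X\circ\widetilde{\tau}_b$, and then computing $\int f\,d\mu$ exactly as you outline. Where you go further---the explicit measurability of $X$, uniqueness of $\mu$ via the Hutchinson contraction, and the verification of (\ref{eq:d9}) for the support---the paper simply defers to standard IFS references and does not write these arguments out.
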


\begin{proof}
We shall make use of standard facts from the theory of iterated function
systems (IFS), and their measures; see e.g., \cite{MR625600,MR1656855,MR2431670}.

\emph{Monotonicity}: When $\omega\in\Omega$ is fixed, then $\tau_{\omega|_{k}}\left(M\right)$
is a monotone family of compact subsets in $M$ s.t. 
\begin{equation}
\tau_{\omega|_{k+1}}\left(M\right)\subset\tau_{\omega|_{k}}\left(M\right).\label{eq:d10}
\end{equation}
Since $\tau_{b}$ is strictly contractive for all $b\in B$, we get
\begin{equation}
\lim_{k\rightarrow\infty}diameter\left(\tau_{\omega|_{k}}\left(M\right)\right)=0,\label{eq:d11}
\end{equation}
and so the intersection in (\ref{eq:d5}) is a singleton depending
only on $\omega$.

\emph{The $\sigma$-algebras on $\left(\Omega,\mathbb{P}\right)$
and $\left(X,d\right)$}: The $\sigma$-algebra of subsets of $\Omega$
is generated by cylinder sets. Specifically, if $f=\left(b_{i_{1}},b_{i_{2}},\cdots,b_{i_{k}}\right)$
is a \emph{finite word}, the corresponding cylinder set is 
\begin{equation}
E\left(f\right)=\left\{ \omega\in\Omega\mid\omega_{j}=b_{i_{j}},\;1\leq j\leq k\right\} \subset\Omega.\label{eq:d12}
\end{equation}
The Borel $\sigma$-algebra on $M$ is determined from the fixed metric
$d$ on $M$.

The measure $\mathbb{P}\left(=\mathbb{P}_{p}\right)$ is specified
by its values on cylinder sets; i.e, set 
\begin{equation}
\mathbb{P}\left(E\left(f\right)\right)=p_{b_{i_{1}}}p_{b_{i_{2}}}\cdots p_{b_{i_{k}}}=:p_{f}.\label{eq:d13}
\end{equation}
Also see e.g., \cite{MR735967}.

\emph{Proof of (\ref{eq:d7})}. The argument is based on the following:
On $\Omega$, introduce the \emph{shifts} $\widetilde{\tau}_{b}\left(b_{i_{1}},b_{i_{2}},b_{i_{3}},\cdots\right)=\left(b,b_{i_{1}},b_{i_{2}},b_{i_{3}},\cdots\right)$,
$b\in B$. Let $X$ be as in (\ref{eq:d5})-(\ref{eq:d6}), then 
\begin{equation}
\tau_{b}\circ X=X\circ\widetilde{\tau}_{b},\label{eq:d14}
\end{equation}
which is immediate from (\ref{eq:d5}).

\[
\xymatrix{\Omega\ar[rr]^{X}\ar[d]_{\widetilde{\tau}_{b}} &  & M\ar[d]^{\tau_{b}}\\
\Omega\ar[rr]_{X} &  & M
}
\]

We now show (\ref{eq:d8}), equivalently (\ref{eq:d7}). Let $f$
be a Borel function on $M$, then 
\begin{alignat*}{2}
\int_{M}f\,d\mu & =\int_{\Omega}\left(f\circ X\right)d\mathbb{P} & \quad & \text{\ensuremath{\left(\text{by }\left(\ref{eq:d6}\right)\right)}}\\
 & =\sum_{b\in B}p_{b}\int_{\Omega}f\circ X\circ\widetilde{\tau}_{b}\:d\mathbb{P} &  & \left(\begin{matrix}\text{since \ensuremath{\mathbb{P}} is the product}\\
\text{ measure \ensuremath{\vartimes_{1}^{\infty}p}, see \ensuremath{\left(\ref{eq:d13}\right)}}
\end{matrix}\right)\\
 & =\sum_{b\in B}p_{b}\int_{\Omega}f\circ\tau_{b}\circ X\:d\mathbb{P} &  & \text{\ensuremath{\left(\text{by }\left(\ref{eq:d14}\right)\right)}}\\
 & =\sum_{b\in B}p_{b}\int_{M}f\circ\tau_{b}\:d\mu &  & \text{\ensuremath{\left(\text{by }\left(\ref{eq:d6}\right)\right)}}
\end{alignat*}
which is the desired conclusion.
\end{proof}
In general, the random variable $X:\Omega\rightarrow W$ (see (\ref{eq:d5}))
is not 1-1, but it is always onto. It is 1-1 when the IFS is non-overlap;
see \defref{no} below.
\begin{defn}
\label{def:no}We say that $\left(\tau_{b},W\right)$ is ``non-overlap''
iff for all $b,b'\in B$, with $b\neq b'$, we have $\tau_{b}\left(W\right)\cap\tau_{b'}\left(W\right)=\emptyset$.
\end{defn}

\begin{cor}
\label{cor:kuk}Assume $p\neq p'$, i.e., $p_{b}\neq p_{b}'$, for
some $b\in B$. (Recall that $\sum_{b\in B}p_{b}=\sum_{b\in B}p_{b}'=1$,
$p_{b}$, $p_{b}'>0$.) Let $\mathbb{P}=\vartimes_{1}^{\infty}p$,
and $\mathbb{P}'=\vartimes_{1}^{\infty}p'$ be the corresponding infinite
product measures; and let $\mu=\mathbb{P}\circ X^{-1}$, $\mu'=\mathbb{P}'\circ X^{-1}$
be the respective distributions. Then $\mu$ and $\mu'$ are mutually
singular.
\end{cor}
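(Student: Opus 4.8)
The plan is to reduce the mutual singularity of $\mu$ and $\mu'$ on $M$ to the mutual singularity of the infinite product measures $\mathbb{P}$ and $\mathbb{P}'$ on $\Omega=B^{\mathbb{N}}$, and then invoke Kakutani's dichotomy for product measures. First I would recall Kakutani's theorem: since each factor is a probability measure on the finite set $B$ with $p_b,p_b'>0$, the measures $\mathbb{P}=\vartimes_1^\infty p$ and $\mathbb{P}'=\vartimes_1^\infty p'$ are either mutually absolutely continuous or mutually singular, and mutual singularity holds precisely when $\prod_{b\in B}(\sqrt{p_b}\,\sqrt{p_b'}\,+\,\cdots)$ fails the Hellinger-integral convergence criterion; concretely, because all coordinates are i.i.d., the per-coordinate Hellinger affinity $\rho:=\sum_{b\in B}\sqrt{p_b p_b'}$ satisfies $\rho<1$ whenever $p\neq p'$ (equality $\rho=1$ forces $p=p'$ by the equality case of Cauchy--Schwarz), hence $\prod_{k=1}^\infty \rho = 0$ and $\mathbb{P}\perp\mathbb{P}'$.

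The main step is then to transport this singularity through the address map $X:\Omega\to M$ of \thmref{pp}. Since $\mu=\mathbb{P}\circ X^{-1}$ and $\mu'=\mathbb{P}'\circ X^{-1}$ use the \emph{same} measurable map $X$, I would argue directly: pushforward is a monotone, $\sigma$-additive operation, so for any Borel $A\subset M$ we have $\mu(A)=\mathbb{P}(X^{-1}(A))$ and $\mu'(A)=\mathbb{P}'(X^{-1}(A))$. Choose, by $\mathbb{P}\perp\mathbb{P}'$, a Borel set $S\subset\Omega$ with $\mathbb{P}(S)=1$ and $\mathbb{P}'(S)=0$. The difficulty is that $S$ need not be of the form $X^{-1}(A)$, so one cannot naively push $S$ forward. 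I would instead work with the Hellinger/Radon--Nikodym description: writing $\mathbb{P}=\mathbb{P}_a+\mathbb{P}_s$ for the Lebesgue decomposition relative to $\mathbb{P}'$, singularity means $\mathbb{P}_a=0$; equivalently, the likelihood ratio $L_k:=\prod_{j=1}^k \frac{p'_{\omega_j}}{p_{\omega_j}}$ (a martingale under $\mathbb{P}$) converges to $0$ $\mathbb{P}$-a.s. Since $X$ is a measurable surjection, pulling back a Borel set of $M$ gives a Borel set of $\Omega$, but pushing forward requires a selection; the clean route is to use that mutual singularity of two measures is equivalent to the total variation distance being maximal, $\|\mathbb{P}-\mathbb{P}'\|_{TV}=2$ (equivalently the Hellinger affinity between them is $0$), and this quantity can only \emph{decrease} under pushforward by the data-processing inequality: $\|\mu-\mu'\|_{TV}=\|X_*\mathbb{P}-X_*\mathbb{P}'\|_{TV}\le\|\mathbb{P}-\mathbb{P}'\|_{TV}$. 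That inequality is the wrong direction for our purpose, so this is precisely where care is needed.

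To get the right direction I would argue via \emph{tail events}. The event $\{L_k\to 0\}$ is a tail event in the product $\sigma$-algebra of $\Omega$; by the structure of $X$ in \thmref{pp}, the value $X(\omega)$ is determined by the full word $\omega$, and the coordinates of $\omega$ are recovered from the nested intersection $\bigcap_k \tau_{\omega|_k}(M)$ only up to the non-overlap ambiguity, but crucially the \emph{law} of the coordinate sequence is recovered from $\mu$ via the self-similarity identity: iterating \eqref{eq:d7} shows $\mu(\tau_{\omega|_k}(W)) \approx p_{\omega|_k}$ in the non-overlap case, and in general one uses that the cylinder probabilities $p_f$ are encoded in $\mu$ through the IFS structure. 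Concretely, I would show that the $\mathbb{P}$-a.s. convergence $L_k\to 0$ descends to a statement about $\mu$-a.e. point of $M$: define $\ell(x)$ on $W$ as the a.s. limit along $\mu$-a.e. realization (using that $\tau_{\omega|_k}(W)$ shrinks to $x(\omega)$ and the ratio $\mu'(\tau_{\omega|_k}(W))/\mu(\tau_{\omega|_k}(W))$ of nested-set masses is comparable to $L_k$), obtaining that this ratio tends to $0$ for $\mu$-a.e. $x$ but (by symmetry, swapping the roles of $p,p'$) tends to $\infty$ for $\mu'$-a.e. $x$; the set where it tends to $0$ then has $\mu$-measure $1$ and $\mu'$-measure $0$. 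I expect the technical heart of the argument — and the main obstacle — to be the comparability of the nested-ball mass ratio $\mu'(\tau_{\omega|_k}(M))/\mu(\tau_{\omega|_k}(M))$ with the product $L_k^{-1}=\prod_{j\le k} p_{\omega_j}/p'_{\omega_j}$; in the non-overlap case this is essentially exact by \eqref{eq:d7}, while in the overlapping case one needs an extra argument (e.g. restricting to a full-measure non-overlapping subsystem, or using that overlaps are $\mu$-null, which is automatic when $X$ is $\mathbb{P}$-a.e. injective). Granting this comparability, the conclusion $\mu\perp\mu'$ is immediate.
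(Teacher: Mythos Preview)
Your core step---invoking Kakutani's dichotomy to get $\mathbb{P}\perp\mathbb{P}'$ from $p\neq p'$---is exactly the paper's argument: the paper's entire proof is the single sentence ``This is an application of Kakutani's theorem on infinite product measures,'' with no further detail. Where you diverge is in worrying about the pushforward step $(\mathbb{P}\perp\mathbb{P}')\Rightarrow(\mu\perp\mu')$, which the paper leaves completely implicit.

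Your concern here is legitimate: pushforward by a common measurable map can only \emph{decrease} total variation, so it does not automatically preserve mutual singularity, and the paper's one-liner is, strictly speaking, incomplete in full generality. Your proposed repair via nested-cylinder mass ratios is sound; in the non-overlap case iterating \eqref{eq:d7} gives $\mu(\tau_{\omega|_k}(W))=p_{\omega|_k}$ exactly, so the ratio $\mu'(\tau_{\omega|_k}(W))/\mu(\tau_{\omega|_k}(W))$ equals $L_k$ (not $L_k^{-1}$ as you wrote---a harmless slip), and the SLLN on $\log L_k$ finishes the job. A shorter route you might mention instead: for a contractive IFS the address map $X:\Omega\to W$ is a continuous surjection between compact metrizable spaces, and under the non-overlap hypothesis (\defref{no}) it is a bijection, hence a Borel isomorphism; then for any $S\subset\Omega$ witnessing $\mathbb{P}\perp\mathbb{P}'$ the set $A=X(S)$ is Borel with $X^{-1}(A)=S$, and $\mu(A)=1$, $\mu'(A)=0$ directly. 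In the overlapping case one really does need an extra argument (e.g., that the multi-address set is $\mathbb{P}$- and $\mathbb{P}'$-null), which neither the paper nor a bare citation of Kakutani supplies; your martingale approach is one honest way to handle that.
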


\begin{proof}
This is an application of Kakutani's theorem on infinite product measures.
See \cite{MR0014404,MR0023331}.
\end{proof}
\begin{rem}[Affine IFSs]
\begin{flushleft}
Let $B=\left\{ b_{1},\cdots,b_{N}\right\} $ be a subset of $\mathbb{R}^{d}$,
and fix a $d\times d$ matrix $M$. Assume $M$ is expansive, i.e.,
$\left|\lambda\right|>1$, for all eigenvalues $\lambda$ of $M$.
Then the mapping $\Omega=\left\{ 1,\cdots,N\right\} ^{\mathbb{N}}\xrightarrow{\;X\;}W_{B}$
from (\ref{eq:d5}) is given by 
\[
\omega=\left(i_{1},i_{2},i_{3}\cdots\right)\longmapsto x:=\sum_{j=1}^{\infty}M^{-j}b_{i_{j}}.
\]
Note that $x$ has a random expansion, with the alphabets $b_{i}\in B$,
as a sequence of i.i.d. random variables with distribution $p=\left(p_{1},\cdots,p_{N}\right)$.
\par\end{flushleft}
\end{rem}

\section{\label{sec:Sie}Sierpinski and random power series}

Given a probability measure $\mu$ on $I^{d}$ where $I=\left[0,1\right]$,
a key property that $\mu$ may, or may not, have is that the Fourier
frequencies $\left\{ e_{n}\right\} _{n\in\mathbb{N}^{d}}$ are \emph{total}
in $L^{2}\left(\mu\right)$, i.e., that the closed span of $\left\{ e_{n}\right\} _{n\in\mathbb{N}^{d}}$
is $L^{2}\left(\mu\right)$.

The result in $d=1$, that, if $\nu$ on $I$ is singular, then the
set $\left\{ e_{n}\right\} _{n\in\mathbb{N}_{0}}$ is total in $L^{2}\left(\nu\right)$,
fails for $d=2$. There are examples when $\mu$ on $I^{2}$ is positive,
singular w.r.t. the 2D Lebesgue measure, but $\left\{ e_{n}\right\} _{n\in\mathbb{N}_{0}^{2}}$
is \emph{not} total in $L^{2}\left(\mu\right)$.
\begin{example}
Take $\mu=\lambda_{1}\times\nu$ (see \figref{pm}), where $\lambda_{1}$
is Lebesgue measure and $\nu$ is a singular measure in $I$, then
$\left\{ e_{n}\right\} _{n\in\mathbb{N}_{0}^{2}}$ is \emph{not} total
in $L^{2}\left(\mu\right)$.
\end{example}

\begin{figure}[H]
\begin{centering}
\includegraphics[width=0.2\columnwidth]{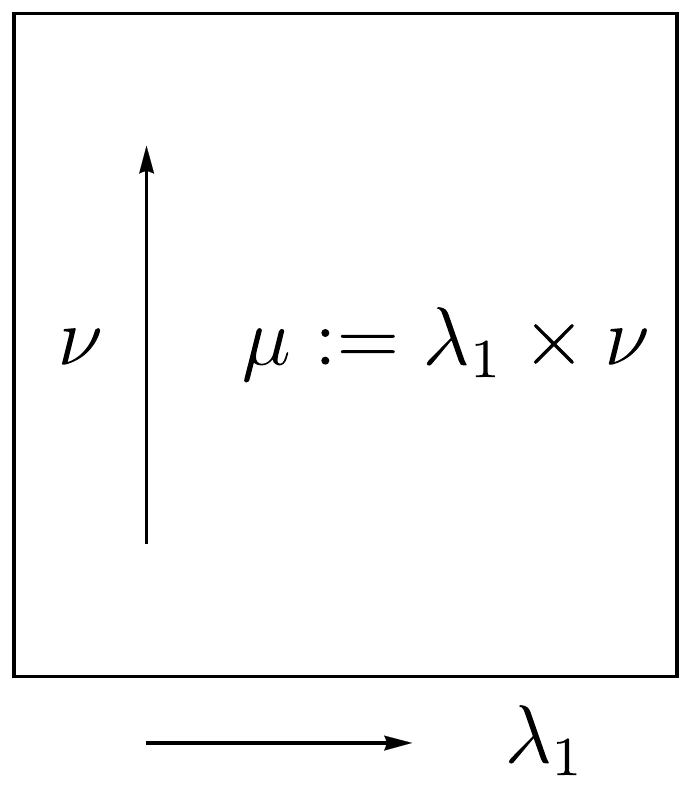}
\par\end{centering}
\caption{\label{fig:pm}$\lambda_{1}=$ Lebesgue, $\nu\perp\lambda_{1}$}
\end{figure}

For the Sierpinski case (affine IFS), with the Sierpinski measure
$\mu$, total does hold in $L^{2}\left(\mu\right)$. See details below.

Let the alphabets be 
\begin{equation}
B=\left\{ b_{0},b_{1},b_{2}\right\} :=\left\{ \begin{bmatrix}0\\
0
\end{bmatrix},\begin{bmatrix}1\\
0
\end{bmatrix},\begin{bmatrix}0\\
1
\end{bmatrix}\right\} .\label{eq:F1}
\end{equation}
Set
\begin{equation}
M=\begin{bmatrix}2 & 0\\
0 & 2
\end{bmatrix},\;\text{and}\quad\tau_{j}\left(x\right)=M^{-1}\left(x+b_{j}\right).\label{eq:F2}
\end{equation}
The Sierpinski gasket (\figref{isp}) is the IFS attractor $W$ satisfying
\[
W=\bigcup_{j=0}^{2}\tau_{j}\left(W\right).
\]

\begin{figure}
\begin{tabular}{ccc}
\includegraphics[width=0.2\columnwidth]{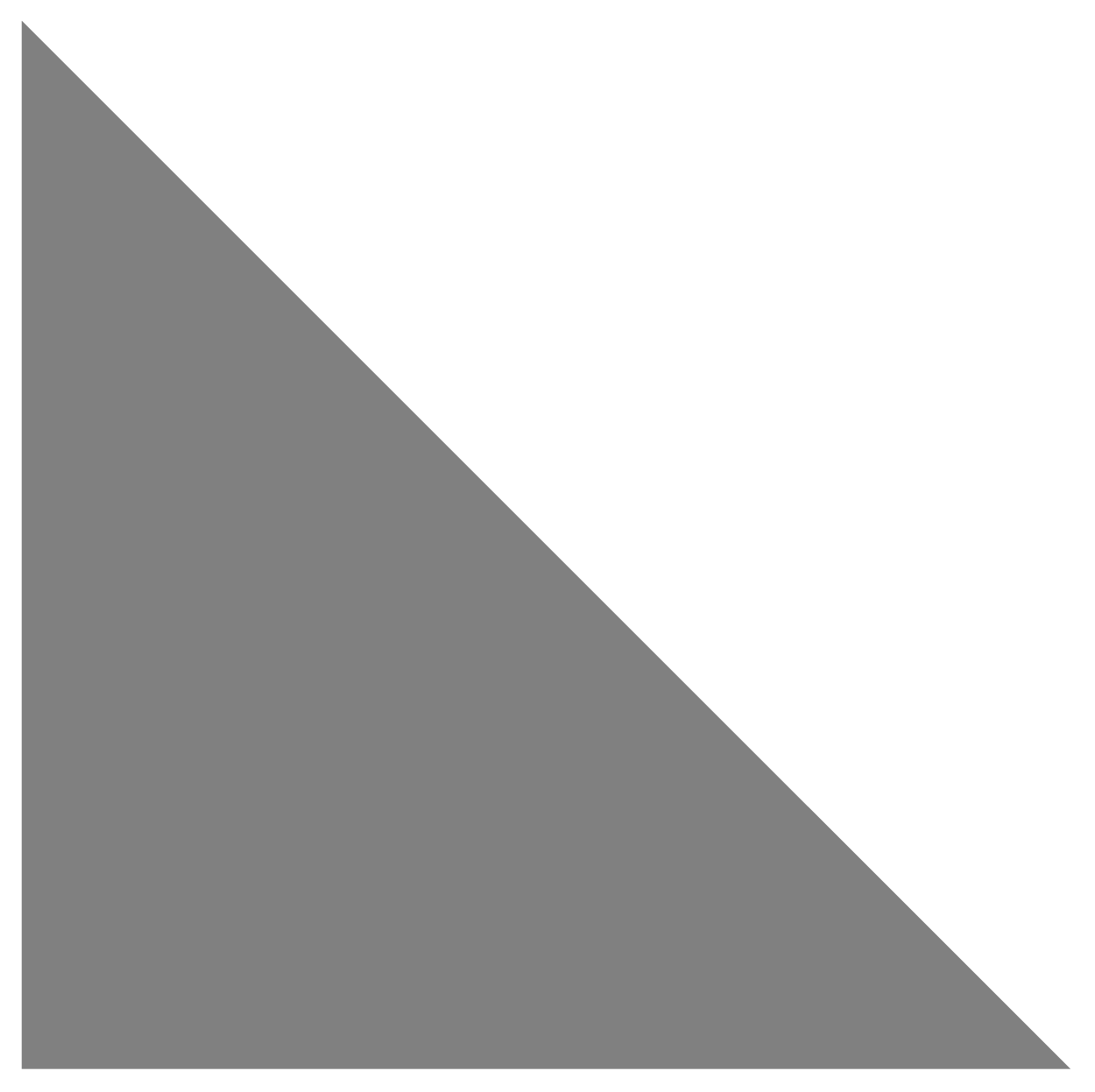} & \includegraphics[width=0.2\columnwidth]{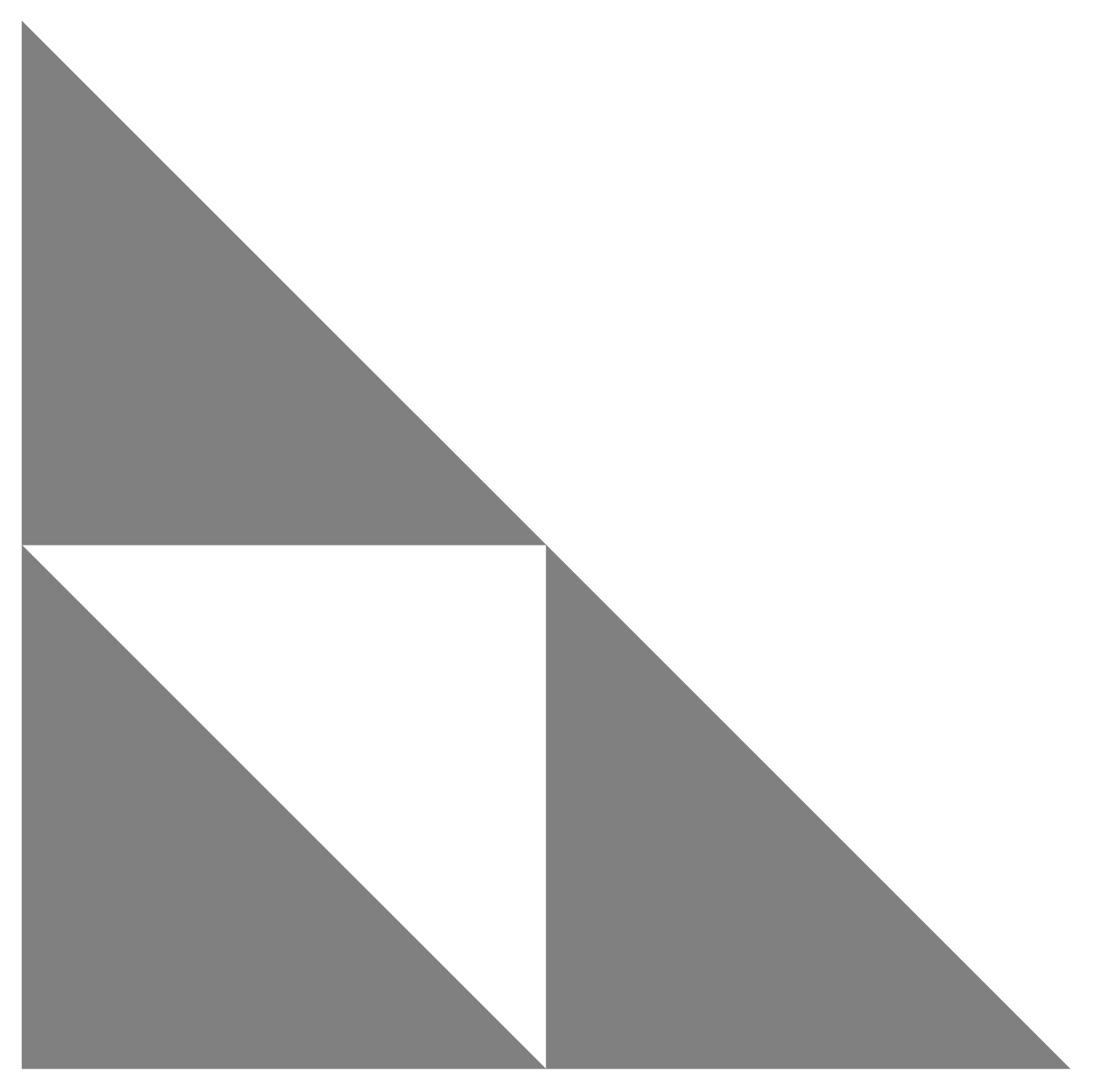} & \includegraphics[width=0.2\columnwidth]{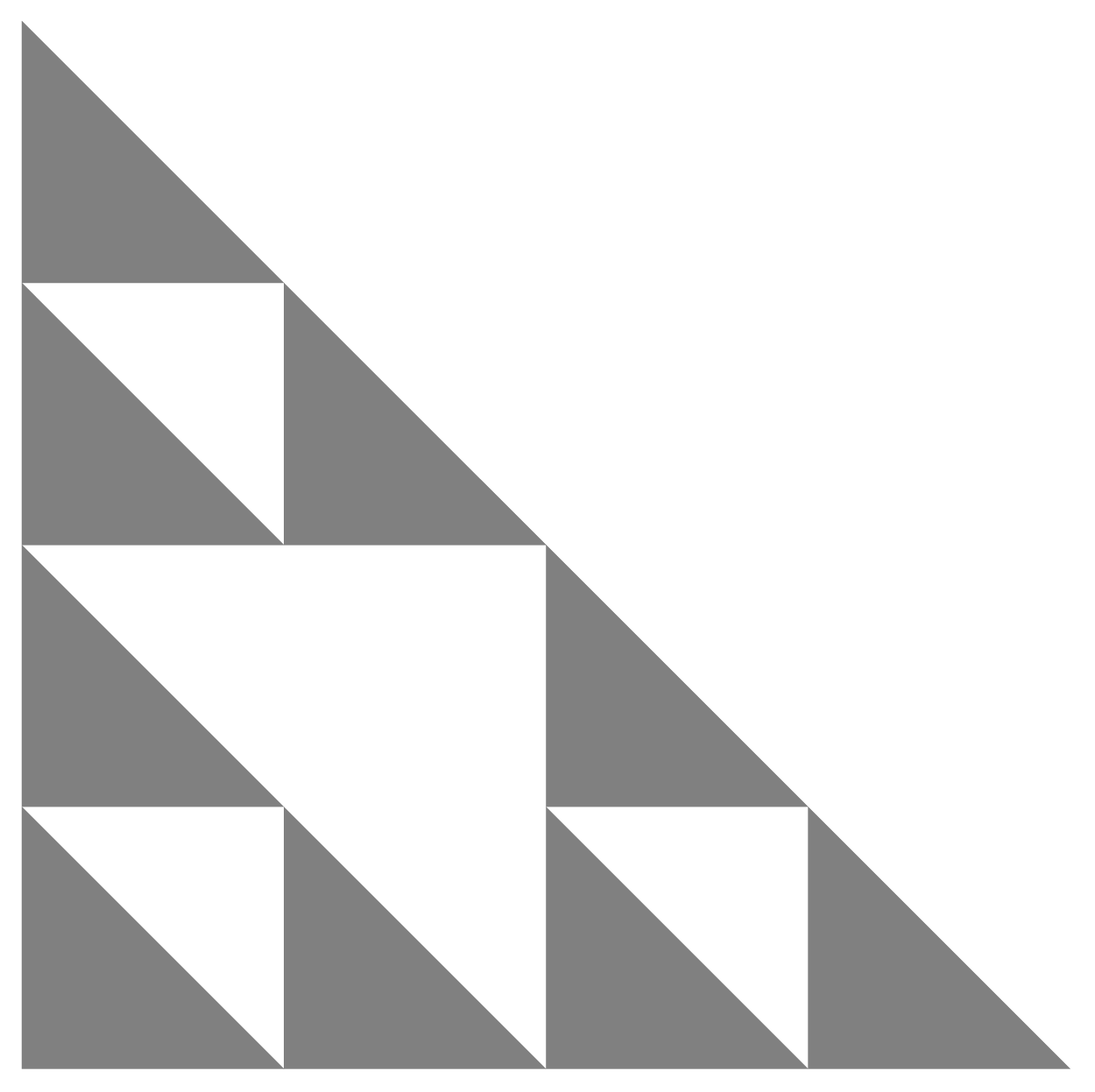}\tabularnewline
\includegraphics[width=0.2\columnwidth]{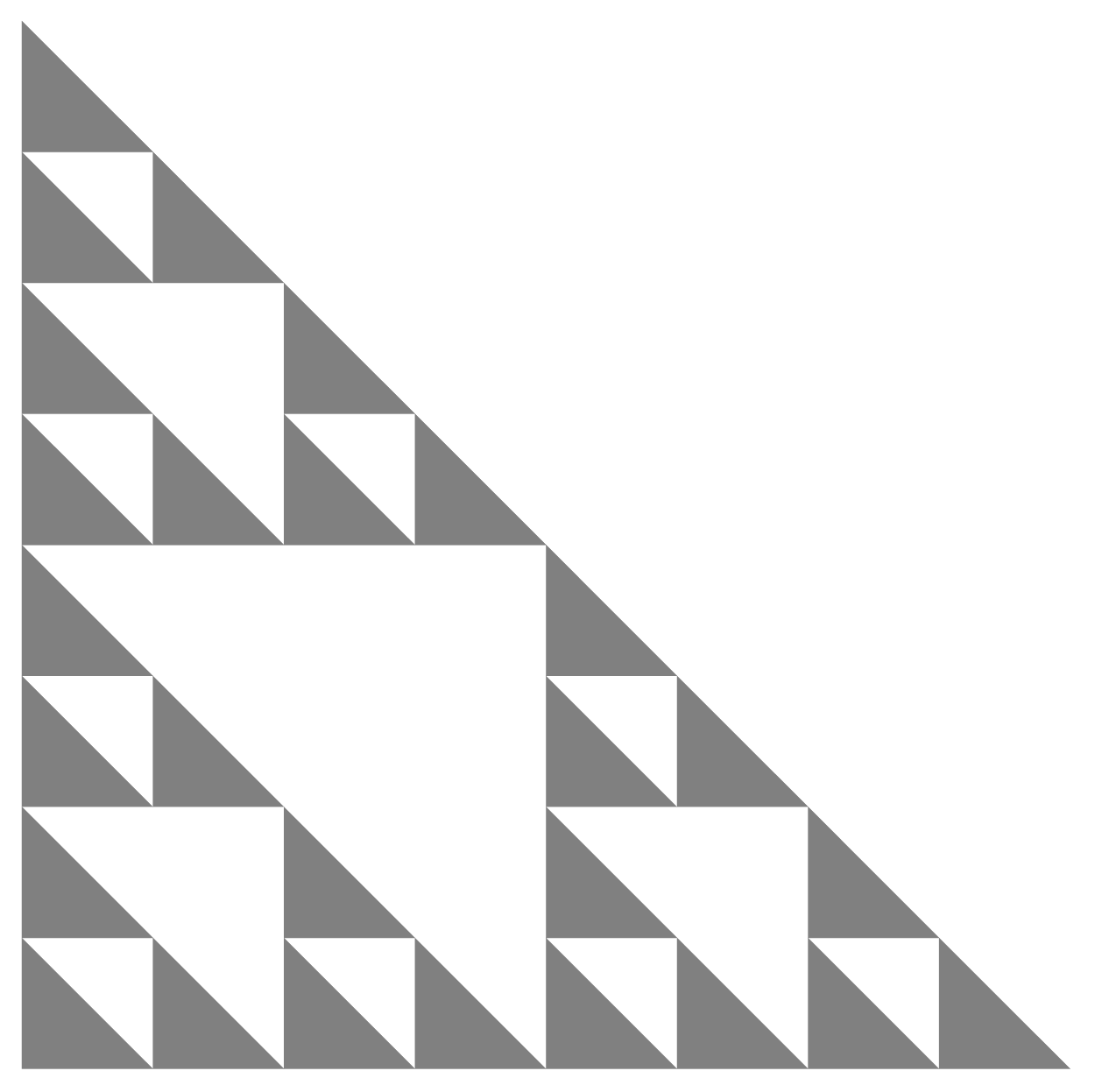} & \includegraphics[width=0.2\columnwidth]{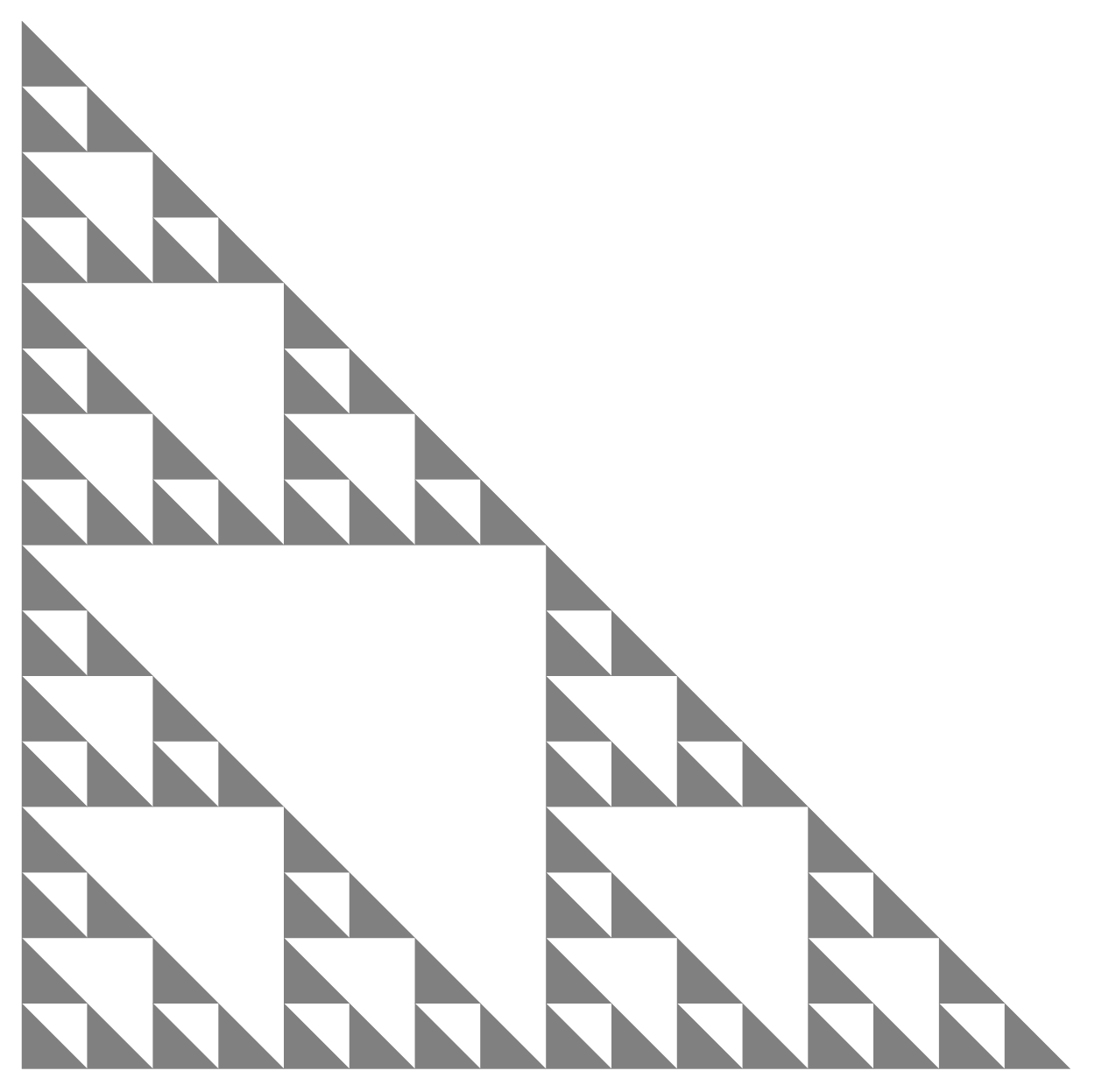} & \includegraphics[width=0.2\columnwidth]{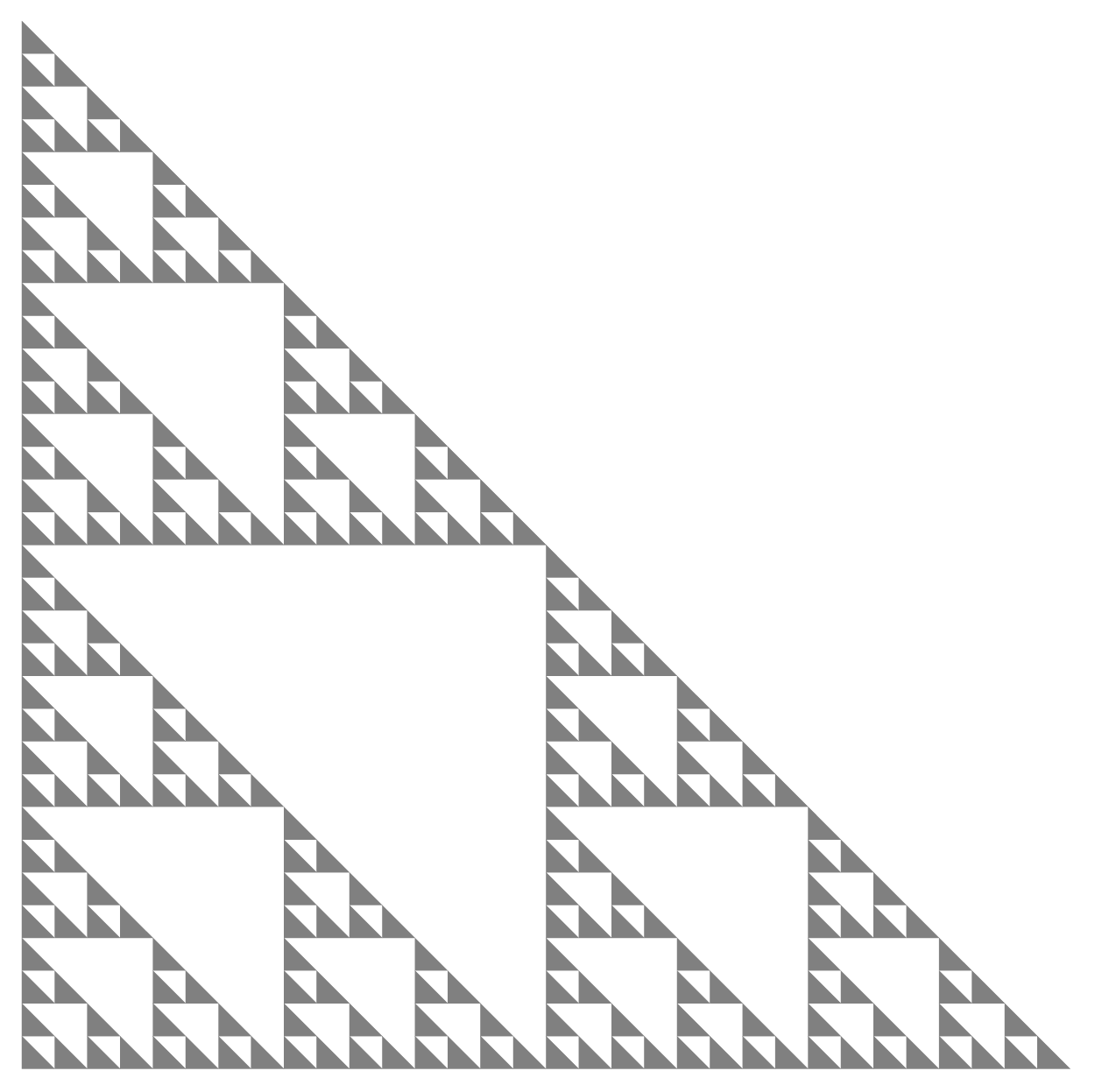}\tabularnewline
\end{tabular}

\caption{\label{fig:isp}Construction of the Sierpinski gasket.}

\end{figure}

We have the random variable $B^{\mathbb{N}}\xrightarrow{\;X\;}W$,
given by
\begin{equation}
\omega=\left(b_{i_{1}},b_{i_{2}},b_{i_{3}},\cdots\right)\longmapsto x=\sum_{k=1}^{\infty}M^{-k}b_{i_{k}}.\label{eq:F3}
\end{equation}

As a Cantor set, $W$ (the Sierpinski gasket) is the boundary of the
tree symbol representation; see \figref{wtree}.

\begin{figure}
\includegraphics[width=0.7\columnwidth]{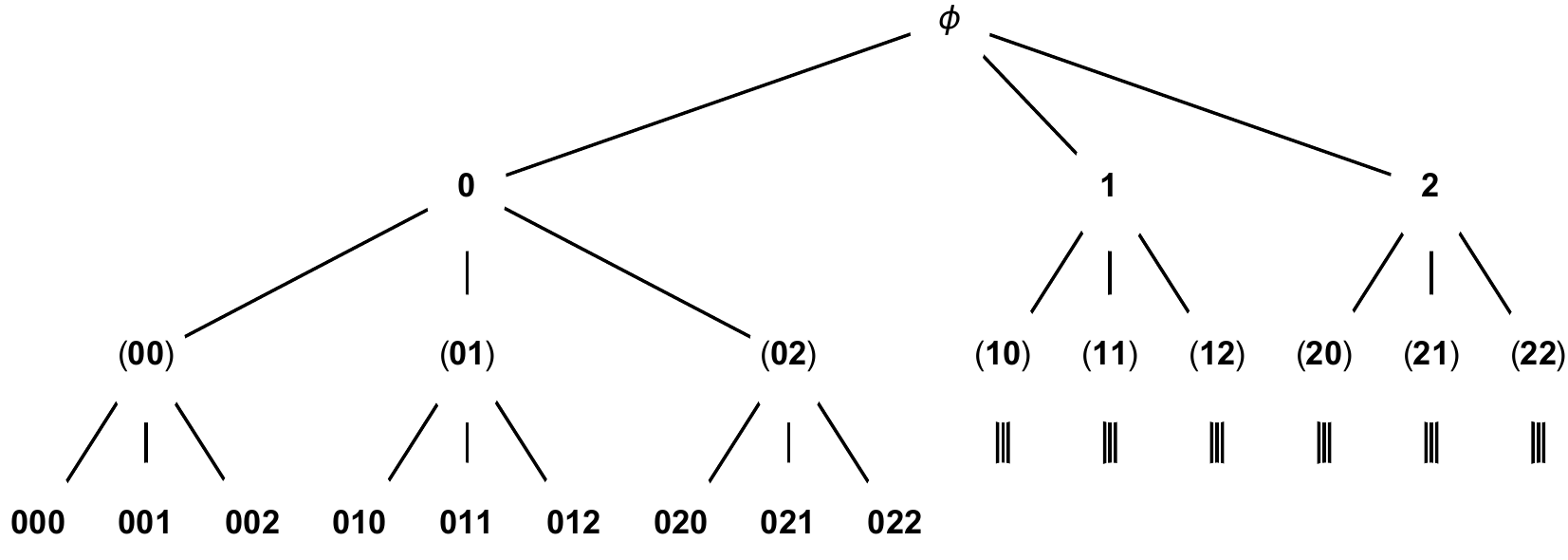}

\caption{\label{fig:wtree}Symbol representations of infinite words.}
\end{figure}

Recall that every $\omega\in B^{\mathbb{N}}$ is an infinite word
$\omega=\left(b_{i_{1}},b_{i_{2}},b_{i_{3}},\cdots\right)$, with
$i_{k}\in\left\{ 0,1,2\right\} $. Setting $\omega\big|_{n}=\left(b_{i_{1}},\cdots,b_{i_{n}}\right)$,
a finite truncated word, and $\tau_{\omega|_{n}}=\tau_{i_{n}}\circ\cdots\circ\tau_{i_{1}}$;
then $\bigcap_{n}\tau_{\omega|_{n}}\left(W\right)=\left\{ x\right\} $,
i.e., the intersection is a singleton. And we set $X\left(\omega\right)=x$.

Let $p$ be the probability distribution on $B$, where 
\begin{equation}
p=\left(\frac{1}{3},\frac{1}{3},\frac{1}{3}\right).\label{eq:F4}
\end{equation}
Let $\mathbb{P}=\vartimes_{1}^{\infty}p$, and $\mu=\mathbb{P}\circ X^{-1}$
be the corresponding IFS measure, i.e., $\mu$ is the unique Borel
probability measure on $W$, s.t. 
\begin{equation}
d\mu=\frac{1}{3}\sum_{j=0}^{2}\mu\circ\tau_{j}^{-1}.\label{eq:si3}
\end{equation}
See \secref{gifs} for details.
\begin{rem}
~
\begin{enumerate}
\item The Hausdorff dimension of $W$ is $\ln3/\ln2$, where $3=\#\left\{ B\right\} $
and $2=$ scaling number.
\item Let $O_{j}$ be the triangles removed from the $j^{th}$ iteration
(\figref{isp}), and let $O=\bigcup_{j=1}^{\infty}O_{j}$. Then, 
\[
\lambda_{2}\left(O\right)=\lambda_{2}\left(\bigcup\nolimits _{j=1}^{\infty}O_{j}\right)=\frac{1}{2}\left[\frac{1}{4}+\frac{3}{4^{2}}+\frac{3^{2}}{4^{3}}+\cdots\right]=\frac{1}{2};
\]
and so $\lambda_{2}\left(W\right)=0$, where $\lambda_{2}$ denotes
the 2D Lebesgue measure. Note that $\mu\left(W\right)=1$.
\end{enumerate}
\end{rem}

\begin{lem}
Let $W$ be the Sierpinski gasket, and $\mu$ be the corresponding
IFS measure. Let $\widehat{\mu}$ be the Fourier transform of $\mu$,
i.e., $\widehat{\mu}\left(\lambda\right):=\int_{W}e^{i2\pi\lambda\cdot x}d\mu\left(x\right)$.
Then 
\begin{equation}
\widehat{\mu}\left(\lambda\right)=\frac{1}{3}\left[1+e^{i\pi\lambda_{1}}+e^{i\pi\lambda_{2}}\right]\widehat{\mu}\left(\lambda/2\right),\label{eq:sm}
\end{equation}
where $\lambda=\left(\lambda_{1},\lambda_{2}\right)\in\mathbb{R}^{2}$.
\end{lem}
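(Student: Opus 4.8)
The plan is to feed the single test function $f(x)=e^{i2\pi\lambda\cdot x}$ into the IFS invariance identity for $\mu$. Since $f$ is bounded and continuous on $W$, it is in particular a Borel function, so by \thmref{pp}\eqref{enu:pp2} (equation \eqref{eq:d8}), specialized to the Sierpinski data \eqref{eq:F1}--\eqref{eq:F4} and the normalized weights \eqref{eq:si3},
\[
\widehat{\mu}\left(\lambda\right)=\int_{W}e^{i2\pi\lambda\cdot x}\,d\mu\left(x\right)=\frac{1}{3}\sum_{j=0}^{2}\int_{W}e^{i2\pi\lambda\cdot\tau_{j}\left(x\right)}\,d\mu\left(x\right).
\]

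Next I would compute the composition $f\circ\tau_j$ explicitly. From \eqref{eq:F2} we have $M=2I$, hence $\tau_{j}\left(x\right)=M^{-1}\left(x+b_{j}\right)=\tfrac12\left(x+b_{j}\right)$, so
\[
e^{i2\pi\lambda\cdot\tau_{j}\left(x\right)}=e^{i\pi\lambda\cdot\left(x+b_{j}\right)}=e^{i\pi\lambda\cdot b_{j}}\,e^{i\pi\lambda\cdot x}.
\]
The factor $e^{i\pi\lambda\cdot b_{j}}$ does not depend on $x$ and can be pulled out of the integral, while the remaining integral $\int_{W}e^{i\pi\lambda\cdot x}\,d\mu\left(x\right)$ is, by the very definition of $\widehat{\mu}$ with $2\pi\left(\lambda/2\right)=\pi\lambda$, precisely $\widehat{\mu}\left(\lambda/2\right)$. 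Combining,
\[
\widehat{\mu}\left(\lambda\right)=\frac{1}{3}\left(\sum_{j=0}^{2}e^{i\pi\lambda\cdot b_{j}}\right)\widehat{\mu}\left(\lambda/2\right).
\]

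Finally I would evaluate the exponential sum using the alphabet \eqref{eq:F1}: $\lambda\cdot b_{0}=0$, $\lambda\cdot b_{1}=\lambda_{1}$, $\lambda\cdot b_{2}=\lambda_{2}$, so $\sum_{j=0}^{2}e^{i\pi\lambda\cdot b_{j}}=1+e^{i\pi\lambda_{1}}+e^{i\pi\lambda_{2}}$, which yields \eqref{eq:sm}. I do not anticipate a genuine obstacle here: the argument is a one-line application of the self-similarity identity to an exponential, the only points requiring a word of care being (i) checking that the scaling convention $\tau_j(x)=\tfrac12(x+b_j)$ produces the halved argument $\lambda/2$ and the phase $e^{i\pi\lambda\cdot b_j}$ rather than $e^{i2\pi\lambda\cdot b_j}$, and (ii) noting that $e^{i2\pi\lambda\cdot x}$ is an admissible test function in \eqref{eq:d8} because it is bounded and continuous on the compact set $W$.
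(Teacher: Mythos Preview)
Your proposal is correct and follows essentially the same route as the paper: apply the self-similarity identity \eqref{eq:si3} (equivalently \eqref{eq:d8}) to the exponential $e^{i2\pi\lambda\cdot x}$, use $\tau_j(x)=\tfrac12(x+b_j)$ to split off the phase factors $e^{i\pi\lambda\cdot b_j}$, and recognize the remaining integral as $\widehat{\mu}(\lambda/2)$. The paper's proof is the same computation written out term by term.
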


\begin{proof}
Immediate from (\ref{eq:si3}). More specifically, we have 
\begin{align*}
\widehat{\mu}\left(\lambda\right) & =\frac{1}{3}\sum_{j=0}^{2}\int_{W}e^{i2\pi\lambda\cdot\tau_{j}\left(x\right)}d\mu\left(x\right)\\
 & =\frac{1}{3}\Big(\int_{W}e^{i2\pi\lambda\cdot x/2}d\mu\left(x\right)+\int_{W}e^{i2\pi\lambda/2\cdot\left(x+\left(1,0\right)\right)}d\mu\left(x\right)\\
 & \qquad+\int_{W}e^{i2\pi\lambda/2\cdot\left(x+\left(0,1\right)\right)}d\mu\left(x\right)\Big)\\
 & =\frac{1}{3}\left(1+e^{i\pi\lambda_{1}}+e^{i\pi\lambda_{2}}\right)\widehat{\mu}\left(\lambda/2\right),
\end{align*}
which is the assertion (\ref{eq:sm}).
\end{proof}
By general theory (see \secref{SSM}), the IFS measure $\mu$ as in
(\ref{eq:si3}) has a disintegration 
\begin{equation}
d\mu=\int_{0}^{1}\sigma^{x}\left(dy\right)d\xi\left(x\right),\label{eq:si4}
\end{equation}
where 
\begin{equation}
\xi=\mu\circ\pi_{1}^{-1}\label{eq:si5}
\end{equation}
with $supp\left(\xi\right)\subset\left[0,1\right]$. Note, if $S\subset\left[0,1\right]$
is a measurable subset, then 
\begin{equation}
\xi\left(S\right)=\mu\left(\left\{ \left(x,y\right)\mid x\in S\right\} \right).\label{eq:si6}
\end{equation}

\begin{lem}
\label{lem:sW}Let $W$ be the Sierpinski gasket. Then points in $W$
are represented as random power series 
\begin{equation}
\begin{bmatrix}x\\
y
\end{bmatrix}\in W\Longleftrightarrow\left\{ \begin{matrix}x=\sum_{k=1}^{\infty}\varepsilon_{k}2^{-k}\\
y=\sum_{k=1}^{\infty}\eta_{k}2^{-k}
\end{matrix}\right.\label{eq:F9}
\end{equation}
where $\left(\varepsilon_{k}\right),\left(\eta_{k}\right)$ are defined
on $\Omega=\left\{ 0,1\right\} ^{\mathbb{N}}$, i.e., the binary probability
space.

Moreover, $\varepsilon_{k}$ is i.i.d. on $\left\{ 0,1\right\} $,
$k\in\mathbb{N}$, with distribution $\left(2/3,1/3\right)$. That
is, $Prob\left(\varepsilon_{k}=0\right)=2/3$, and $Prob\left(\varepsilon_{k}=1\right)=1/3$.
The same conclusion holds for $\eta_{k}$ as well.
\end{lem}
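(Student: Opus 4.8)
The plan is to read the claim straight off the address map $X\colon B^{\mathbb N}\to W$ of \eqref{eq:F3} (the specialization of \thmref{pp} to this IFS), using that here $M^{-1}=\tfrac12 I_2$. Concretely, $M^{-k}b_{i_k}=2^{-k}b_{i_k}$, so for a word $\omega=(b_{i_1},b_{i_2},\dots)\in B^{\mathbb N}$ one has
\[
X(\omega)=\sum_{k=1}^\infty 2^{-k}b_{i_k}
=\Bigl(\,\sum_{k\ge 1}2^{-k}(b_{i_k})_1\,,\ \sum_{k\ge 1}2^{-k}(b_{i_k})_2\,\Bigr),
\]
the series converging absolutely since each $b_{i_k}$ has entries in $\{0,1\}$. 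Setting $\varepsilon_k:=(b_{i_k})_1$ and $\eta_k:=(b_{i_k})_2$, the alphabet \eqref{eq:F1} forces $(\varepsilon_k,\eta_k)\in\{(0,0),(1,0),(0,1)\}$, so each of $(\varepsilon_k)_k,(\eta_k)_k$ is a $\{0,1\}$-valued sequence, i.e.\ an element of $\{0,1\}^{\mathbb N}$. The equivalence \eqref{eq:F9} then comes for free: the forward implication is surjectivity of $X$ onto $W$ (noted right after \thmref{pp}), and the reverse implication holds because any pair of such series is $X(\omega)$ for the word whose $k$-th letter is the unique $b\in B$ with coordinates $(\varepsilon_k,\eta_k)$.

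For the distributional statement I would argue as follows. Under $\mathbb P=\vartimes_1^\infty p$ with $p=(\tfrac13,\tfrac13,\tfrac13)$ the letters $b_{i_k}$ are i.i.d.\ and uniform on $B$. Since $\varepsilon_k$ is a function of the single letter $b_{i_k}$ — explicitly $\varepsilon_k=1\iff b_{i_k}=b_1$ and $\varepsilon_k=0\iff b_{i_k}\in\{b_0,b_2\}$ — the sequence $(\varepsilon_k)_k$ inherits independence, and pushing the uniform law on $B$ forward along $b\mapsto(b)_1$ gives mass $p_1=\tfrac13$ at $1$ and $p_0+p_2=\tfrac23$ at $0$. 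Hence $(\varepsilon_k)_k$ is i.i.d.\ with $\mathrm{Prob}(\varepsilon_k=1)=\tfrac13$ and $\mathrm{Prob}(\varepsilon_k=0)=\tfrac23$. The same argument with $\eta_k=1\iff b_{i_k}=b_2$ gives the identical law for $(\eta_k)_k$.

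There is no real obstacle here — the proof is just an unwinding of \eqref{eq:F3} — but one point deserves an explicit remark: the ``moreover'' asserts the marginal law of $(\varepsilon_k)_k$ and of $(\eta_k)_k$ \emph{separately}. The two sequences are \emph{not} jointly independent, because $B$ contains no letter $(1,1)$ (equivalently $\varepsilon_k+\eta_k\le 1$ for every $k$), so $\varepsilon_k$ and $\eta_k$ are negatively correlated at each $k$; this should be stated but is not needed for the lemma as phrased.
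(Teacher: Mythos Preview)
Your proof is correct and follows the same basic approach as the paper: both unwind the address map \eqref{eq:F3} coordinatewise to obtain the binary expansions, and both read off the marginal law of $\varepsilon_k$ (and $\eta_k$) from the uniform distribution on $B$.

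The one difference worth noting is in how the marginal of $\eta_k$ is established. You compute it directly by pushforward along $b\mapsto(b)_2$, which is the cleanest route to the lemma as stated. The paper instead first records the \emph{conditional} probabilities $Pr(\eta_k=0\mid\varepsilon_k=0)=1/2$, $Pr(\eta_k=0\mid\varepsilon_k=1)=1$, etc., and then recovers the marginal via the law of total probability. This detour is not needed for the lemma itself, but it is not idle: those conditional probabilities are exactly the entries of the Markov transition matrix exploited in the Perron--Frobenius discussion immediately afterward (\remref{mca}) and in the slice-singularity argument of \lemref{sW2}. Your closing remark that $\varepsilon_k$ and $\eta_k$ are not jointly independent is the qualitative version of the same observation; the paper just makes the dependence quantitative because it needs those numbers downstream.
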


\begin{proof}
This follows from (\ref{eq:F3}) and (\ref{eq:F4}).

In detail, let $X:B^{\mathbb{N}}\rightarrow W$ be the random variable
from (\ref{eq:F3}), $X\left(\omega\right)=\sum_{k=1}^{\infty}M^{-k}b_{i_{k}}$,
for all $\omega\in B^{\mathbb{N}}$; then 
\begin{align*}
W\ni\begin{bmatrix}x\\
y
\end{bmatrix} & =X\left(\omega\right)=\sum_{k=1}^{\infty}\begin{bmatrix}2^{-k} & 0\\
0 & 2^{-k}
\end{bmatrix}b_{i_{k}}\\
 & =\sum_{k=1}^{\infty}\begin{bmatrix}2^{-k} & 0\\
0 & 2^{-k}
\end{bmatrix}\left\{ \begin{bmatrix}0\\
0
\end{bmatrix},\begin{bmatrix}1\\
0
\end{bmatrix},\begin{bmatrix}0\\
1
\end{bmatrix}\right\} \\
 & =\begin{bmatrix}\sum_{k=1}^{\infty}2^{-k}\varepsilon_{k}\left(x\right)\\
\sum_{k=1}^{\infty}2^{-k}\eta_{k}\left(x\right)
\end{bmatrix},
\end{align*}
where 
\begin{align*}
Pr\left(\varepsilon_{k}=0\right) & =Pr\left(\eta_{k}=0\right)=2/3,\\
Pr\left(\varepsilon_{k}=1\right) & =Pr\left(\eta_{k}=1\right)=1/3.
\end{align*}
Also we have the following conditional probabilities: 
\begin{align*}
Pr\left(\eta_{k}=0\mid\varepsilon_{k}=0\right) & =1/2,\\
Pr\left(\eta_{k}=1\mid\varepsilon_{k}=0\right) & =1/2,\\
Pr\left(\eta_{k}=0\mid\varepsilon_{k}=1\right) & =1.
\end{align*}
One checks that 
\begin{align}
Pr\left(\eta_{k}=0\right) & =Pr\left(\eta_{k}=0\mid\varepsilon_{k}=0\right)Pr\left(\varepsilon_{k}=0\right)\nonumber \\
 & \qquad+Pr\left(\eta_{k}=0\mid\varepsilon_{k}=1\right)Pr\left(\varepsilon_{k}=1\right)=\frac{1}{2}\cdot\frac{2}{3}+1\cdot\frac{1}{3}=\frac{2}{3},\label{eq:F10}\\
Pr\left(\eta_{k}=1\right) & =Pr\left(\eta_{k}=1\mid\varepsilon_{k}=0\right)Pr\left(\varepsilon_{k}=0\right)\nonumber \\
 & \qquad+Pr\left(\eta_{k}=1\mid\varepsilon_{k}=1\right)Pr\left(\varepsilon_{k}=1\right)=\frac{1}{2}\cdot\frac{2}{3}+0\cdot\frac{1}{3}=\frac{1}{3}.\label{eq:F11}
\end{align}
See the diagram in \figref{tp}.
\end{proof}
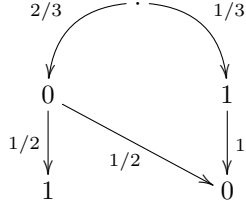
\begin{figure}
\[
\xymatrix{ & \cdot\ar@/^{1pc}/[rd]^{1/3}\ar@/_{1pc}/[ld]_{2/3}\\
0\ar[d]_{1/2}\ar[rrd]_{1/2} &  & 1\ar[d]^{1}\\
1 &  & 0
}
\]

\caption{\label{fig:tp}transition probabilities}

\end{figure}

\begin{lem}
\label{lem:sW2}Let $\mu$ be the IFS measure of the Sierpinski gasket
as above, and $\xi=\mu\circ\pi_{1}^{-1}$ be as in (\ref{eq:si4})--(\ref{eq:si6}),
so that $\mu$ has the disintegration in (\ref{eq:si4}).
\begin{enumerate}
\item Then the measure $\xi$ is singular and non-atomic. More precisely,
$\xi$ is the product measure $\vartimes_{1}^{\infty}\left\{ 2/3,1/3\right\} $
defined on $\left\{ 0,1\right\} ^{\mathbb{N}}$.
\item For a.a. $x$ w.r.t $\xi$, the measure $\sigma^{x}\left(dy\right)$
(in the $y$-variable) is singular. Hence $\mu$ is slice singular
(see \defref{SS}), and $\left\{ e_{n}\right\} _{n\in\mathbb{N}_{0}^{2}}$
is total in $L^{2}\left(\mu\right)$.
\end{enumerate}
\end{lem}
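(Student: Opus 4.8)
\textbf{Plan of proof of \lemref{sW2}.} The plan is to read everything off the random-power-series representation of \lemref{sW} together with the i.i.d.\ structure of the underlying IFS construction of \secref{gifs}. By \lemref{sW}, $\xi=\mu\circ\pi_{1}^{-1}$ is the distribution of $x=\sum_{k=1}^{\infty}\varepsilon_{k}2^{-k}$ with $(\varepsilon_{k})$ i.i.d.\ on $\{0,1\}$, $\mathrm{Pr}(\varepsilon_{k}=1)=1/3$; via the binary-expansion map $\{0,1\}^{\mathbb{N}}\to[0,1]$ (a bijection off a countable set) this identifies $\xi$ with the product measure $\vartimes_{1}^{\infty}\{2/3,1/3\}$, proving the second assertion of (i). Non-atomicity is immediate, since for any $x$ with digit sequence $(\varepsilon_{k})$ one has $\xi(\{x\})\le\prod_{k\le n}p_{\varepsilon_{k}}\le(2/3)^{n}\to0$. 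Singularity: by the strong law of large numbers, $\xi$-a.e.\ one has $\frac1n\sum_{k\le n}\varepsilon_{k}\to 1/3$, whereas Lebesgue-a.e.\ (Borel's normal-number theorem) the binary-digit average tends to $1/2$; these two limit sets are disjoint, so $\xi\perp\lambda_{1}$. (Equivalently, this is Kakutani's theorem for $\vartimes_{1}^{\infty}\{2/3,1/3\}$ vs.\ $\vartimes_{1}^{\infty}\{1/2,1/2\}$, in the spirit of \corref{kuk}.)

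For (ii) I would first pin down $\sigma^{x}$ precisely. In the IFS construction the letters $b_{i_{k}}$ are i.i.d., and the assignment $b_{0}\mapsto(0,0)$, $b_{1}\mapsto(1,0)$, $b_{2}\mapsto(0,1)$ shows that the pairs $(\varepsilon_{k},\eta_{k})$ are i.i.d.\ across $k$, with common law uniform on $\{(0,0),(1,0),(0,1)\}$. Hence, conditionally on the \emph{entire} sequence $(\varepsilon_{k})$, the $\eta_{k}$ remain independent, with $\eta_{k}\mid\varepsilon_{k}=0$ a fair coin flip and $\eta_{k}\mid\varepsilon_{k}=1$ deterministically $0$ — exactly the transition probabilities of \lemref{sW} (\figref{tp}). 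Thus $\sigma^{x}$ is the distribution of $\sum_{k\in S}\eta_{k}2^{-k}$ where $S=S(x)=\{k:\varepsilon_{k}(x)=0\}$ and $(\eta_{k})_{k\in S}$ are i.i.d.\ fair. By the SLLN used in (i), for $\xi$-a.a.\ $x$ the complement $S^{c}$ is infinite (of density $1/3$). For such $x$, every $y\in\mathrm{supp}(\sigma^{x})$ has $k$-th binary digit $0$ for all $k\in S^{c}$ (off the countable dyadic set), so $\mathrm{supp}(\sigma^{x})$ is contained in the union of the length-$N$ dyadic cylinders compatible with this constraint, whose total Lebesgue measure is $2^{\#(S\cap[1,N])}\cdot2^{-N}=2^{-\#(S^{c}\cap[1,N])}\to0$. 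Hence $\mathrm{supp}(\sigma^{x})$ is Lebesgue-null, i.e.\ $\sigma^{x}\perp\lambda_{1}$.

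Combining (i) and (ii) with \defref{SS} shows $\mu$ is slice singular, and \thmref{SM} then yields that $\{e_{n}\}_{n\in\mathbb{N}_{0}^{2}}$ is total in $L^{2}(\mu)$. I expect the main obstacle to be the bookkeeping in (ii): one must check that the Rohlin disintegration $\sigma^{x}$ genuinely coincides with the conditional law of $y$ given $x$, and that conditioning on the full sequence $(\varepsilon_{k})$ (not merely on $\varepsilon_{k}$ at a single site) preserves independence of the $\eta_{k}$ — this is precisely where the i.i.d.\ structure of the IFS letters is used — and then that "infinitely many forced-zero digits" really forces singularity, which is the cylinder-measure estimate $2^{-\#(S^{c}\cap[1,N])}\to0$ above rather than any statement about individual digits.
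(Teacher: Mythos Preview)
Your proposal is correct and follows essentially the same route as the paper: read off $\xi=\vartimes_{1}^{\infty}\{2/3,1/3\}$ from the random-power-series representation of \lemref{sW}, invoke Kakutani (the paper cites \corref{kuk}; you additionally give the SLLN/normal-number argument, which is equivalent) for $\xi\perp\lambda_{1}$, and then argue singularity of $\sigma^{x}$ from the digit structure before applying \thmref{SM}. The only real difference is in part~(ii): the paper simply asserts that $\sigma^{x}$ is ``the middle interval gap supported on $A(x)=\{y:(x,y)\in W\}$'' and declares it singular, whereas you actually identify $\sigma^{x}$ as the conditional law of $y$ given $(\varepsilon_{k})$, verify the independence of the $\eta_{k}$ under this conditioning, and give the explicit cylinder estimate $2^{-\#(S^{c}\cap[1,N])}\to 0$ --- this is the content the paper's sketch leaves to the reader, and your version is the more complete of the two.
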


\begin{proof}
For all points $\left(x,y\right)\in W$, let $x=\sum_{k=1}^{\infty}\varepsilon_{k}2^{-k}$,
$y=\sum_{k=1}^{\infty}\eta_{k}2^{-k}$ be as in (\ref{eq:F9}). Then
(\ref{eq:F10}) \& (\ref{eq:F11}) hold for all $x\in I$.

Therefore, we get the product measure $\xi=\vartimes_{1}^{\infty}\left\{ 2/3,1/3\right\} $
on the space $\Omega=\vartimes_{1}^{\infty}\left\{ 0,1\right\} $;
see \figref{spt}. By contrast, $\lambda=\vartimes_{1}^{\infty}\left\{ 1/2,1/2\right\} $
is Lebesgue measure; hence $\xi$ and $\lambda$ are mutually singular
by Kakutani's thoerem. (See \corref{kuk} above.)

Note that, for a.a. $x$, the measure $\sigma^{x}\left(dy\right)$
is the middle interval gap supported on $A\left(x\right)=\left\{ y\mid\left(x,y\right)\in W\right\} $,
and we conclude that $\sigma^{x}\left(dy\right)$ is singular w.r.t.
Lebesgue measure for a.a. $x$. By \thmref{SM}, it follows that $\left\{ e_{n}\right\} _{n\in\mathbb{N}_{0}^{2}}$
is total in $L^{2}\left(\mu\right)$.
\end{proof}
\begin{figure}
\begin{tabular}{cccc}
\includegraphics[width=0.2\columnwidth]{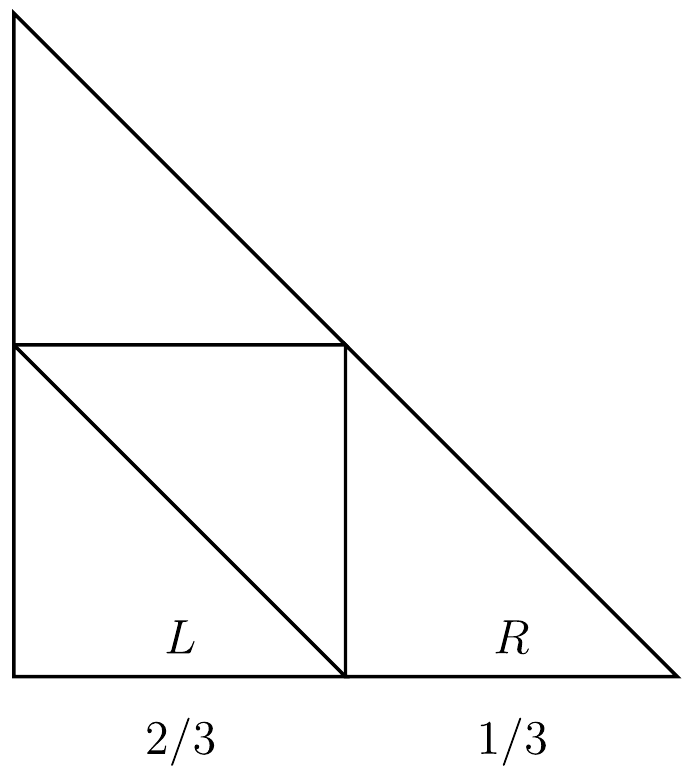} & \includegraphics[width=0.2\columnwidth]{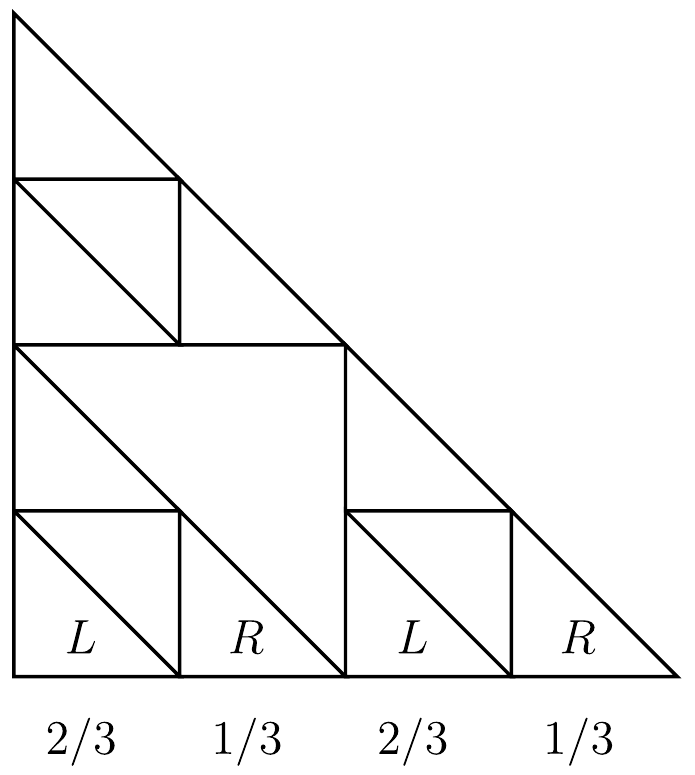} & \includegraphics[width=0.2\columnwidth]{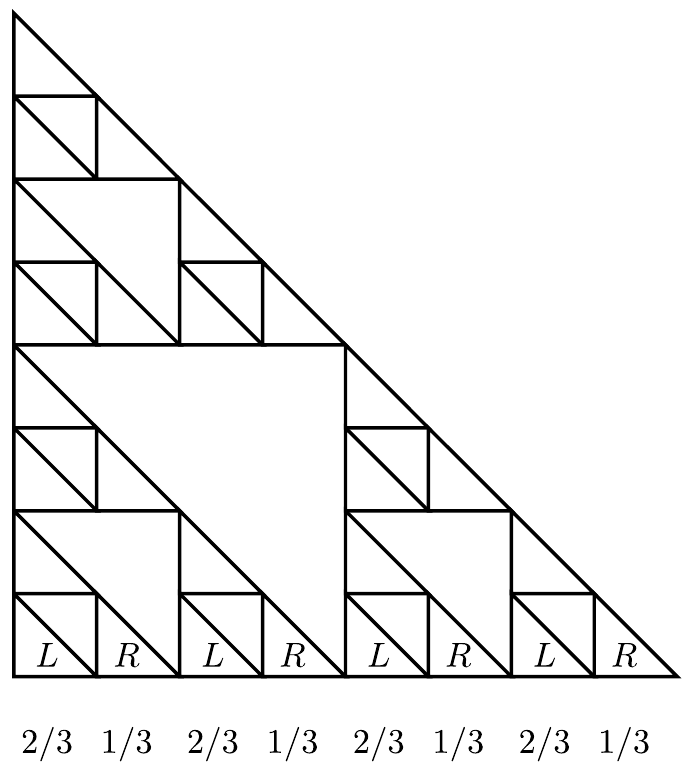} & \includegraphics[width=0.2\columnwidth]{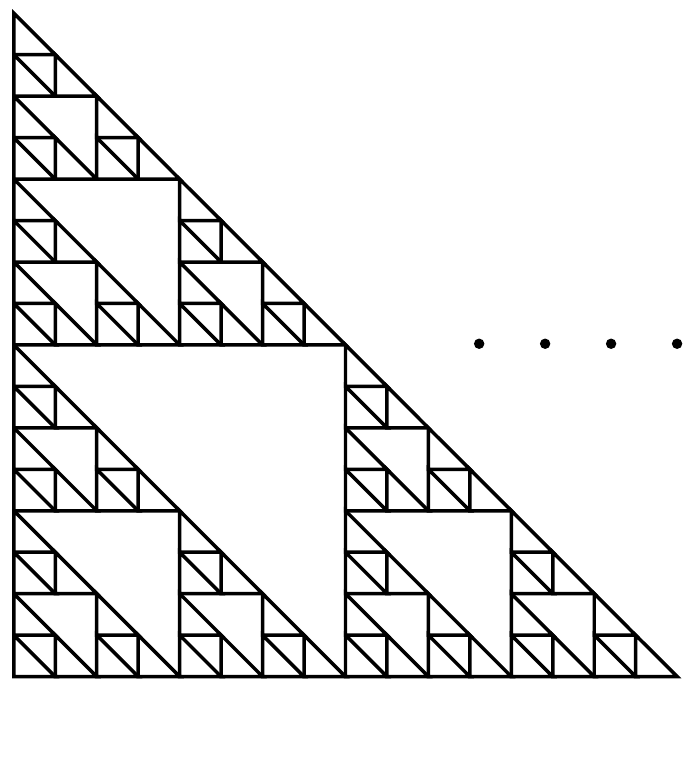}\tabularnewline
step 1 & step 2 & step 3 & step 4\tabularnewline
 &  &  & \tabularnewline
\multicolumn{4}{c}{$\xi=\mu\circ\pi_{1}^{-1}=\vartimes_{1}^{\infty}\left\{ 2/3,1/3\right\} $}\tabularnewline
\multicolumn{4}{c}{$Pr\left(\varepsilon_{k}\in L\right)=2/3,\quad Pr\left(\varepsilon_{k}\in R\right)=1/3$}\tabularnewline
\end{tabular}

\caption{\label{fig:spt}The measure $\xi$, or $d\xi\left(x\right)$ as an
infinite product measure.}
\end{figure}

\begin{rem}
\label{rem:mca}There is a Markov chain associated with the transition
probabilities (see \figref{tp}). Note that 
\[
\begin{bmatrix}2/3 & 1/3\end{bmatrix}\begin{bmatrix}1/2 & 1/2\\
1 & 0
\end{bmatrix}=\begin{bmatrix}2/3 & 1/3\end{bmatrix},
\]
so the conditional expectation can be expressed as a Perron-Frobenius
problem with the row vector $\begin{bmatrix}2/3 & 1/3\end{bmatrix}$
as a left Perron-Frobenius vector.

As another example, consider the fractal Eiffel Tower $W_{Ei}$ (see
\figref{fei}). In this case, we have 
\[
M=\begin{bmatrix}2 & 0 & 0\\
0 & 2 & 0\\
0 & 0 & 2
\end{bmatrix},\quad B=\left\{ \begin{bmatrix}0\\
0\\
0
\end{bmatrix},\begin{bmatrix}1\\
0\\
0
\end{bmatrix},\begin{bmatrix}0\\
1\\
0
\end{bmatrix},\begin{bmatrix}0\\
0\\
1
\end{bmatrix}\right\} ,
\]
and $p=\left(1/4,1/4,1/4,1/4\right)$. It follows that each coordinate
of points in $W_{Ei}$ has representation $\sum_{k=1}^{\infty}\varepsilon_{k}2^{-k}$,
where $\left\{ \varepsilon_{k}\right\} $ is i.i.d. with $Pr\left(\varepsilon_{k}=0\right)=3/4$,
and $Pr\left(\varepsilon_{k}=1\right)=1/4$. The transition probabilities
are given by the diagram below. 
\[
\xymatrix{ & \cdot\ar@/_{1pc}/[dl]_{3/4}\ar@/^{1pc}/[dr]^{1/4}\\
0\ar[d]_{1/3}\ar[rrd]^{2/3} &  & 1\ar[d]^{1}\\
1 &  & 0
}
\]
One checks that 
\[
\begin{bmatrix}3/4 & 1/4\end{bmatrix}\begin{bmatrix}2/3 & 1/3\\
1 & 0
\end{bmatrix}=\begin{bmatrix}3/4 & 1/4\end{bmatrix}.
\]
\end{rem}

\begin{figure}
\begin{tabular}{cccc}
\includegraphics[width=0.2\columnwidth]{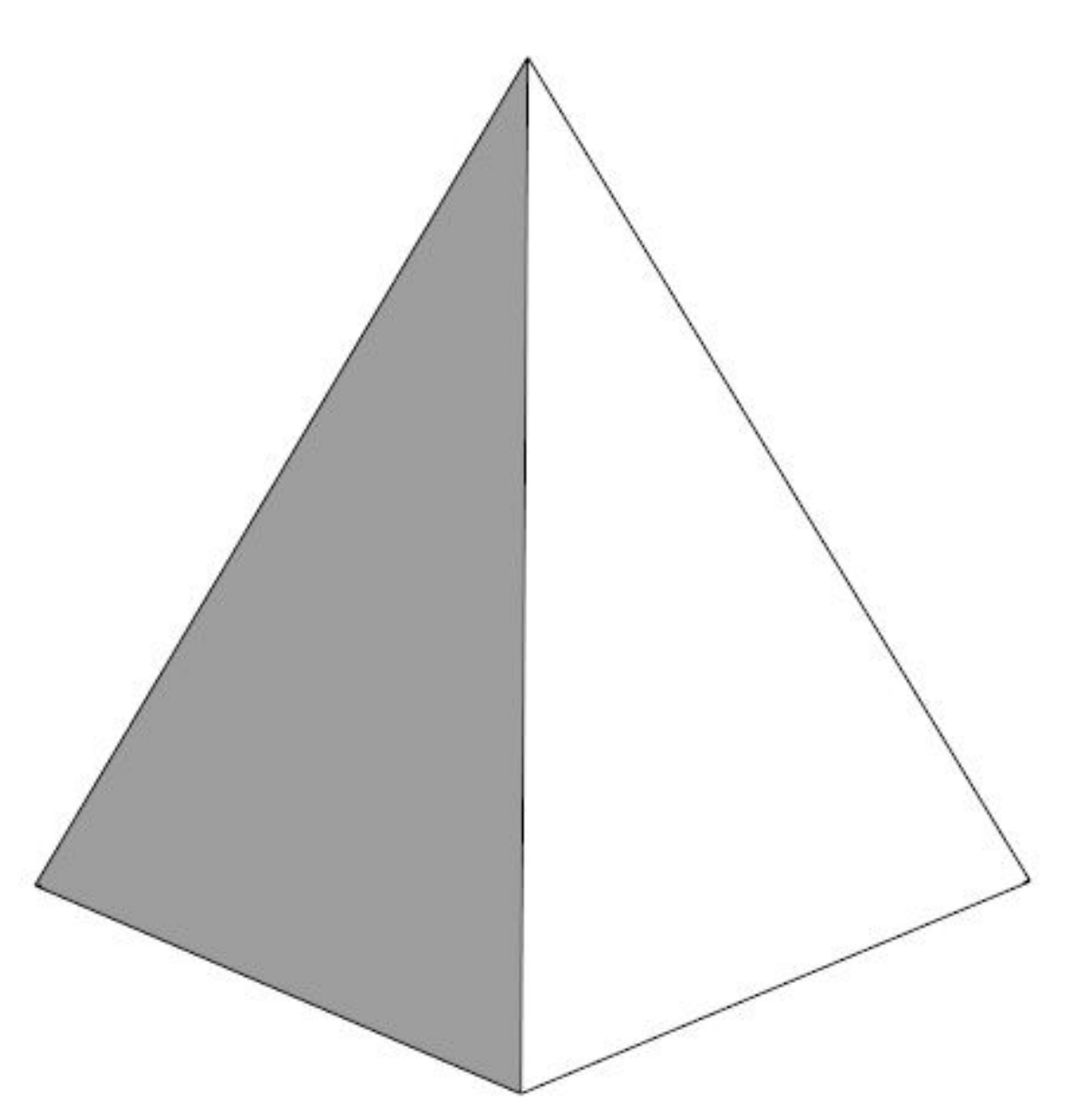} & \includegraphics[width=0.2\columnwidth]{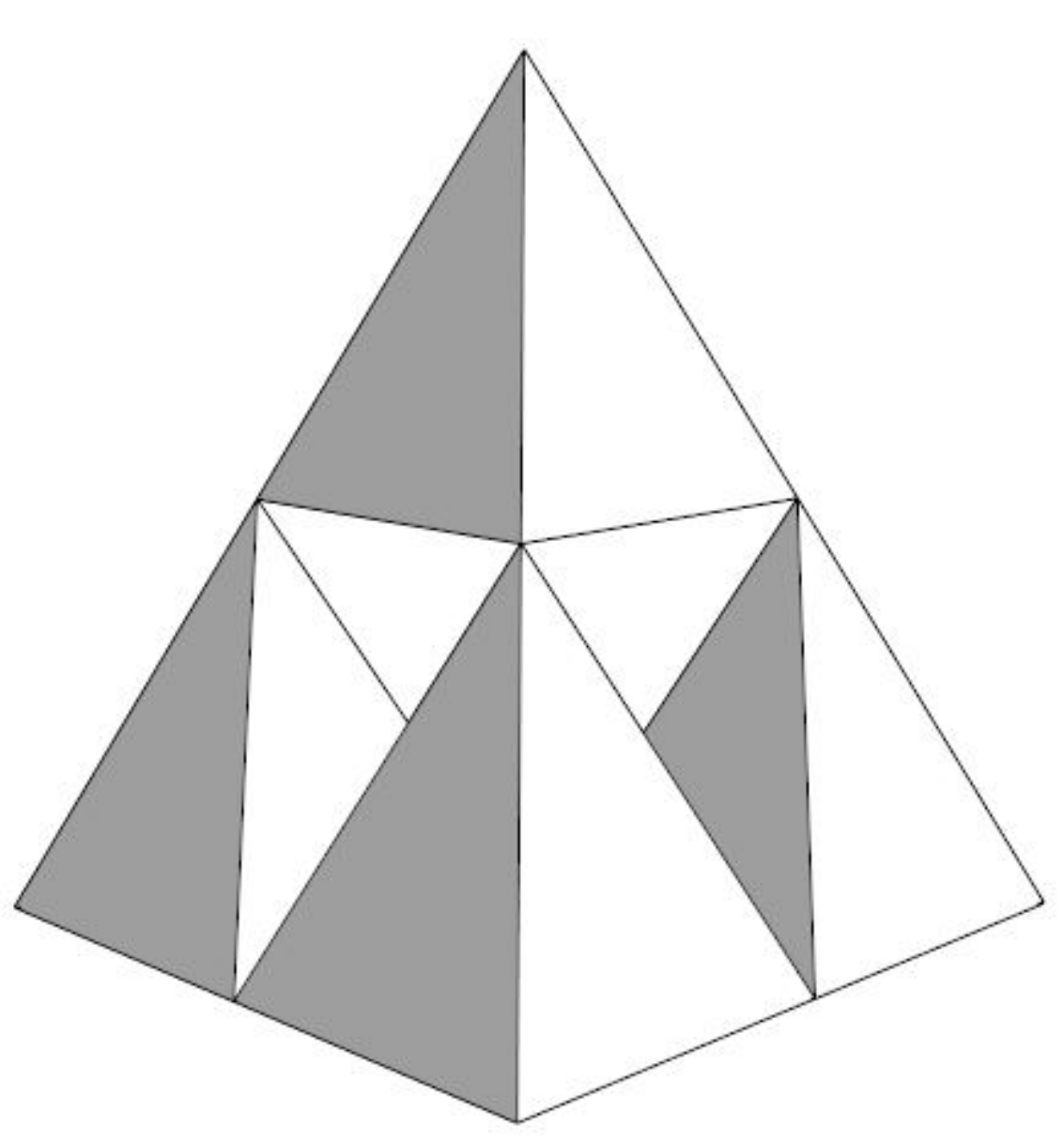} & \includegraphics[width=0.2\columnwidth]{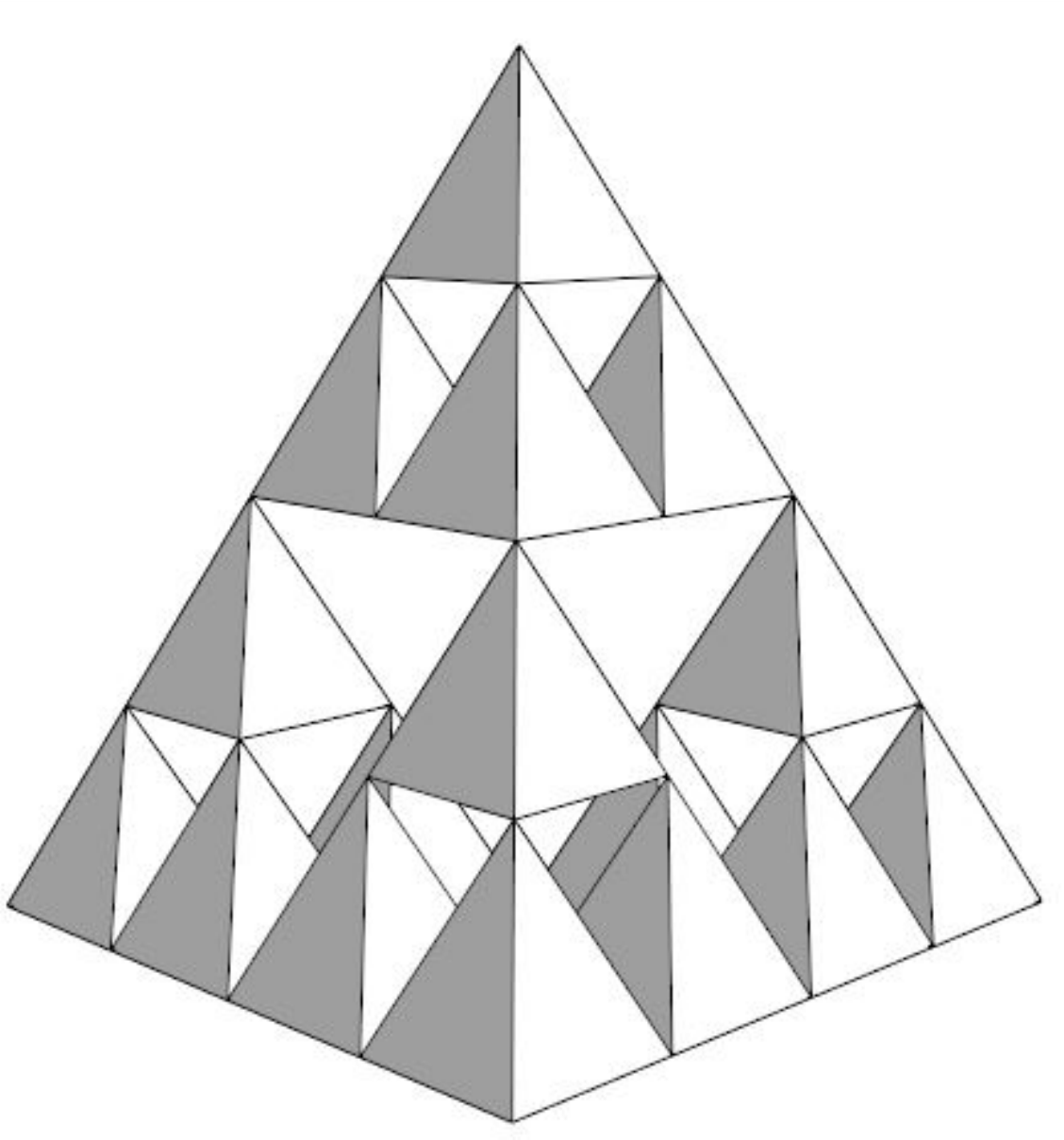} & \includegraphics[width=0.2\columnwidth]{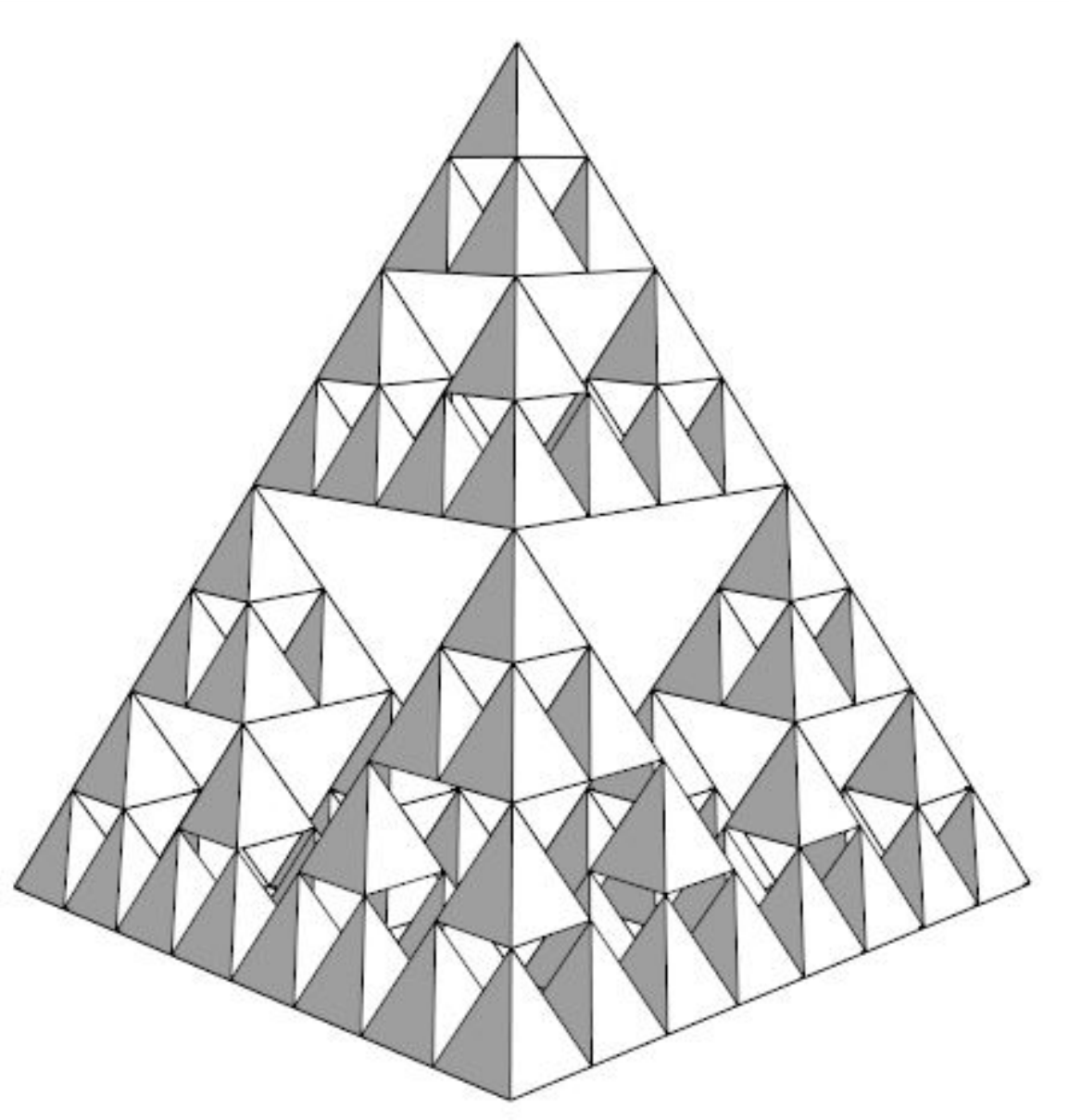}\tabularnewline
step 0 & step 1 & step 2 & step 3\tabularnewline
 &  &  & \tabularnewline
\multicolumn{4}{c}{$\xi=\mu\circ\pi_{1}^{-1}=\vartimes_{1}^{\infty}\left\{ 3/4,1/4\right\} $}\tabularnewline
\multicolumn{4}{c}{$Pr\left(\varepsilon_{k}=0\right)=3/4,\quad Pr\left(\varepsilon_{k}=1\right)=1/4$}\tabularnewline
\end{tabular}

\caption{\label{fig:fei}Construction of the fractal Eiffel Tower.}
\end{figure}

\begin{conjecture}
Given an affine contractive IFS measure $\mu$ supported in $\left[0,1\right]^{d}$,
let $T=\left(T_{ij}\right)$ be the corresponding Markov transition
matrix. Then the following are equivalent:
\begin{enumerate}
\item The Fourier frequencies $\left\{ e_{n}\right\} _{n\in\mathbb{N}_{0}^{d}}$
are total in $L^{2}\left(\mu\right)$.
\item The Perron-Frobenius vector $v$ ($vT=v$, or $\sum_{j}v_{j}T_{ji}=v_{i}$)
is non-constant, i.e., not proportional to $\left(1,1,\cdots,1\right)$.
\end{enumerate}
\end{conjecture}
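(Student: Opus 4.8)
The plan is to reduce the conjecture to a singularity statement about the coordinate-wise digit expansions of $\mu$, and then to read that singularity off from the spectral data of $T$ via Kakutani's theorem. Throughout I would work with the disintegration $d\mu=\int\sigma^{x_{1}}(dy)\,d\xi(x_{1})$ of $\mu$ over the first coordinate, as in \eqref{eq:si4}, with $y=(x_{2},\dots,x_{d})$.

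\emph{Step 1: an iterated slice-singularity criterion.} First I would prove, by induction on $d$, that $\left\{e_{n}\right\}_{n\in\mathbb{N}_{0}^{d}}$ is total in $L^{2}(\mu)$ if and only if $\xi$ is singular \emph{and}, for $\xi$-a.a.\ $x_{1}$, the measure $\sigma^{x_{1}}$ on $[0,1]^{d-1}$ has the analogous property in dimension $d-1$. The ``if'' direction is exactly the computation in the proof of \thmref{SM}, run one coordinate at a time: writing $\left\langle F,e_{n}\right\rangle_{L^{2}(\mu)}=\int_{0}^{1}\overline{e_{n_{1}}(x_{1})}\,H_{m}^{F}(x_{1})\,d\xi(x_{1})$ with $m=(n_{2},\dots,n_{d})$ and $H_{m}^{F}(x_{1})=\int F(x_{1},y)\overline{e_{m}(y)}\,\sigma^{x_{1}}(dy)$, singularity of $\xi$ and \thmref{FM} force $H_{m}^{F}\equiv 0$ $\xi$-a.e.\ for every $m$, and the inductive hypothesis then forces $F=0$ $\mu$-a.e. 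For the converse I would argue contrapositively: if $\xi$ has a nonzero absolutely continuous part, F\&M Riesz produces $h\in L^{2}(\xi)$, $h\neq 0$, with $\int h\,\overline{e_{k}}\,d\xi=0$ for all $k\ge 0$, and one must promote such an $h$ (or an analogous obstruction appearing at a deeper level of the iteration) to a nonzero element of $L^{2}(\mu)$ annihilating every $e_{n}$, $n\in\mathbb{N}_{0}^{d}$.

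\emph{Step 2: identification with the Perron--Frobenius vector.} Using the base-$M$ digit representation of points of $W$ (eq.\ \eqref{eq:F3}), in which the digit \emph{vectors} form an i.i.d.\ sequence across scales with single-level law determined by $\{p_{b}\}_{b\in B}$, the matrix $T$ records the within-level conditional law of $(x_{2},\dots,x_{d})$ given $x_{1}$, and its left fixed vector $v$ ($vT=v$) is precisely the marginal digit law of $x_{1}$; thus $\xi=\vartimes_{1}^{\infty}v$ on the digit space, as in \lemref{sW2}. By Kakutani's theorem (see \corref{kuk}), $\xi$ is singular with respect to Lebesgue $=\vartimes_{1}^{\infty}(\text{uniform})$ exactly when $v$ is not proportional to $(1,\dots,1)$; and conditioning on $x_{1}$ leaves an affine IFS-type measure $\sigma^{x_{1}}$ on $[0,1]^{d-1}$ governed by the corresponding sub-block of $T$, so the induction closes. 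Combining Steps 1 and 2 yields (ii)$\Rightarrow$(i), and the contrapositive of (i)$\Rightarrow$(ii): if $v\propto(1,\dots,1)$ then $\xi$ has a full a.c.\ part (Lebesgue), and the converse half of Step 1 produces a nonzero $F\in L^{2}(\mu)$ orthogonal to $\mathrm{span}\{e_{n}\}$.

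The main obstacle I expect is exactly that converse half of Step 1 — showing that a Lebesgue component in $\xi$ (or in an iterated conditional) genuinely defeats totality in $L^{2}(\mu)$, beyond the trivial product case $\mu=\lambda_{1}\times\nu$. One cannot separate variables, and the obvious guess $F(x)=e^{-2\pi i x_{1}}$ works only if $\widehat{\mu}(m)=0$ for all $m$ with $m_{1}\ge 1$ and $m_{2},\dots,m_{d}\ge 0$, which is not automatic. I would attempt to exploit the self-similarity identity $d\mu=\sum_{b\in B}p_{b}\,d(\mu\circ\tau_{b}^{-1})$, together with the Cauchy-transform factorization behind \thmref{ssiso}, to propagate a single-scale analytic obstruction through all scales, building $F$ as a lacunary-type series adapted to the IFS. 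A secondary technical point is to absorb degenerate doubly-stochastic $T$ (e.g.\ permutation blocks, for which the associated coordinate measure is atomic rather than Lebesgue) into the statement, and to make the $d\mapsto d-1$ reduction uniform in $x_{1}$.
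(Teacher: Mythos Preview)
This statement is presented in the paper as a \emph{conjecture}, not a theorem; the paper offers no proof and leaves it open. So there is nothing to compare your attempt against --- you are trying to settle something the authors did not.

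On the merits of your sketch: the overall strategy (reduce to iterated slice-singularity, then read singularity off from the digit marginals via Kakutani) is the natural one, and your Step~1 ``if'' direction is exactly \thmref{SM} iterated. You have correctly located the main gap, namely the converse half of Step~1: showing that an absolutely continuous component in some marginal actually kills totality of $\{e_n\}_{n\in\mathbb{N}_0^d}$ in $L^2(\mu)$. Your candidate constructions (lacunary series adapted to the IFS, propagating an analytic obstruction through scales) are plausible lines of attack but not yet an argument.

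There is a second, less visible gap in Step~2. You assert that, conditioned on $x_1$, the measure $\sigma^{x_1}$ is again an affine IFS-type measure on $[0,1]^{d-1}$ ``governed by the corresponding sub-block of $T$'', and that the induction closes. But already for the Sierpinski gasket the conditional measures $\sigma^{x_1}(dy)$ are variable-gap Cantor measures depending on the full digit string of $x_1$ (see the discussion before \lemref{sW2}); they are not a single IFS measure with its own fixed transition matrix. So the passage from dimension $d$ to $d-1$ does not obviously produce an object of the same kind, and in particular it is not clear what ``the sub-block of $T$'' means or why its Perron--Frobenius vector inherits non-constancy from $v$. You would need either a more flexible inductive hypothesis (covering a class of measures broader than affine IFS measures) or a direct argument that bypasses the induction on $d$.
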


\begin{rem}[The Sierpinski carpet]
 In the above, we carried out all the detailed computation justifying
our conclusions for the case of the Sierpinski gasket, \figref{2df}
(A). Recall that \figref{2df} (B) represents the Sierpinski carpet,
a close cousin; and the reader will be able to fill in details from
inside the section, spelling out the changes from (A) to (B). In case
(B), naturally, the particular affine transformations (\ref{eq:F1})--(\ref{eq:F2})
are a bit different (i.e., for case (B)), but they are of the same
nature. In particular, it follows that the maximal entropy (IFS) measure
for the Sierpinski carpet is also slice-singular. Moreover, the other
conclusions from Lemmas \ref{lem:sW}, and \ref{lem:sW2}, and \remref{mca},
carry over from case (A) to case (B), \emph{mutatis mutandis}. As
the underlying ideas and methods involved are the same, interested
readers will be able to fill in details.

Moreover the above remarks, regarding extension of the conclusions
for case (A) to that of (B), also apply \emph{mutatis mutandis}, to
the case of \figref{fei}, the fractal Eiffel Tower. There again,
we conclude that the associated maximal entropy (IFS) measure is also
slice-singular.
\end{rem}

\begin{acknowledgement*}
The co-authors thank the following colleagues for helpful and enlightening
discussions: Professors Daniel Alpay, Sergii Bezuglyi, Ilwoo Cho,
Wayne Polyzou, Eric S. Weber, and members in the Math Physics seminar
at The University of Iowa.

\bibliographystyle{amsalpha}
\bibliography{ref}

\providecommand{\bysame}{\leavevmode\hbox to3em{\hrulefill}\thinspace}
\providecommand{\MR}{\relax\ifhmode\unskip\space\fi MR }
\providecommand{\MRhref}[2]{%
  \href{http://www.ams.org/mathscinet-getitem?mr=#1}{#2}
}
\providecommand{\href}[2]{#2}
\begin{thebibliography}{HKLW07}

\bibitem[AJL18]{MR3796644}
Daniel Alpay, Palle Jorgensen, and Izchak Lewkowicz, \emph{{$W$}-{M}arkov
  measures, transfer operators, wavelets and multiresolutions}, Frames and
  harmonic analysis, Contemp. Math., vol. 706, Amer. Math. Soc., Providence,
  RI, 2018, pp.~293--343. \MR{3796644}

\bibitem[Aro50]{MR0051437}
N.~Aronszajn, \emph{Theory of reproducing kernels}, Trans. Amer. Math. Soc.
  \textbf{68} (1950), 337--404. \MR{0051437}

\bibitem[BCKL17]{MR3611473}
Travis Bemrose, Peter~G. Casazza, Victor Kaftal, and Richard~G. Lynch,
  \emph{The unconditional constants for {H}ilbert space frame expansions},
  Linear Algebra Appl. \textbf{521} (2017), 1--18. \MR{3611473}

\bibitem[BHS08]{MR2431670}
Michael~F. Barnsley, John~E. Hutchinson, and \"Orjan Stenflo,
  \emph{{$V$}-variable fractals: fractals with partial self similarity}, Adv.
  Math. \textbf{218} (2008), no.~6, 2051--2088. \MR{2431670}

\bibitem[BS13]{MR3411042}
Anton Baranov and Donald Sarason, \emph{Quotient representations of inner
  functions}, Recent trends in analysis, Theta Ser. Adv. Math., vol.~16, Theta,
  Bucharest, 2013, pp.~35--46. \MR{3411042}

\bibitem[CC18]{MR3847751}
Yuxin Chen and Emmanuel~J. Cand\`es, \emph{The projected power method: an
  efficient algorithm for joint alignment from pairwise differences}, Comm.
  Pure Appl. Math. \textbf{71} (2018), no.~8, 1648--1714. \MR{3847751}

\bibitem[CESV15]{MR3345342}
Emmanuel~J. Cand\`es, Yonina~C. Eldar, Thomas Strohmer, and Vladislav
  Voroninski, \emph{Phase retrieval via matrix completion [reprint of
  {MR}3032952]}, SIAM Rev. \textbf{57} (2015), no.~2, 225--251. \MR{3345342}

\bibitem[CH18]{MR3817340}
Peter~G. Casazza and John~I. Haas, IV, \emph{On {G}rassmannian frames with
  spectral constraints}, Sampl. Theory Signal Image Process. \textbf{17}
  (2018), no.~1, 17--28. \MR{3817340}

\bibitem[Che18]{MR3796634}
Xuemei Chen, \emph{The {K}aczmarz algorithm, row action methods, and
  statistical learning algorithms}, Frames and harmonic analysis, Contemp.
  Math., vol. 706, Amer. Math. Soc., Providence, RI, 2018, pp.~115--127.
  \MR{3796634}

\bibitem[CK04]{MR2066823}
Peter~G. Casazza and Gitta Kutyniok, \emph{Frames of subspaces}, Wavelets,
  frames and operator theory, Contemp. Math., vol. 345, Amer. Math. Soc.,
  Providence, RI, 2004, pp.~87--113. \MR{2066823}

\bibitem[CK08]{MR2440135}
\bysame, \emph{Robustness of fusion frames under erasures of subspaces and of
  local frame vectors}, Radon transforms, geometry, and wavelets, Contemp.
  Math., vol. 464, Amer. Math. Soc., Providence, RI, 2008, pp.~149--160.
  \MR{2440135}

\bibitem[CKL08]{MR2419707}
Peter~G. Casazza, Gitta Kutyniok, and Shidong Li, \emph{Fusion frames and
  distributed processing}, Appl. Comput. Harmon. Anal. \textbf{25} (2008),
  no.~1, 114--132. \MR{2419707}

\bibitem[CT13]{MR3117886}
Wojciech Czaja and James~H. Tanis, \emph{Kaczmarz algorithm and frames}, Int.
  J. Wavelets Multiresolut. Inf. Process. \textbf{11} (2013), no.~5, 1350036,
  13. \MR{3117886}

\bibitem[DJ07]{MR2319756}
Dorin~Ervin Dutkay and Palle E.~T. Jorgensen, \emph{Fourier frequencies in
  affine iterated function systems}, J. Funct. Anal. \textbf{247} (2007),
  no.~1, 110--137. \MR{2319756}

\bibitem[EN11]{MR2835851}
Yonina~C. Eldar and Deanna Needell, \emph{Acceleration of randomized {K}aczmarz
  method via the {J}ohnson-{L}indenstrauss lemma}, Numer. Algorithms
  \textbf{58} (2011), no.~2, 163--177. \MR{2835851}

\bibitem[EO13]{MR2964017}
Martin Ehler and Kasso~A. Okoudjou, \emph{Probabilistic frames: an overview},
  Finite frames, Appl. Numer. Harmon. Anal., Birkh\"{a}user/Springer, New York,
  2013, pp.~415--436. \MR{2964017}

\bibitem[EP01]{MR1898684}
David~J. Evans and Constantin Popa, \emph{Projections and preconditioning for
  inconsistent least-squares problems}, Int. J. Comput. Math. \textbf{78}
  (2001), no.~4, 599--616. \MR{1898684}

\bibitem[FJKO05]{MR2147063}
Matthew Fickus, Brody~D. Johnson, Keri Kornelson, and Kasso~A. Okoudjou,
  \emph{Convolutional frames and the frame potential}, Appl. Comput. Harmon.
  Anal. \textbf{19} (2005), no.~1, 77--91. \MR{2147063}

\bibitem[FL19]{MR3906284}
Hartmut F\"{u}hr and Jakob Lemvig, \emph{System bandwidth and the existence of
  generalized shift-invariant frames}, J. Funct. Anal. \textbf{276} (2019),
  no.~2, 563--601. \MR{3906284}

\bibitem[Fri05]{MR2113344}
Benjamin Friedlander, \emph{A subspace method for space time adaptive
  processing}, IEEE Trans. Signal Process. \textbf{53} (2005), no.~1, 74--82.
  \MR{2113344}

\bibitem[HH19]{MR3894265}
Jie Huang and Ting-Zhu Huang, \emph{A nonstationary accelerating alternating
  direction method for frame-based {P}oissonian image deblurring}, J. Comput.
  Appl. Math. \textbf{352} (2019), 181--193. \MR{3894265}

\bibitem[Hid80]{MR562914}
Takeyuki Hida, \emph{Brownian motion}, Applications of Mathematics, vol.~11,
  Springer-Verlag, New York-Berlin, 1980, Translated from the Japanese by the
  author and T. P. Speed. \MR{562914}

\bibitem[HJW16]{2016arXiv160308852H}
John~E. {Herr}, Palle E.~T. {Jorgensen}, and Eric~S. {Weber}, \emph{{Positive
  Matrices in the Hardy Space with Prescribed Boundary Representations via the
  Kaczmarz Algorithm}}, ArXiv e-prints (2016), arXiv:1603.08852.

\bibitem[HJW18a]{MR3796641}
John~E. Herr, Palle E.~T. Jorgensen, and Eric~S. Weber, \emph{A matrix
  characterization of boundary representations of positive matrices in the
  {H}ardy space}, Frames and harmonic analysis, Contemp. Math., vol. 706, Amer.
  Math. Soc., Providence, RI, 2018, pp.~255--270. \MR{3796641}

\bibitem[HJW18b]{HERR2018}
John~E. Herr, Palle~E.T. Jorgensen, and Eric~S. Weber, \emph{A characterization
  of boundary representations of positive matrices in the hardy space via the
  abel product}, Linear Algebra and its Applications (2018).

\bibitem[HKLW07]{MR2367342}
Deguang Han, Keri Kornelson, David Larson, and Eric Weber, \emph{Frames for
  undergraduates}, Student Mathematical Library, vol.~40, American Mathematical
  Society, Providence, RI, 2007. \MR{2367342}

\bibitem[HLL18]{MR3778680}
Deguang Han, David~R. Larson, and Rui Liu, \emph{Dilations of operator-valued
  measures with bounded {$p$}-variations and framings on {B}anach spaces}, J.
  Funct. Anal. \textbf{274} (2018), no.~5, 1466--1490. \MR{3778680}

\bibitem[HLS15]{MR3423689}
Ryan Hotovy, David~R. Larson, and Sam Scholze, \emph{Binary frames}, Houston J.
  Math. \textbf{41} (2015), no.~3, 875--899. \MR{3423689}

\bibitem[HS05]{MR2140451}
Rainis Haller and Ryszard Szwarc, \emph{Kaczmarz algorithm in {H}ilbert space},
  Studia Math. \textbf{169} (2005), no.~2, 123--132. \MR{2140451}

\bibitem[Hut81]{MR625600}
John~E. Hutchinson, \emph{Fractals and self-similarity}, Indiana Univ. Math. J.
  \textbf{30} (1981), no.~5, 713--747. \MR{625600}

\bibitem[Hut95]{MR1656855}
\bysame, \emph{Fractals: a mathematical framework}, Complex. Int. \textbf{2}
  (1995), 14 HTML documents. \MR{1656855}

\bibitem[IZ13]{MR3159297}
Andrey~Aleksandrovich Ivanov and Aleksandr~Ivanovich Zhdanov, \emph{Kaczmarz
  algorithm for {T}ikhonov regularization problem}, Appl. Math. E-Notes
  \textbf{13} (2013), 270--276. \MR{3159297}

\bibitem[JS07]{MR2362796}
Palle E.~T. Jorgensen and Myung-Sin Song, \emph{Entropy encoding, {H}ilbert
  space, and {K}arhunen-{L}o\`eve transforms}, J. Math. Phys. \textbf{48}
  (2007), no.~10, 103503, 22. \MR{2362796}

\bibitem[JS18a]{MR3800275}
\bysame, \emph{Infinite-dimensional measure spaces and frame analysis}, Acta
  Appl. Math. \textbf{155} (2018), 41--56. \MR{3800275}

\bibitem[JS18b]{MR3882025}
\bysame, \emph{Markov chains and generalized wavelet multiresolutions}, J.
  Anal. \textbf{26} (2018), no.~2, 259--283. \MR{3882025}

\bibitem[JT17]{MR3642406}
Palle Jorgensen and Feng Tian, \emph{Non-commutative analysis}, World
  Scientific Publishing Co. Pte. Ltd., Hackensack, NJ, 2017, With a foreword by
  Wayne Polyzou. \MR{3642406}

\bibitem[KA19]{MR3910931}
M.~M. Khader and M.~Adel, \emph{Introducing the windowed {F}ourier frames
  technique for obtaining the approximate solution of the coupled system of
  differential equations}, J. Pseudo-Differ. Oper. Appl. \textbf{10} (2019),
  no.~1, 241--256. \MR{3910931}

\bibitem[Kac37]{K-1937}
S.~Kaczmarz, \emph{{Angen\"{a}herte Aufl\"{o}sung von Systemen linearer
  Gleichungen}}, Bulletin International de l'Acad\'{e}mie Polonaise des
  Sciences et des Lettres \textbf{35} (1937), 355--357.

\bibitem[Kak43]{MR0014404}
Shizuo Kakutani, \emph{Notes on infinite product measure spaces. {II}}, Proc.
  Imp. Acad. Tokyo \textbf{19} (1943), 184--188. \MR{0014404}

\bibitem[Kak48]{MR0023331}
\bysame, \emph{On equivalence of infinite product measures}, Ann. of Math. (2)
  \textbf{49} (1948), 214--224. \MR{0023331}

\bibitem[KL19]{MR3896128}
Victor Kaftal and David~R. Larson, \emph{Admissible sequences of positive
  operators}, Trans. Amer. Math. Soc. \textbf{371} (2019), no.~5, 3721--3742.
  \MR{3896128}

\bibitem[KM06]{MR2263965}
Stanis\l~aw Kwapie\'{n} and Jan Mycielski, \emph{Erratum to the paper: ``{O}n
  the {K}aczmarz algorithm of approximation in infinite-dimensional spaces''
  [{S}tudia {M}ath. {\bf 148} (2001), no. 1, 75--86; mr1881441]}, Studia Math.
  \textbf{176} (2006), no.~1, 93. \MR{2263965}

\bibitem[Kol83]{MR735967}
A.~N. Kolmogorov, \emph{On logical foundations of probability theory},
  Probability theory and mathematical statistics ({T}bilisi, 1982), Lecture
  Notes in Math., vol. 1021, Springer, Berlin, 1983, pp.~1--5. \MR{735967}

\bibitem[LS15]{MR3424852}
Shuyang Ling and Thomas Strohmer, \emph{Self-calibration and biconvex
  compressive sensing}, Inverse Problems \textbf{31} (2015), no.~11, 115002,
  31. \MR{3424852}

\bibitem[LZ15]{MR3450541}
Junhong Lin and Ding-Xuan Zhou, \emph{Learning theory of randomized {K}aczmarz
  algorithm}, J. Mach. Learn. Res. \textbf{16} (2015), 3341--3365. \MR{3450541}

\bibitem[NSW16]{MR3439812}
Deanna Needell, Nathan Srebro, and Rachel Ward, \emph{Stochastic gradient
  descent, weighted sampling, and the randomized {K}aczmarz algorithm}, Math.
  Program. \textbf{155} (2016), no.~1-2, Ser. A, 549--573. \MR{3439812}

\bibitem[Oko16]{MR3526434}
Kasso~A. Okoudjou, \emph{Preconditioning techniques in frame theory and
  probabilistic frames}, Finite frame theory, Proc. Sympos. Appl. Math.,
  vol.~73, Amer. Math. Soc., Providence, RI, 2016, pp.~105--142. \MR{3526434}

\bibitem[Pop01]{MR2208766}
C.~Popa, \emph{Oblique projections as preconditioner in {K}aczmarz-like
  algorithms}, Proceedings of the {N}inth {S}ymposium of {M}athematics and its
  {A}pplications, Rom. Acad., Timi\c{s}oara, 2001, pp.~118--122. \MR{2208766}

\bibitem[Pop10]{MR2721177}
Constantin Popa, \emph{A hybrid {K}aczmarz-conjugate gradient algorithm for
  image reconstruction}, Math. Comput. Simulation \textbf{80} (2010), no.~12,
  2272--2285. \MR{2721177}

\bibitem[Pop18]{MR3846956}
\bysame, \emph{Convergence rates for {K}aczmarz-type algorithms}, Numer.
  Algorithms \textbf{79} (2018), no.~1, 1--17. \MR{3846956}

\bibitem[RT71]{MR0284067}
David Ruelle and Floris Takens, \emph{On the nature of turbulence}, Comm. Math.
  Phys. \textbf{20} (1971), 167--192. \MR{0284067}

\bibitem[Rue79]{MR534172}
D.~Ruelle, \emph{Analycity properties of the characteristic exponents of random
  matrix products}, Adv. in Math. \textbf{32} (1979), no.~1, 68--80.
  \MR{534172}

\bibitem[Rue82]{MR647807}
David Ruelle, \emph{Characteristic exponents and invariant manifolds in
  {H}ilbert space}, Ann. of Math. (2) \textbf{115} (1982), no.~2, 243--290.
  \MR{647807}

\bibitem[Rue04]{MR2129258}
\bysame, \emph{Thermodynamic formalism}, second ed., Cambridge Mathematical
  Library, Cambridge University Press, Cambridge, 2004, The mathematical
  structures of equilibrium statistical mechanics. \MR{2129258}

\bibitem[Sar94]{MR1289670}
Donald Sarason, \emph{Sub-{H}ardy {H}ilbert spaces in the unit disk},
  University of Arkansas Lecture Notes in the Mathematical Sciences, vol.~10,
  John Wiley \& Sons, Inc., New York, 1994, A Wiley-Interscience Publication.
  \MR{1289670}

\bibitem[SEC14]{MR3210983}
Mahdi Soltanolkotabi, Ehsan Elhamifar, and Emmanuel~J. Cand\`es, \emph{Robust
  subspace clustering}, Ann. Statist. \textbf{42} (2014), no.~2, 669--699.
  \MR{3210983}

\bibitem[SV09]{MR2500924}
Thomas Strohmer and Roman Vershynin, \emph{A randomized {K}aczmarz algorithm
  with exponential convergence}, J. Fourier Anal. Appl. \textbf{15} (2009),
  no.~2, 262--278. \MR{2500924}

\bibitem[Szw07]{MR2311862}
Ryszard Szwarc, \emph{Kaczmarz algorithm in {H}ilbert space and tight frames},
  Appl. Comput. Harmon. Anal. \textbf{22} (2007), no.~3, 382--385. \MR{2311862}

\bibitem[WO17]{MR3688637}
Clare Wickman and Kasso Okoudjou, \emph{Duality and geodesics for probabilistic
  frames}, Linear Algebra Appl. \textbf{532} (2017), 198--221. \MR{3688637}

\bibitem[Zha19]{MR3896982}
Jian-Jun Zhang, \emph{A new greedy {K}aczmarz algorithm for the solution of
  very large linear systems}, Appl. Math. Lett. \textbf{91} (2019), 207--212.
  \MR{3896982}

\end{thebibliography}
\end{acknowledgement*}

\end{document}